\newtheorem{theorem}{Theorem}[section]
\newtheorem{lemma}[theorem]{Lemma}
\newtheorem{proposition}[theorem]{Proposition}
\newtheorem{corollary}[theorem]{Corollary}
\newtheorem{conjecture}[theorem]{Conjecture}
\theoremstyle{definition}
\newtheorem{definition}[theorem]{Definition}
\newtheorem{remark}[theorem]{Remark}
\numberwithin{equation}{section}
\newtheorem{example}[theorem]{Example}
\newtheorem{assumption}[theorem]{Assumption}
\newtheorem{setting}[theorem]{Setting}
\begin{document}

\normalfont

\title{Hodge-Iwasawa Theory I}
\author{Xin Tong}

\maketitle

\begin{abstract}
\rm In this paper, we are going to establish a simultaneous generalization of the relative Iwasawa theory proposed by Kedlaya-Pottharst and the relative $p$-adic Hodge theory after Kedlaya-Liu. We call this Hodge-Iwasawa theory in the sense that one could apply the theory to study noncommutative Iwasawa cohomology and noncommutative Iwasawa theories in families and meanwhile one could apply the theory to study the deformation theory of \'etale local systems or families of representations of fundamental groups or the equivariant constructible $p$-adic sheaves, with more sophisticated point of view coming from Kato, Fukaya-Kato. We follow closely the approach of Kedlaya-Liu to study the corresponding modules and sheaves over the corresponding deformed version of the period rings and period sheaves.

\end{abstract}

\newpage

\tableofcontents

\newpage

\section{Introduction}

\subsection{Hodge-Iwasawa Theory and Higher Dimensional Modeling of the Weil Conjectures}

This paper deals with some simultaneous generalization of the noncommutative Iwasawa theory after Kedlaya-Pottharst and the relative $p$-adic Hodge theory in the style of Kedlaya-Liu, with further sophisticated philosophy in mind dated back to Fukaya-Kato and Kato. To make the motivations and the applications more serious and clear to the readers, let us start from recalling what happens along the intersection of the two pictures above.

\begin{example} \mbox{\textbf{(After Kedlaya-Pottharst)}}
Let $K$	be a finite extension of $\mathbb{Q}_p$. In \cite{KP1}, the authors proposed that one could consider a family $\{L\}_L$ parametrized by perfectoid Galois fields of intrinsic descriptions of the category $\widetilde{\Phi\Gamma}_{K,A}$ (with the notations in \cite{KP1}) in terms of $\varphi$-modules over $\widetilde{\Pi}_{L,A}$ in the family with the further action of the groups $G(L/K)$ for each $L$ in the family. These actually for a fixed $L/K$  will give rise to some perfectoid $p$-adic Hodge theory immediately. Then they considered some geometrization which is just some application of the theory in \cite[Chapter 9]{KL15}, namely one first forgets the action of the group $G_{L/K}$ but replaces this by pro-\'etale topology over $\mathrm{Spa}(K,\mathcal{O}_K)$. Then one defines an object $\mathcal{M}$ in the category $\widetilde{\Phi\Gamma}_{K,A}$ which has no a priori relation to any tower $L/K$. One recovers the Iwasawa theory from the sheaf attached to the Iwasawa algebra attached to the group $G(L/K)$. This makes more sense if we know clearly the structure of $G(L/K)$. The memorization of the Iwasawa structure back from the Iwasawa deformation (instead of taking the limit throughout some rigidized towers) will be some key observation in our development. 
\end{example}

\indent On the other hand, we have the following example:

\begin{example}
Let $A$ be now an Artin algebra over $\mathbb{Q}_p$, in this situation one could actually consider the local systems $\underline{A}$ over some specific smooth proper rigid analytic space $X/\mathbb{Q}_p$. Therefore one could consider the situation where this deforms from some $\mathbb{Q}_p$-local systems. Then one should be able to relate constructible $\underline{A}$-local systems to the corresponding deformation of the corresponding $\mathbb{Q}_p$ representation of the corresponding \'etale fundamental groups. One could then as in \cite{KL15} and \cite{KL16} consider the corresponding Galois groups of affinoids in both rigid geometry, adic geometry or perfectoid geometry. Then in these situations, one could consider the corresponding deformation problem. Furthermore one could then consider more general rigid affinoids $A/\mathbb{Q}_p$ and consider the corresponding families of representations of the Galois groups mentioned above over $A$. In order to study these one could then construct the corresponding $A$-relative relative period rings perfect generalizing \cite{KL15} and \cite{KL16}. 	
\end{example}

\indent Therefore now we consider the combination of the two pictures and the corresponding simultaneous generalization of the pictures in the relative $p$-adic Hodge theory and the Iwasawa theory in the sense mentioned above. Recall that the toric chart in \cite{KL16} could admit some cyclotomic deformation with the corresponding $\Gamma$-action where $\Gamma$ will be just $\mathbb{Z}_p^{d}$ for some rank $d$. Then one could consider the corresponding picture in the Iwasawa theory to consider the corresponding $(\varphi,\Gamma)$-cohomology of the corresponding multivariate cyclotomic deformations. We will call this generalized Iwasawa cohomology and the generalized Iwasawa theory, since over the perfectoid covering of the base space over $\widehat{\mathbb{Q}_p(p^{1/p^{\infty}})}$, one has the corresponding relative de Rham period rings. Therefore one could consider the generalized Bloch-Kato dual exponential maps and so on.\\

\indent One should somehow regard that Iwasawa theory is mimicking the celebrated Weil conjectures, namely the Riemann Hypothesis for the arithmetic of the algebraic curves. The conjectures themselves actually do not have too much sort of motivic motivation a priori, for instance one could just ask the questions on the counting problems within the arithmetic of algebraic curves. However the conjectures to some extent are definitely admitting deep and fundamental motivic point of views. Since the issue is that one could define the corresponding $L$-functions in terms of some Hodge structure for algebraic varieties over finite fields, namely the Frobenius actions on the $\ell$-adic \'etale cohomologies. Also one could actually directly reinterpret the corresponding number theoretic properties in term of the corresponding cohomological properties, for instance the zeros of zeta functions and the functional equations, which is further adopted by Perrin-Riou in her celebrated work for instance. Iwasawa established the corresponding point of view successfully in the context of number fields, which is actually to some extent very fascinating and deep. Although the original pictures from Iwasawa admit more class group formulation and the corresponding motivation (namely more Galois theoretic or more \'etale), which is similar to the original motivation of Weil conjectures, but following later philosophy like from Kato everything could be formulated quite cohomologically, namely due to some deep relationship and similarization between the \'etale cohomology and Galois cohomology.\\

\[
\xymatrix@R+4.5pc@C+0pc{
\text{Weil Conjectures for Curves over $k/\mathbb{F}_p$}
\ar[r]\ar[r]\ar[r]\ar[d]\ar[d]\ar[d] &\text{Deligne-Kedlaya-Caro-Abe Weil II for Varieties} \ar[d]\ar[d]\ar[d]\\
\text{\'Etale Iwasawa Theory for $\mathbb{Q}$}
\ar[u]\ar[u]\ar[u]\ar[r]\ar[r]\ar[r]\ar[d]\ar[d]\ar[d] &\text{??? for Varieties over $\mathbb{Q}$} \ar[u]\ar[u]\ar[u]\ar[d]\ar[d]\ar[d].\\
\text{Non-\'etale Iwasawa Theory}
\ar[r]\ar[r]\ar[r] &\text{??? for Varieties over $\mathbb{Q}_p$}.
}
\]
\\

\indent As shown in the diagram above, one could see that actually Weil conjectures are quite general, in the sense that first they are actually above varieties over finite fields, which was originally proved by Deligne in terms of \'etale cohomology. By $p$-adic method, Kedlaya reproved the conjectures with actually more general point of view, since there one considers general smooth coefficients which is to say the isocrystals. The key issue is to establish some sort of finiteness of the rigid cohomology with such smooth non-\'etale coefficients. The method adopted by Kedlaya is through deep $p$-adic local monodromy theorem established in \cite{Ked1} which proves the affine curve cases directly, then through clever geometric method to reduce everything to affine spaces, and then to affine curves with induction. Actually the picture can be made more general in the sense that one could consider more general $p$-adic constructible coefficients, which was considered by Abe and etc, for instance the arithmetic $D$-modules.\\

\indent In the mixed-characteristic situation, Nakamura established some general Iwasawa theory for non-\'etale coefficients (by using Kedlaya-Liu's language these are pro-\'etale non-\'etale Hodge-Frobenius sheaves) over $X=\mathrm{Spa}(\mathbb{Q}_p,\mathcal{O}_{\mathbb{Q}_p})$. Through Kedlaya-Pottharst's point of view, namely the analytic cohomology of the cyclotomic deformation of pro-\'etale Hodge structures. We will give our understanding on the possible $???$ in the entries of the graph above, which is already represented at the beginning of this subsection. Replacing the cyclotomic towers or toric towers by pro-\'etale sites allows us to define the generalized Iwasawa cohomology. Therefore deformation of the Hodge sheaves over pro-\'etale sites will remember the Iwasawa theoretic consideration, which is observed as mentioned above by Kedlaya-Pottharst. \\


\indent  The first main topic exhibited in our main body of work is on the deformation over a reduced affinoid algebra $A$ the corresponding categories of vector bundles and the corresponding Frobenius modules (bundles) over the ring $\widetilde{\Pi}^\infty_{R,A}$ and $\widetilde{\Pi}_{R,A}$ (with the notations in \cref{corollary7.6} as below):

\begin{theorem}
The categories of $A$-relative quasicoherent finite locally free sheaves over the schematic relative Fargues-Fontaine curve $\mathrm{Proj}P_{R,A}$, the Frobenius modules over $\widetilde{\Pi}^\infty_{R,A}$, the Frobenius modules over $\widetilde{\Pi}_{R,A}$ and the Frobenius bundles over $\widetilde{\Pi}_{R,A}$ are equivalent. Moreover for those Fr\'echet-Stein distribution algebra $A_\infty(G)$ attached to a $p$-adic Lie group $G$ in the style of the Hodge-Iwasawa consideration assumed in \cref{setting3.8}, the corresponding statement holds as well (see \cref{section3.2}).\\
\end{theorem}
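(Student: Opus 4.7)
The plan is to follow closely the Kedlaya--Liu strategy of \cite{KL15, KL16} for the absolute case, adapted to the $A$-relative setting; the main new ingredient is that all completed tensor products with $A$ continue to behave like well-controlled Fr\'echet algebras over $A$, which is ensured by $A$ being a reduced affinoid. The four categories will be linked by three comparison functors that one constructs one at a time.

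First, for the equivalence between Frobenius modules over $\widetilde{\Pi}^\infty_{R,A}$ and Frobenius modules over $\widetilde{\Pi}_{R,A}$, the functor is base change along the flat Frobenius-equivariant inclusion $\widetilde{\Pi}^\infty_{R,A}\hookrightarrow \widetilde{\Pi}_{R,A}$. Essential surjectivity is supplied by the Frobenius descent of Kedlaya--Liu: given a Frobenius module $M$ on the right, one constructs a Frobenius-stable $\widetilde{\Pi}^\infty_{R,A}$-lattice by intersecting iterated $\varphi^n$-translates of an arbitrary generating set, the relevant contraction being uniform because $A$ is uniform affinoid. Fully faithfulness reduces to an isomorphism of Frobenius invariants under base change, which is standard.

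Second, for Frobenius modules versus Frobenius bundles over $\widetilde{\Pi}_{R,A}$, the functor is global sections; the inverse is sheafification along the natural covering by closed annuli. The essential input is a $2$-categorical glueing statement
\[
\mathrm{Vec}(\widetilde{\Pi}_{R,A}) \;\simeq\; 2\text{-}\varprojlim_{I} \mathrm{Vec}(\widetilde{\Pi}^{I}_{R,A}),
\]
indexed by closed subintervals $I\subset(0,\infty)$, which one obtains by base-changing the Kedlaya--Liu Kiehl theorem for $R$ along $A$; the Frobenius equivariance is then what forces the compatible local sections to descend to a finitely generated global module. Third, I would link Frobenius modules over $\widetilde{\Pi}^\infty_{R,A}$ with finite locally free sheaves over $\mathrm{Proj}\,P_{R,A}$ by recognizing $P_{R,A}=\bigoplus_n(\widetilde{\Pi}^\infty_{R,A})^{\varphi=p^n}$ as a deformed Fargues--Fontaine graded ring: the functor from graded $P_{R,A}$-modules to Frobenius modules is the standard localize-and-take-degree-zero functor, with inverse $M\mapsto\bigoplus_n M^{\varphi=p^n}$, and the equivalence follows from a Beauville--Laszlo presentation obtained by inverting two distinct primitive elements of positive degree.

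Finally, for a Fr\'echet--Stein distribution algebra $A_\infty(G)=\varprojlim_n A_n(G)$ under the hypotheses of \cref{setting3.8}, I would upgrade the preceding equivalences by taking the $2$-limit over the affinoid layers. Because the transition maps $A_{n+1}(G)\to A_n(G)$ are flat with dense image, a coadmissible Frobenius module or bundle over the $\widetilde{\Pi}$-rings with $A_\infty(G)$-coefficients is the same as a compatible system of Frobenius objects over each $A_n(G)$, and the equivalences established for affinoid $A$ extend termwise. The main obstacle I anticipate is the glueing step: verifying that the Kiehl-type theorem for the annulus strata holds uniformly enough in the $A$-relative setting so that Frobenius descent produces genuine finite projectivity; once this analytic core is secure, the remaining steps are essentially formal consequences.
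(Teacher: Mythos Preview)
Your high-level plan—adapt \cite{KL15} to the $A$-relative setting, then take a $2$-limit over affinoid layers for $A_\infty(G)$—is correct, and your last paragraph matches the paper. But your decomposition of the first statement into three independent equivalences does not match the actual Kedlaya--Liu strategy, and the specific arguments you sketch have a gap. The paper (following \cite[Theorem 6.3.12]{KL15}) instead constructs easy forward functors $\mathrm{I}\to\mathrm{II}\to\mathrm{III}\to\mathrm{IV}$ and closes the cycle with one hard functor $\mathrm{IV}\to\mathrm{I}$: for a $\varphi^a$-bundle $M$, the $A$-relative versions of \cite[Prop.~6.2.2, 6.2.4]{KL15} show that $M(n)$ is generated by finitely many $\varphi^a$-invariant global sections for $n\gg 0$, whence each $M_f=M[1/f]^{\varphi^a}$ is finite projective over $P_A[1/f]_0$ and these glue to a vector bundle on $\mathrm{Proj}\,P_{R,A}$. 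The analytic heart is the explicit iterative construction of those invariant sections via a convergent series in $\widetilde{\Pi}^{[s,r]}_{R,A}$.

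Your proposed $\mathrm{II}\leftrightarrow\mathrm{III}$ via ``intersecting iterated $\varphi^n$-translates to get a $\widetilde{\Pi}^\infty_{R,A}$-lattice'' is where I do not believe the argument: such an intersection has no a priori reason to be finite projective over the Fr\'echet algebra $\widetilde{\Pi}^\infty_{R,A}$ without precisely the invariant-section input above, and this is not how Kedlaya--Liu prove it either. Similarly, $\mathrm{III}\leftrightarrow\mathrm{IV}$ is not a Kiehl-type glueing: absent Frobenius, a bundle over the full punctured disc $(0,\infty)$ need not have finitely generated global sections at all; it is the Frobenius that spreads a local basis across all radii, via the approximation argument of \cite[Lemma 6.1.4]{KL15}. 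And the inverse functor $M\mapsto\bigoplus_n M^{\varphi=p^n}$ in your $\mathrm{I}\leftrightarrow\mathrm{II}$ step again requires the same generation result to know these graded pieces are large enough. Once you replace your three separate mechanisms by the single engine ``enough Frobenius invariants after twisting,'' the proof goes through as in the paper.
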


\indent This is actually considered in \cite{KP1} in the situation where $R$ is a perfectoid field. The main goal in our mind at this moment is essentially some belief (encoded in some possible application) that the deformations (both from some representation reason or Iwasawa-Tamagawa reason) share some rigid equivalence among the vector bundles over relative Fargues-Fontaine curves, $B$-pairs in families and finally the Frobenius-Hodge modules in relative sense. Of course if one focuses on the corresponding Galois representation context, then this belief will be somehow complicated while we believe the categories mentioned here are big enough to eliminate the difference from the deformations and the original absolute situation, without touching the \'etaleness. There are many contexts where such deformation on the level of $\varphi$-modules has been already proved to be important and convenient, such as \cite{Nak4} and \cite{KPX}. \cite{KPX} considered essentially the deformation theory of $\varphi$-modules by using the tools from Fr\'echet algebras. Note that $B$-pairs are generalizations of Galois representations from the point of view of the coefficients, Frobenius modules over ind-Fr\'echet sheaves are generalizations of Galois representations from the point of view of the slope theory and the corresponding purity. The relative deformation of the Hodge structures in such generality looks challenging, for instance the parallel story to Deligne-Laumon, \cite{AM}, \cite{G1} and \cite{G2} looks quite mysterious.\\

\indent We actually discuss in this paper the corresponding pseudocoherent objects as well in the sense of \cite{SGA6} and the corresponding pseudocoherent Frobenius modules studied in \cite{KL16}:

\begin{theorem}
The category of the pseudocoherent sheaves over the schematic Fargues-Fontaine curve $\mathrm{Proj}P_{R,A}$ and the category of the pseudocoherent modules over $\widetilde{\Pi}_{R,A}$ endowed with isomorphisms coming from the Frobenius pullbacks are equivalent.	
\end{theorem}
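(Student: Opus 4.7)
The plan is to deduce the pseudocoherent equivalence from the finite locally free case already established in the previous theorem, via the standard Kedlaya--Liu \cite{KL16} strategy of dévissage against finite presentations, suitably adapted to the $A$-deformed setting.

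First I would assemble the two candidate functors. In one direction, a Frobenius pseudocoherent module $M$ over $\widetilde{\Pi}_{R,A}$ is sent to the graded $P_{R,A}$-module $\bigoplus_{n \ge 0} M^{\varphi = p^n}$ and then to the associated sheaf on $\mathrm{Proj}\,P_{R,A}$. In the opposite direction, a pseudocoherent sheaf $\mathcal{F}$ is reconstructed as a Frobenius module over $\widetilde{\Pi}_{R,A}$ by a Beauville--Laszlo-type gluing of sections on the complement of a closed point of the curve with the formal completion at that point; in the relative setting this uses the $A$-relative version of the fundamental short exact sequence from \cite{KL15}.

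Second I would verify compatibility with pseudocoherence. A pseudocoherent sheaf on $\mathrm{Proj}\,P_{R,A}$ admits, after twisting into sufficiently positive degree, a finite free presentation of any prescribed length. Pulling such a presentation through either functor and invoking the vector bundle equivalence on each finite-rank piece produces a finite presentation of the same length on the other side, so both functors preserve pseudocoherence. Full faithfulness then follows from the vanishing of the appropriate cohomology of internal Homs, which is reduced to the locally free case by the same dévissage together with the cohomological vanishing on $\mathrm{Proj}\,P_{R,A}$ after a positive twist.

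The essential surjectivity is where the main obstacle lies. Producing a Frobenius pseudocoherent module over $\widetilde{\Pi}_{R,A}$ from a given pseudocoherent sheaf requires that the Beauville--Laszlo reconstruction interact well with infinite presentations: one must verify that the relevant $\mathrm{Tor}$ and $\varprojlim{}^1$ terms for the $A$-deformation vanish along the flat base changes connecting $\widetilde{\Pi}^{\infty}_{R,A}$, $\widetilde{\Pi}_{R,A}$, and the affine opens of $\mathrm{Proj}\,P_{R,A}$. This is a genuine subtlety beyond the vector bundle case because, unlike finite locally free objects, pseudocoherent modules need not be dualizable, so one cannot always reduce the limit argument to a finite one. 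I expect this step to be most delicate in the Fréchet--Stein distribution algebra generality $A_{\infty}(G)$ permitted by \cref{setting3.8}, where one additionally has to commute the reconstruction with the Fréchet--Stein limit; the resolution should come from combining the exactness of pseudocoherent tensor products with the flatness properties of the transition maps in the Fréchet--Stein presentation.
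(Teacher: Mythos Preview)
Your high-level plan---reduce to the finite locally free case by d\'evissage along short exact sequences---matches the paper's strategy, but your implementation diverges in two places, and the second is where your proposal remains incomplete.

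First, the functor from sheaves to Frobenius modules. The paper does not use a Beauville--Laszlo reconstruction at a closed point. It simply pulls back along the morphism from the scheme attached to $\widetilde{\Pi}^\infty_{R,A}$ to $\mathrm{Proj}P_{R,A}$ (the $A$-relative analogue of \cite[Lemma 6.3.7]{KL15}) and then base-changes to $\widetilde{\Pi}_{R,A}$; the quasi-inverse is the Frobenius-invariants construction $M \mapsto M_f$, glued over varying $f$. This sidesteps entirely the $\mathrm{Tor}$ and $\varprojlim{}^1$ issues you raise.

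Second, and more substantively, the heart of the argument is not essential surjectivity as you frame it but rather \emph{exactness} of that pullback, which a priori is only right exact. The paper's device is concrete: given a pseudocoherent sheaf $V$, take $0 \to V_1 \to V_2 \to V \to 0$ with $V_2$ a vector bundle; pull back to obtain $W_1 \to W_2 \to W \to 0$ over $\widetilde{\Pi}_{R,A}$ and write $*$ for the failure of left-exactness. The key input is the preceding proposition (the $A$-relative \cite[Corollary 4.6.10]{KL16}), which says that for finitely generated Frobenius modules the operation $M \mapsto M_f = \bigcup_n f^{-n} M(\ell n)^{\varphi^a}$ is exact. Applying $(-)_f$ produces a four-term exact sequence $0 \to *_f \to W_{1,f} \to W_{2,f} \to W_f \to 0$ which sits in a commutative ladder over the sequence of sections of the $V_i$ on the affine open where $f$ is inverted. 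The middle vertical arrow is an isomorphism by the already-proved vector bundle comparison; a five-lemma chase, run twice (once for $V$ and once again with $V_1$ in place of $V$), forces all vertical maps to be isomorphisms and hence $*_f = 0$, so $* = 0$. Once exactness is in hand, pseudocoherence is preserved and the two compositions are checked to be the identity by reduction to the finite projective case. This localisation-and-five-lemma step is the missing ingredient in your sketch; it replaces, rather than requires, the $\mathrm{Tor}$/$\varprojlim{}^1$ analysis you anticipated.

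Finally, the Fr\'echet--Stein case $A_\infty(G)$ is not part of this statement---the theorem is asserted only for a reduced affinoid $A$---so that closing paragraph is not needed here.
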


\indent One of our current \textit{main conjectures} (in the \'etale setting) in this article is quite general over a rigid analytic space $X$ (over $\mathbb{Q}_p$) separated and of finite type (in the category of adic spaces) with respect to an adic ring $T$ as in \cite{Wit1}, \cite{Wit2} and \cite[1.4]{Wit3}, and we consider the corresponding categories $\mathbb{D}_\mathrm{perf}(X_\sharp,T)~ (\sharp=\text{\'et},\text{pro\'et})$ of compatible families $(\mathcal{F}^\bullet_I)_{I\subset T}$ complexes perfect and dg-flat of $T/I$ constructible flat \'etale or pro-\'etale sheaves and we conjecture that we have the corresponding Waldhausen enrichment. To be more precise:

\begin{conjecture}
Assume now that $p$ is a unit in the ring $T$. Assume that we have $\mathbb{D}_\mathrm{perf}(X_\sharp,T)~ (\sharp=\text{\'et},\text{pro\'et})$ are Waldhausen categories and we have the corresponding continuous maps from the attached $K$-theory topological spaces to the one attached to $\mathbb{D}_\mathrm{perf}(T)$ induced by the direct image functor (which will make the space $X$ more restrictive). Then we have that the maps are null-homotopic in some canonical way. (See \cref{conjecture9.13})
\end{conjecture}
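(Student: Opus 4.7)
The plan is to obtain the asserted null-homotopy by combining a Waldhausen-functorial resolution of constructible $T$-sheaves with a contracting homotopy provided by the rigid-analytic structure of $X$ once $p$ is inverted in $T$.

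First, I would reduce to pointwise statements for each open ideal $I \subset T$. By the compatible-family definition of $\mathbb{D}_\mathrm{perf}(X_\sharp, T)$ in terms of the $(\mathcal{F}^\bullet_I)_{I \subset T}$, the $K$-theory space of the ambient category fits into a Milnor-type tower whose layers are the spaces $K(\mathbb{D}_\mathrm{perf}(X_\sharp, T/I))$, and a canonical system of null-homotopies on these layers, compatible with the transition maps, will assemble into a null-homotopy on the top. The hypothesis that $X$ is separated and of finite type, together with whatever stronger finiteness the conjecture concedes in its parenthetical remark about $X$ being ``more restrictive'', should ensure that the relevant $\lim^{1}$ terms are Mittag-Leffler trivial, so that the assembly step is unobstructed.

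Second, for each $T/I$ with $p$ a unit, I would construct an explicit null-homotopy as follows. The hypothesis $p \in T^{\times}$ means that every constructible $T/I$-sheaf is, \'etale- or pro-\'etale-locally on $X$, a shifted finite direct sum of copies of $T/I$; combined with Artin-type vanishing of the cohomological dimension of $X_\sharp$ with coefficients in finite modules (which is exactly where the restriction on $X$ is used), the derived global sections admit a functorial finite cosimplicial resolution by constant free $T/I$-modules. In $\mathbb{D}_\mathrm{perf}(T/I)$ such cosimplicial complexes lie in a Waldhausen subcategory whose $K$-theory is contractible by an Eilenberg swindle, and the swindle can be performed on the nose at the level of the Waldhausen $S_\bullet$-construction, producing a functorial null-homotopy of the direct-image functor. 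This is the $p$-adic analogue, via inversion of $p$ in $T$, of the Witte-type arguments in \cite{Wit1}, \cite{Wit2}, \cite{Wit3}, and should be the mechanism behind the canonical clause of the conjecture.

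The main obstacle will lie in two places. The first is verifying that direct image preserves perfect compatible families under the intended restrictions on $X$; this is analogous to the Nakamura-type finiteness results alluded to earlier in the introduction, and should be handled by combining cohomological-dimension bounds with properness-style hypotheses on $X$. The second, and more substantive, is producing a null-homotopy that is canonical rather than merely existent. The space of admissible hypercovers trivializing the constructible sheaves is non-empty but a priori carries non-trivial higher homotopy, and one must show that this parameter space is contractible within a suitable $\infty$-category of Waldhausen exact endofunctors. I expect this to follow from a filtered-colimit argument over all hypercovers combined with the Waldhausen additivity theorem, but the combinatorics of making the null-homotopy strictly functorial in both $X$ and $T$ is delicate, and will likely require a determinant-functor-style lift of the swindle in the spirit of Fukaya-Kato.
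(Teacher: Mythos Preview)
The statement you are attempting to prove is a \emph{conjecture} in the paper; the paper does not give a proof of it in the stated generality. What the paper does supply is a proof of the special case $T=\mathbb{Z}_\ell$ with $\ell\neq p$ (and, conjecturally, the $p$-adic analogue under further hypotheses), and the mechanism there is entirely different from yours: one pulls back along $\mathrm{Spa}(\mathbb{Q}_p^{\mathrm{ur}},\mathfrak{o}_{\mathbb{Q}_p^{\mathrm{ur}}})\to\mathrm{Spa}(\mathbb{Q}_p,\mathfrak{o}_{\mathbb{Q}_p})$, forms the cofibration sequence
\[
R\Gamma(X_{\mathbb{Q}_p,\sharp},M^\bullet)\to R\Gamma(X_{\mathbb{Q}_p^{\mathrm{ur}},\sharp},M^\bullet)\xrightarrow{\,1-\mathrm{Fr}\,} R\Gamma(X_{\mathbb{Q}_p^{\mathrm{ur}},\sharp},M^\bullet),
\]
and applies Waldhausen's additivity theorem to conclude that $\mathbb{K}R\Gamma(X_\sharp,-)$ is null-homotopic. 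The geometric input is finiteness and cohomological dimension over $\mathbb{Q}_p^{\mathrm{ur}}$; no hypercover combinatorics or swindle is invoked.

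Your proposal has a genuine gap at the step where you claim the image of the derived global-section functor ``lies in a Waldhausen subcategory whose $K$-theory is contractible by an Eilenberg swindle''. An Eilenberg swindle requires countably infinite direct sums, which are not available inside $\mathbb{D}_{\mathrm{perf}}(T/I)$; indeed $K(\mathbb{D}_{\mathrm{perf}}(T/I))$ is the $K$-theory of the ring $T/I$ and is certainly not contractible. Knowing that each $R\Gamma(X_\sharp,\mathcal{F}^\bullet_I)$ is a perfect complex of free $T/I$-modules gives no null-homotopy of the induced map on $K$-theory: such complexes represent arbitrary classes in $K_0(T/I)$. The null-homotopy in the paper's argument comes not from the shape of the target but from the extra Frobenius endomorphism on the intermediate object over $\mathbb{Q}_p^{\mathrm{ur}}$, which your outline does not use. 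The reduction-to-$T/I$ step and the concern about canonicity are reasonable, but without a correct replacement for the swindle step the argument does not go through.
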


\indent Using relatively homotopical language, this could be directly related to the formulation adopted by Fukaya-Kato in Deligne's category when we focus on the corresponding \textit{spaces of 1 type} where the corresponding homotopy groups will vanish above the degree one. This has actually been considered extensively already in \cite{Wit1} in the context in terms of SGAIV and V. The corresponding categories we mentioned above looks very complicated to study in the general situation. Since our consideration is highly globalized due to the fact that we are considering the whole categories of all the corresponding interesting objects, instead of each single object. We have discussed some further development beyond the \'etale setting (in the $p$-adic setting) at the end of the paper. \\

\subsection{Why Relative $p$-adic Hodge theory?}

\indent Since our motivation comes from the deformation of representations of \'etale fundamental groups of higher dimensional spaces and the geometrization of Iwasawa theory, it is important to look inside the intrinsic Hodge structures through some coherent tools, which is why we need to consider the corresponding relative $p$-adic Hodge theory in some deformed setting. The main goal of the study around the fundamental groups as illustrated above actually is usually of the following aspects:

\begin{setting}\mbox{}\\
I. Try to understand the intrinsic geometric structures and its reconstruction effect on the varieties. \\
II. Try to understand the representations.	\\
III. Try to understand the deformation of representations.\\
IV. Try to understand the Iwasawa sheaves with coefficients in some equivariant setting (where the equivariance sometimes factors through the \'etale fundamental groups).\\
\end{setting}

\indent Essentially and especially in our setting, the generalization is actually more complicated than the usual situation, namely the representations are relative in the sense that it is varying over a variety which is somewhat as general as possible with some generalized coefficients. In the usual setting, one could regard everything as local systems in most of desired cases. Then using Scholze's pro-\'etale sites and pushing down one could define the corresponding period functors. In our setting one can definitely rely on the advantage of the original ideas of Kedlaya-Liu \cite{KL15} and \cite{KL16}, namely transforming the problem of representations of fundamental groups not to the local systems over pro-\'etale sites but to the modules over relative Robba rings over quasi pro-\'etale sites. Under general philosophy from Berger, everything on the $p$-adic Hodge module structure will be encoded in the module structure over relative Robba rings.

\subsection{Further Study}
In our current establishment, we have touched some foundational results along our consideration towards general Hodge-Iwasawa theory. Deformations of the corresponding Hodge structures in our consideration (namely the analytic module structures over Robba sheaves and rings) have beed shown by us to have some well-behaved properties. The aspect of deformation of such general Hodge structures will be definitely one of our long-term goal which should be parallel to those current existing comparison theorems and the existing various types of equivariant cohomological projects (for instance in the setting of \'etale cohomology and rigid cohomology). Over a rigid analytic space, the investigation of the corresponding families of the representations of the \'etale fundamental group seems to be a very hard problem for instance, especially when one would like to consider the equivariant setting, namely the corresponding equivariance coming from the quotient of the \'etale fundamental groups. We actually expect our insight should work in more general setting where one considers some general enough quotients here.\\

On the Iwasawa side, we expect that we can follow and generalize the original ideas in \cite{KP1} to geometrize the corresponding interesting towers in Tamagawa-Iwasawa theory. This is actually different from some standard construction in the original Iwasawa theory and $\Lambda$-adic Hodge theory. We expect this will give us a more uniform approach to recover many resulting Iwasawa theory of the general $p$-adic Hodge structures, for instance the corresponding exponential maps and the dual exponential maps, and the corresponding Iwasawa interpolation throughout some interesting character varieties, carrying some relativization. One another thing we would like to mention is that actually we have already motivated the corresponding study of the Iwasawa theory in the pseudocoherent setting beyond the vector bundle non-\'etale context.\\

Maybe a tour through the topological method in $p$-adic analysis should be possible in a long term consideration. Carrying some reasonable type of relativization and equivariance, one can apply higher categorical approach to study the corresponding relative $p$-adic Hodge theory in our context. We only here touched upon a piece of the story since our main goal here is Iwasawa deformation of general $p$-adic Hodge structure. But there is no reason to only restrict the consideration to this context. In other word the higher categorical investigation we limited here could motivate some further deeper understanding. Especially when we introduce the corresponding non-commutativity and higher-dimensional equivariance, where the context of the study of the rational $p$-adic Hodge theory looks quite challenging. On the other hand, actually the corresponding discussion above may also shed some light on the corresponding equivariant program in other contexts, such as the arithmetic $D$-modules. We would like to mention that actually the higher categorical aspects of $p$-adic Hodge theory are already encoded in some contexts in the literature, for instance the work \cite{BMS}.\\

\subsection{Notations}

\begin{center}
\begin{longtable}{p{7.8cm}p{8cm}}
Notation & Description \\
\hline
$p$ & A prime number.\\
$(R,R^+)$ & A perfect adic Banach algebra uniform over $\mathbb{F}_{p^h}$. \\
$A$ & A reduced affinoid algebra over $\mathbb{Q}_p$ in the rigid geometry after Tate. In general we consider the spectral norms on $A$ unless specified otherwise. \\
$T$ & An adic ring. Note that this is considered in the work for instance \cite{Wit1}, \cite{Wit2} and \cite[Section 1.4]{Wit3}.\\
$A_\infty(G)$ & Fr\'echet-Stein algebra attached to some nice $p$-adic group $G$ of type Lie. \\
$\Omega^\mathrm{int}_{R},	\Omega_{R}$ & Kedlaya-Liu's perfect period rings, over $R$. See \cite{KL15} and \cite{KL16}.\\
$\widetilde{\Pi}^{\mathrm{int},r}_{R},\widetilde{\Pi}^{\mathrm{bd},r}_{R},
\widetilde{\Pi}^{I}_{R},\widetilde{\Pi}^{+}_{R},\widetilde{\Pi}^{\infty}_{R},\widetilde{\Pi}_{R}$ & Kedlaya-Liu's perfect period rings, over $R$. See \cite{KL15} and \cite{KL16}.\\

$\Omega^\mathrm{int}_{R,A}$ & $A$-relative version of Kedlaya-Liu's perfect period rings, $\Omega^\mathrm{int}_{R,A}$.\\
$\Omega_{R,A}$ & $A$-relative version of Kedlaya-Liu's perfect period rings, $\Omega_{R,A}$.\\

$\widetilde{\Pi}^{I}_{R,A}$  & $A$-relative version of Kedlaya-Liu's relative perfect Robba rings.\\
$\widetilde{\Pi}^{r}_{R,A}$  & $A$-relative version of Kedlaya-Liu's relative perfect Robba rings.\\
$\widetilde{\Pi}^{\infty}_{R,A}$  & $A$-relative version of Kedlaya-Liu's relative perfect Robba rings.\\
$\widetilde{\Pi}^{+}_{R,A}$  & $A$-relative version of Kedlaya-Liu's relative perfect Robba rings.\\

$\widetilde{\Pi}^{\mathrm{int},r}_{R,A},\widetilde{\Pi}^{\mathrm{int}}_{R,A},
\widetilde{\Pi}^{\mathrm{int},+}_{R,A}$ & $A$-relative version of Kedlaya-Liu's relative perfect integral Robba rings.\\
$\widetilde{\Pi}^{\mathrm{bd},r}_{R,A},\widetilde{\Pi}^{\mathrm{bd}}_{R,A},
\widetilde{\Pi}^{\mathrm{bd},+}_{R,A}$ & $A$-relative version of Kedlaya-Liu's relative perfect bounded Robba rings.\\

$\underline{\underline{\Omega}}^\mathrm{int}_A,\underline{\underline{\Omega}}_A$ & $A$-relative version of Kedlaya-Liu's relative perfect sheaves over $\mathrm{Spa}(R,R^+)$.\\ 

$\underline{\underline{\widetilde{\Pi}}}_A^{\mathrm{int},r}, \underline{\underline{\widetilde{\Pi}}}_A^{\mathrm{int}}$ & $A$-relative version of Kedlaya-Liu's relative perfect sheaves over $\mathrm{Spa}(R,R^+)$. \\
$\underline{\underline{\widetilde{\Pi}}}_A^{\mathrm{bd},r},\underline{\underline{\widetilde{\Pi}}}_A^{\mathrm{bd}}$ & $A$-relative version of Kedlaya-Liu's relative perfect sheaves over $\mathrm{Spa}(R,R^+)$. \\

$\underline{\underline{\widetilde{\Pi}}}_A^{I},\underline{\underline{\widetilde{\Pi}}}_A^{\infty},\underline{\underline{\widetilde{\Pi}}}_A,\underline{\underline{\widetilde{\Pi}}}_A^{r}$ & $A$-relative version of Kedlaya-Liu's relative perfect sheaves over $\mathrm{Spa}(R,R^+)$.\\


$\Omega^\mathrm{int}_{R,A_\infty(G)}$ & $A_\infty(G)$-relative version of Kedlaya-Liu's perfect period rings, $\Omega^\mathrm{int}_{R,A_\infty(G)}$.\\
$\Omega_{R,A_\infty(G)}$ & $A_\infty(G)$-relative version of Kedlaya-Liu's perfect period rings, $\Omega_{R,A_\infty(G)}$.\\

$\widetilde{\Pi}^{I}_{R,A_\infty(G)}$  & $A_\infty(G)$-relative version of Kedlaya-Liu's relative perfect Robba rings.\\
$\widetilde{\Pi}^{r}_{R,A_\infty(G)}$  & $A_\infty(G)$-relative version of Kedlaya-Liu's relative perfect Robba rings.\\
$\widetilde{\Pi}^{\infty}_{R,A_\infty(G)}$  & $A_\infty(G)$-relative version of Kedlaya-Liu's relative perfect Robba rings.\\
$\widetilde{\Pi}^{+}_{R,A_\infty(G)}$  & $A_\infty(G)$-relative version of Kedlaya-Liu's relative perfect Robba rings.\\

$\widetilde{\Pi}^{\mathrm{int},r}_{R,A_\infty(G)},\widetilde{\Pi}^{\mathrm{int}}_{R,A_\infty(G)},
\widetilde{\Pi}^{\mathrm{int},+}_{R,A_\infty(G)}$ & $A_\infty(G)$-relative version of Kedlaya-Liu's relative perfect integral Robba rings.\\
$\widetilde{\Pi}^{\mathrm{bd},r}_{R,A_\infty(G)},\widetilde{\Pi}^{\mathrm{bd}}_{R,A_\infty(G)},
\widetilde{\Pi}^{\mathrm{bd},+}_{R,A_\infty(G)}$ & $A_\infty(G)$-relative version of Kedlaya-Liu's relative perfect bounded Robba rings.\\

$\underline{\underline{\Omega}}^\mathrm{int}_{A_\infty(G)},\underline{\underline{\Omega}}_{A_\infty(G)}$ & $A_\infty(G)$-relative version of Kedlaya-Liu's relative perfect sheaves over $\mathrm{Spa}(R,R^+)$.\\ 

$\underline{\underline{\widetilde{\Pi}}}_{A_\infty(G)}^{\mathrm{int},r}, \underline{\underline{\widetilde{\Pi}}}_{A_\infty(G)}^{\mathrm{int}}$ & $A_\infty(G)$-relative version of Kedlaya-Liu's relative perfect sheaves over $\mathrm{Spa}(R,R^+)$. \\
$\underline{\underline{\widetilde{\Pi}}}_{A_\infty(G)}^{\mathrm{bd},r},\underline{\underline{\widetilde{\Pi}}}_{A_\infty(G)}^{\mathrm{bd}}$ & $A_\infty(G)$-relative version of Kedlaya-Liu's relative perfect sheaves over $\mathrm{Spa}(R,R^+)$. \\

$\underline{\underline{\widetilde{\Pi}}}_{A_\infty(G)}^{I},\underline{\underline{\widetilde{\Pi}}}_{A_\infty(G)}^{\infty},\underline{\underline{\widetilde{\Pi}}}_{A_\infty(G)},\underline{\underline{\widetilde{\Pi}}}_{A_\infty(G)}^{r}$ & $A_\infty(G)$-relative version of Kedlaya-Liu's relative perfect sheaves over $\mathrm{Spa}(R,R^+)$.\\


$X$ & Locally $v$-ringed spaces or preadic spaces.\\

$X,X_{\text{\'et}},X_{\text{pro\'et}}$ & The corresponding analytic sites, \'etale sites, pro-\'etale sites of $X$.\\

\end{longtable}
\end{center}

\newpage

\section{Frobenius Modules over Ind-Fr\'echet Algebras}

\noindent Now we define the Frobenius modules in our setting, which are in some sense the generalized Hodge structures. Following \cite{KL15} and \cite{KL16} we know that actually the main tool in the study will be to find the links between the local systems or representations of fundamental groups with the Frobenius modules. Note that this is actually a common feature of $p$-adic cohomology theories or if you wish the $p$-adic weights after Deligne et al, which is to say in more detail: initially one has not the chance to see the intrinsic Hodge structures from the coefficient systems or the representations which is to say for instance the Frobenius structure, instead one can only have the chance to observe this after the application of Fontaine's functors. This is actually not the same as the situation in the initial Hodge theory and the $\ell$-adic weights theory in the sense again Deligne et al.\\

\indent So let us consider the following notations of Frobenius modules and Frobenius bundles, where the latter will be some collection of kind of sections with respect to each interval.

\begin{setting}
Following \cite[Chapter 5]{KL15}, in the notation of both \cite{KL16} and \cite{KL15} we will in our context consider the situation where $h=1$ in our initial consideration on the finite field $\mathbb{F}_{p^h}$. And note that we could have another parameter $a$ in our initial setting of the definitions, so we are going to consider $a$-th power Frobenius $\varphi^a$ in the following. This means that in the characteristic $0$ situation we will consider $\mathbb{Q}_{p^a}$ and in the characteristic $p>0$ we consider working over the finite field $\mathbb{F}_{p^a}$ for some chosen positive integer $a\geq 1$.
\end{setting}

\begin{definition}
For general $A$ which is a reduced affinoid algebra over $\mathbb{Q}_p$ with integral subring $\mathfrak{o}_A$, we define the following various $A$-relative version of the corresponding Period rings from \cite[Chapter 5]{KL15}:
\begin{align}
\widetilde{\Pi}_{R,A}:=\bigcup_{r}	\widetilde{\Pi}^{r}_{R,A},\widetilde{\Pi}^{r}_{R,A}:=	\widetilde{\Pi}^{r}_{R}\widehat{\otimes}_{\mathbb{Q}_p}A,
\end{align}
\begin{align}
\widetilde{\Pi}^{I}_{R,A}:=	\widetilde{\Pi}^{I}_{R}\widehat{\otimes}_{\mathbb{Q}_p}A,\widetilde{\Pi}^\infty_{R,A}:=	\widetilde{\Pi}^\infty_{R}\widehat{\otimes}_{\mathbb{Q}_p}A.
\end{align}
\begin{align}
\widetilde{\Pi}^{\mathrm{bd}}_{R,A}:=\bigcup_{r}	\widetilde{\Pi}^{\mathrm{bd},r}_{R}\widehat{\otimes}_{\mathbb{Q}_p}A,\widetilde{\Pi}^{\mathrm{bd},r}_{R,A}:=	\widetilde{\Pi}^{\mathrm{bd},r}_{R}\widehat{\otimes}_{\mathbb{Q}_p}A,
\end{align}
\begin{align}
\widetilde{\Pi}^{\mathrm{int}}_{R,A}:=\bigcup_{r}	\widetilde{\Pi}^{\mathrm{int},r}_{R}\widehat{\otimes}_{\mathbb{Z}_p}\mathfrak{o}_A,\widetilde{\Pi}^{\mathrm{int},r}_{R,A}:=	\widetilde{\Pi}^{\mathrm{int},r}_{R}\widehat{\otimes}_{\mathbb{Z}_p}\mathfrak{o}_A.
\end{align}

\end{definition}

\begin{remark}
One can also consider more deformed version of the period rings as in \cite{KL15} in some natural way which we do not explicitly write.
\end{remark}

\begin{example}
To be more concrete, in the situation where $A$ is $\mathbb{Q}_p\left<T_1,...,T_d\right>$ for some $d$, we can consider the following more explicit definitions. First form the ring of Witt vectors $W(R)$, then we consider the algebra $W(R)_A$ which is defined by taking the corresponding completed tensor product whose elements have the form of:
\begin{displaymath}
\sum_{n\geq 0,i_1\geq 0,...,i_n \geq 0}p^n[\overline{x}_{n,i_1,...,i_d}]T_1^{i_1}T_2^{i_2}...T_d^{i_d}.
\end{displaymath}
Over $W(R)_A$ we have the Gauss norm $\left\|.\right\|_{\alpha^r,A}$ for each $r>0$ which is defined by 
\begin{displaymath}
\left\|.\right\|_{\alpha^r,A}(\sum_{n\geq 0,i_1\geq 0,...,i_n \geq 0}p^n[\overline{x}_{n,i_1,...,i_d}]T_1^{i_1}T_2^{i_2}...T_d^{i_d}):=\sup_{n\geq 0,i_1\geq 0,...,i_n \geq 0}\{p^{-n}\alpha(\overline{x}_{n,i_1,...,i_d})^r\}.
\end{displaymath}
Then we define $\widetilde{\Pi}_{R,A}^{\mathrm{int},r}$ to be the subring of $W(R)_A$ consisting of all the elements satisfying:
\begin{displaymath}
\lim_{n,i_1,...,i_d}\left\|.\right\|_{\alpha^r,A}(p^{-n}\alpha(\overline{x}_{n,i_1,...,i_d})^r)=0.	
\end{displaymath}
And we put $\widetilde{\Pi}_{R,A}^{\mathrm{int}}$ to be the union of $\widetilde{\Pi}_{R,A}^{\mathrm{int},r}$ for all $r>0$. We then define $\widetilde{\Pi}_{R,A}^{\mathrm{bd},r}$ to be $\widetilde{\Pi}_{R,A}^{\mathrm{int},r}[1/p]$, and furthermore define the ring $\widetilde{\Pi}_{R,A}^{\mathrm{bd}}$ to be the corresponding union throughout all $r>0$. Then we define the Robba ring $\widetilde{\Pi}^I_{R,A}$ for some interval $I$ as the Fr\'echet completion of the ring $W(R)_A[1/p]$ by using the family of norms $\left\|.\right\|_{\alpha^t,A}$ for all $t\in I$. Then one could take specific intervals $(0,r]$ or $(0,\infty)$ to define the corresponding Robba rings $\widetilde{\Pi}^r_{R,A}$ and $\widetilde{\Pi}^\infty_{R,A}$. Finally we can define the whole Robba ring $\widetilde{\Pi}_{R,A}$ as the union of all the $\widetilde{\Pi}^r_{R,A}$ throughout all $r>0$.

\end{example}

\begin{setting}
We need to specify the corresponding Frobenius in our setting, where we will consider the Frobenius action which is $A$-linear but acts trivially over the $A$-part in the products appeared as above in the definitions. 	
\end{setting}

\begin{proposition}
For $A$ a reduced affinoid algebra over $\mathbb{Q}_p$ as defined above, we have the corresponding equality:
\begin{displaymath}
\widetilde{\Pi}^{[s_{1},r_{1}]}_{R,A}\bigcap\widetilde{\Pi}^{[s_{2},r_{2}]}_{R,A}=\widetilde{\Pi}^{[s_{1},r_{2}]}_{R,A}.	
\end{displaymath}
Here we assume that $0<s_1\leq s_2\leq r_1\leq r_2$.	
\end{proposition}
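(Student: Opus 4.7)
The plan is to reduce the identity to log-convexity of the Gauss norms $\|\cdot\|_{\alpha^t,A}$ as functions of $t$, combined with the set-theoretic fact that $[s_1,r_1]\cup[s_2,r_2]=[s_1,r_2]$ under the overlap hypothesis $s_2\le r_1$. The inclusion $\widetilde{\Pi}^{[s_1,r_2]}_{R,A}\subseteq \widetilde{\Pi}^{[s_1,r_1]}_{R,A}\cap\widetilde{\Pi}^{[s_2,r_2]}_{R,A}$ is the soft direction: since each $[s_i,r_i]$ sits inside $[s_1,r_2]$, the family of seminorms defining $\widetilde{\Pi}^{[s_i,r_i]}_{R,A}$ is a subfamily of the one defining $\widetilde{\Pi}^{[s_1,r_2]}_{R,A}$, so any Cauchy sequence in $W(R)_A[1/p]$ for the larger family is automatically Cauchy for each sub-family, yielding continuous injections $\widetilde{\Pi}^{[s_1,r_2]}_{R,A}\hookrightarrow \widetilde{\Pi}^{[s_i,r_i]}_{R,A}$ for $i=1,2$, where injectivity is read off from the explicit Teichm\"uller series picture displayed in the Example above (the map is literally the identity on coefficients).

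For the reverse inclusion I would work in that same Teichm\"uller picture. An element $x\in\widetilde{\Pi}^{[s,r]}_{R,A}$ presents itself as a generalised series $\sum_{n,\vec{i}} p^n [\overline{x}_{n,\vec{i}}]T^{\vec{i}}$ (directly from the Example in the case $A=\mathbb{Q}_p\langle T_1,\dots,T_d\rangle$, and for general reduced affinoid $A$ after realising $A$ as a quotient of a Tate algebra and transporting the description), and belonging to the ring amounts to the appropriate convergence of $\|\cdot\|_{\alpha^t,A}$ for every $t\in[s,r]$. Each individual summand satisfies $\|p^n[\overline{x}_{n,\vec{i}}]T^{\vec{i}}\|_{\alpha^t,A}=p^{-n}\alpha(\overline{x}_{n,\vec{i}})^t$, a log-affine function of $t$; the full norm $\|x\|_{\alpha^t,A}$ is a supremum of such expressions, so $t\mapsto \log\|x\|_{\alpha^t,A}$ is convex. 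This yields the interpolation bound
\[
\|x\|_{\alpha^t,A}\le \max\bigl(\|x\|_{\alpha^{s_1},A},\,\|x\|_{\alpha^{r_2},A}\bigr)\qquad\text{for every }t\in[s_1,r_2].
\]
An element $x$ in the intersection has finite $\alpha^{s_1}$-norm from its membership in $\widetilde{\Pi}^{[s_1,r_1]}_{R,A}$ and finite $\alpha^{r_2}$-norm from its membership in $\widetilde{\Pi}^{[s_2,r_2]}_{R,A}$; log-convexity then delivers both the finite $\alpha^t$-norm and the required coefficient-vanishing condition at every $t\in[s_1,r_2]$, placing $x$ in $\widetilde{\Pi}^{[s_1,r_2]}_{R,A}$.

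The hard part is justifying the intrinsic Teichm\"uller series representation of $\widetilde{\Pi}^{I}_{R,A}$ for a general reduced affinoid $A$, since by definition these rings arise as completed tensor products $\widetilde{\Pi}^{I}_{R}\widehat{\otimes}_{\mathbb{Q}_p}A$ rather than from an a priori series expansion; one must verify that the completion commutes with passage to such expansions and that the expansions are unique. The cleanest alternative is to bypass explicit series altogether and lift the absolute intersection identity of Kedlaya--Liu (Chapter 5 of \cite{KL15}) to the $A$-relative setting by showing that $-\widehat{\otimes}_{\mathbb{Q}_p}A$ preserves intersections of the relevant closed Fr\'echet subspaces; this is a flatness-type statement for $A$ viewed as a Banach $\mathbb{Q}_p$-algebra, and is where the real technical work of the argument is concentrated.
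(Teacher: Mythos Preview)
Your proposal is correct, and your main route via log-convexity of the Gauss norms is precisely what the paper means by ``the strategy which mimicks \cite[Section 5.2]{KL15}''. The paper's proof is a two-line sketch offering that route and one alternative: rather than your flatness-of-completed-tensor-product argument, the paper simply says to write an arbitrary element as $\sum_i r_i\otimes a_i$ with $r_i$ in the non-relative Robba rings and $a_i\in A$, and then invoke the known absolute intersection identity on the $r_i$. This is more concrete than your abstract flatness statement and bypasses the explicit Teichm\"uller picture for general $A$, but it quietly assumes that such tensor representations exist and are compatible across the three Fr\'echet completions in play---which is a mild form of exactly the technical point you flag at the end.
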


\begin{proof}
This could be proved by using the strategy which mimicks \cite[Section 5.2]{KL15}, or one could consider any representation taking the form of $\sum_{i}r_i\otimes a_i$ with $r_i$ in the non-relative Robba rings involved and $a_i\in A$, then the result follows from the situation without considering the algebra $A$.	
\end{proof}

\begin{definition} \label{Def2.6}
Consider the $A$-relative ring of periods 
\begin{center}
$\triangle:\Omega_{R,A},\widetilde{\Pi}^\mathrm{bd}_{R,A},\widetilde{\Pi}^\mathrm{int}_{R,A}$, $\widetilde{\Pi}_{R,A},\widetilde{\Pi}^+_{R,A},\widetilde{\Pi}^\infty_{R,A}$. 
\end{center}
Then we are going to define the $A$-relative $\varphi^a$-module over $\triangle$ to be a finite locally free $\triangle$-module carrying semilinear Frobenius action of $\varphi^a$. Moreover now for $\triangle:\widetilde{\Pi}^\mathrm{bd}_{R,A},\widetilde{\Pi}^\mathrm{int}_{R,A},\widetilde{\Pi}_{R,A}$ we have for each radius $r>0$ a module $M_{r}$ over $\triangle^{r}$ descending the module $M$ and the isomorphism 
\begin{center}
$\varphi^*M_r  \overset{\sim}{\rightarrow}M_r\otimes_{\triangle^{r}}\triangle^{r/q}$.
\end{center}
We will call the module $M_{r}$ over $\triangle^{r}$ a model of $M$. One can also define any $A$-relative Frobenius module to be one as above coming from some base change of some model $M_r$.
\end{definition}

\indent Then one has the corresponding bundles in our setting as in the following generalizing the situation in \cite[Section 6.1]{KL15}.

\begin{definition}
Consider the period ring $\triangle:=\widetilde{\Pi}_{R,A}$. We define a $\varphi^a$-module $M_{I}$ over $\triangle_{I}$ for some closed interval $I\subset (0,\infty)$ having the form of $[s,r]$ with $0<s\leq r/q$ to be a finite locally free module over $\triangle^{I}$ carrying the semilinear Frobenius and the isomorphism 
\begin{displaymath}
\varphi^*M_{I}\otimes_{\triangle^{[s/q,r/q]}}\triangle^{[s,r/q]}\overset{\sim}{\rightarrow}M_{I}\otimes_{\triangle^{[s,r]}}\triangle^{[s,r/q]}.
\end{displaymath}

Then under this restriction on the interval we define a $\varphi^a$-bundle to be a collection $\{M_{I}\}_{I}$ of finite locally free modules $M_{I}$ over each $\triangle^{I}$ such that for each pair of intervals such that $I\subset I'$ we have an isomorphism $\psi_{I',I}:M_{I'}\otimes \triangle^{I}\overset{\sim}{\rightarrow} M_{I}$ and for a triple of intervals such that $I\subset I'\subset I''$ we have the obvious cocycle condition $\psi_{I',I}\circ\psi_{I'',I'}=\psi_{I'',I}$. Note that we have only finished the definitions under the restriction on the intervals. Now as in \cite[Section 6.1]{KL15} we extend this to any more general interval $I$ by taking any pair $I\subset I'$ of  intervals and extracting some module $M_{I'}$ where $I'$ satisfies the restriction in our previous definition. Then also we extend the definition of $\varphi^a$-bundles to all the general intervals in our setting. Finally we define global section of a $\varphi^a$-bundle to be a collection of elements having the general form $\{v_{I}\}_{I}$ such that for each pair $I\subset I'$ we have that the image of $v_{I'}$ under the image of $\psi_{I',I}$ is $v_{I}$.
\end{definition}

\indent Now as in the usual situation it is natural to now consider the finiteness issue as in the following:

\begin{proposition}\mbox{\bf (After Kedlaya-Liu \cite[Lemma 6.1.4]{KL15})} \label{proposition2.8}
Consider now an arbitrary $\varphi^a$-bundle $\{M_{I}\}$ over $\widetilde{\Pi}_{R,A}$. Suppose now that for any interval $I\subset (0,\infty)$  we have that there exists a finite number generating elements $\mathbf{e}_1,...,\mathbf{e}_n$ for some uniform $n\in \mathbb{Z}$ for the finitely locally free module $M_{I}$. Then this set of elements could be promoted to be a generating set of the global section as a module but now over $\widetilde{\Pi}^\infty_{R,A}$.
\end{proposition}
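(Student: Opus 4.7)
The plan is to adapt the Frobenius-equivariant convergence argument of Kedlaya--Liu \cite[Lemma 6.1.4]{KL15} to the $A$-relative setting. First I would fix a cofinal ascending chain of closed intervals $I_k \subset I_{k+1}$ exhausting $(0,\infty)$, for instance $I_k = [s_0 q^{-k}, r_0 q^k]$, with $s_0 \le r_0/q$ so that the $\varphi^a$-isomorphism of Definition 2.7 applies on each $I_k$. By hypothesis we have for every $k$ a generating system $\mathbf{e}_1^{(k)}, \ldots, \mathbf{e}_n^{(k)}$ for $M_{I_k}$ over $\widetilde{\Pi}^{I_k}_{R,A}$ with the same $n$, and the transition maps $\psi_{I_{k+1}, I_k}$ allow us to view the restriction of the $(k+1)$-level generators as another generating system for $M_{I_k}$.

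The core construction is to modify the $\mathbf{e}_j^{(k+1)}$ successively so that they converge to elements defined on the entire punctured ray, with respect to the Fréchet seminorms $\|\cdot\|_{\alpha^t,A}$ for $t$ in larger and larger intervals. Writing the change-of-basis between $\{\psi_{I_{k+1},I_k}(\mathbf{e}_j^{(k+1)})\}$ and $\{\mathbf{e}_j^{(k)}\}$ as a matrix $U_k$ with entries in $\widetilde{\Pi}^{I_k}_{R,A}$, one uses the Frobenius-semilinear isomorphism
\[
\varphi^\ast M_{I_k}\otimes_{\widetilde{\Pi}^{[s/q,r/q]}_{R,A}}\widetilde{\Pi}^{[s,r/q]}_{R,A} \;\overset{\sim}{\longrightarrow}\; M_{I_k}\otimes_{\widetilde{\Pi}^{[s,r]}_{R,A}}\widetilde{\Pi}^{[s,r/q]}_{R,A}
\]
to twist $U_k$ by $\varphi^a$, thereby transporting the discrepancy to a sub-interval where it is measurably smaller, since $\varphi^a$ scales the relevant radii by $q^a$. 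Iterating this procedure on each level and interleaving across $k$, I expect to produce a Cauchy sequence of generating tuples whose limit $\tilde{\mathbf{e}}_1,\ldots,\tilde{\mathbf{e}}_n$ lies in every $M_{I_k}$ compatibly. Because compatibility across all intervals is precisely what defines a global section, and because the limit is forced to live on the intersection $\bigcap_k \widetilde{\Pi}^{I_k}_{R,A}$ which contains $\widetilde{\Pi}^\infty_{R,A}$, this realizes the desired generating set over $\widetilde{\Pi}^\infty_{R,A}$.

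The main obstacle, in my view, is maintaining the geometric-contraction estimates after completed tensor product with $A$. In the absolute case, Kedlaya--Liu's estimates rely on the explicit Gauss norms $\|\cdot\|_{\alpha^r}$ and their behaviour under Frobenius; in our relative situation the norms $\|\cdot\|_{\alpha^r,A}$ are defined on $W(R)\widehat{\otimes}_{\mathbb{Q}_p}A$ as in the explicit example above, and because our Frobenius acts trivially on the $A$-tensor factor, the norm splits and the same contraction factor $q^{-a}$ governs convergence uniformly in the $A$-direction. A secondary technical issue is that $A$ is reduced but not a field, so local freeness of $M_{I_k}$ does not automatically furnish a global basis; this forces one to argue either after refining over a suitable cover of $\mathrm{Spec}\, A$ and re-gluing, or to work throughout only with generating tuples of fixed length $n$ rather than bases (which is precisely the formulation given in the proposition, and which is why the conclusion is stated in terms of a generating set rather than a basis).
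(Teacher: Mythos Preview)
Your proposal rests on a misreading of the hypothesis and imports the wrong machinery from \cite{KL15}. The assumption is not that each $M_{I_k}$ has its own generating set $\mathbf{e}_1^{(k)},\ldots,\mathbf{e}_n^{(k)}$ of uniform size; rather, there is a \emph{single} tuple $\mathbf{e}_1,\ldots,\mathbf{e}_n$ of global sections which generates every $M_I$, and the task is to show that this same tuple generates the $\widetilde{\Pi}^\infty_{R,A}$-module of global sections. There is therefore nothing to ``promote'' by taking limits of changing generating systems, and your Cauchy-sequence-of-tuples construction is solving a problem that is not posed.

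More seriously, the Frobenius plays no role in the proof. You have conflated \cite[Lemma 6.1.4]{KL15} with the later contraction arguments of \cite[Propositions 6.2.2 and 6.2.4]{KL15}; the latter do use $\varphi^a$ to shrink intervals, but Lemma 6.1.4 is a pure Fr\'echet approximation statement. The paper's argument runs as follows: fix a global section $\mathbf{v}$; on the interval $[p^{-\ell},p^\ell]$ write $\mathbf{v}=\sum_i A_{i,\ell}\mathbf{e}_i$ with $A_{i,\ell}\in\widetilde{\Pi}^{[p^{-\ell},p^\ell]}_{R,A}$; then use the density of $\widetilde{\Pi}^\infty_{R,A}$ in each $\widetilde{\Pi}^{[p^{-\ell},p^\ell]}_{R,A}$ to choose approximants $B_{i,\ell}\in\widetilde{\Pi}^\infty_{R,A}$ with $\|B_{i,\ell}-A_{i,\ell}\|_{\alpha^t,A}$ small enough (controlled by the norm-comparison constants $e_\ell$ between subspace and quotient norms on $M_{[p^{-\ell},p^\ell]}$). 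The series $\sum_\ell B_{i,\ell}$ then converges in $\widetilde{\Pi}^\infty_{R,A}$ and gives $\mathbf{v}=\sum_i(\sum_\ell B_{i,\ell})\mathbf{e}_i$. The only point to check in the $A$-relative setting is that density and the norm comparison survive the completed tensor product, which is immediate since the norms $\|\cdot\|_{\alpha^t,A}$ are product norms and $A$ plays a passive role.
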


\begin{proof}
This is generalization of the corresponding results in \cite[Lemma 6.1.4]{KL15} in the usual Hodge structures. Basically the main issue in the proof is to exhibit the desired finitely generatedness by some approximating process as in the usual situation. As in the usual situation we consider first the morphism $M_{[p^{-\ell},p^\ell]}\rightarrow (\widetilde{\Pi}^{[p^{-\ell},p^\ell]}_{R,A})^n$ by using the corresponding basis as in the assumption whose composite with the inverse $(\widetilde{\Pi}^{[p^{-\ell},p^\ell]}_{R,A})^n\rightarrow M_{[p^{-\ell},p^\ell]}$ is the identity map. And one could then bound the gap between the subspace norm and the quotient norm of $M_{[p^{-\ell},p^\ell]^n}$ by some constant $e_\ell$ as in \cite[Lemma 6.1.4]{KL15}. Now let $\mathbf{v}\in M$. Then the point is to extract for each $i=1,...,n$ and $\ell=0,1,...$ the following element $B_{i,\ell}\in \widetilde{\Pi}^{[p^{-\ell},p^\ell]}_{R,A}$ and $A_{i,\ell}\in \widetilde{\Pi}^\infty_{R,A}$ by the following fashion following \cite[Lemma 6.1.4]{KL15}, first for any $j<\ell$ we choose the corresponding $B_{i,j},j<\ell$ such that $\mathbf{v}-\sum_{i=1}^n\sum_{j\leq \ell}B_{i,j}\mathbf{e}_{i}=\sum_{i=1}A_{i,\ell}\mathbf{e}_i$ after which we choose $B_{i,\ell}$ to ensure that $\left\|.\right\|_{\alpha^{t},A}(B_{i,\ell}-A_{i,\ell})\leq p^{-1}e_\ell^{-1}\left\|.\right\|_{\alpha^{t},A}(A_{i,\ell})$ for each $i=1,...,n$ with $t \in [p^{-\ell},p^\ell]$. Then one could then finish as in \cite[Lemma 6.1.4]{KL15} since by the suitable choices we will have desired convergence which gives rise to a desired expression of $\mathbf{v}$ in terms of the basis given.

\end{proof}

\begin{lemma}
Consider the corresponding projection map from a $\varphi^a$-modules $M$ over $\widetilde{\Pi}_{R,A}$ to the corresponding module $M_{I}$ over $\widetilde{\Pi}_{R,A}^{I}$ for some interval $I$. Then we have that this operation is a tensor equivalence. 	
\end{lemma}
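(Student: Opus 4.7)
The plan is to produce a quasi-inverse to the projection $M\mapsto M_I$ by a Frobenius-driven extension procedure in the style of Kedlaya--Liu (KL15, \S6.2), adapted to the $A$-relative Fr\'echet setting, with Proposition~\ref{proposition2.8} supplying the crucial finite-generation statement.

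First I would construct the candidate inverse functor. Starting from a $\varphi^a$-module $N_I$ over $\widetilde{\Pi}^I_{R,A}$ on an interval $I=[s,r]$ with $0<s\leq r/q$, together with its Frobenius isomorphism $\varphi^*N_I\otimes_{\widetilde{\Pi}^{[s/q,r/q]}_{R,A}}\widetilde{\Pi}^{[s,r/q]}_{R,A}\xrightarrow{\sim}N_I\otimes_{\widetilde{\Pi}^{[s,r]}_{R,A}}\widetilde{\Pi}^{[s,r/q]}_{R,A}$, one iteratively extends to intervals $[sq^{-k},rq^{k'}]$ by pushing along $\varphi^a$ and pulling along $(\varphi^a)^{-1}$. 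The preceding intersection formula $\widetilde{\Pi}^{[s_1,r_1]}_{R,A}\cap\widetilde{\Pi}^{[s_2,r_2]}_{R,A}=\widetilde{\Pi}^{[s_1,r_2]}_{R,A}$ guarantees that these pieces assemble, on the prescribed class of intervals, into a $\varphi^a$-bundle $\{N_J\}_J$ in the sense of the earlier definition. Taking global sections yields a module $N$ over $\widetilde{\Pi}^\infty_{R,A}$, which by Proposition~\ref{proposition2.8} is finitely generated with a uniform set of generators inherited from any single $N_J$. Base-changing along $\widetilde{\Pi}^\infty_{R,A}\hookrightarrow\widetilde{\Pi}_{R,A}$ produces the sought $\varphi^a$-module over $\widetilde{\Pi}_{R,A}$.

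Next I would verify that the two functors are mutually inverse. Applied to some $M_I$ coming from an $M$ over $\widetilde{\Pi}_{R,A}$, the Frobenius extension just recovers the models of $M$ (by uniqueness of models, again via the intersection formula), so the global section is canonically $M$ itself. In the other direction, the bundle produced from $N_I$ reproduces $N_I$ on the interval $I$ tautologically. Full faithfulness follows by the same extension technique applied to morphisms: any $f_I:M_I\to N_I$ commuting with the Frobenius structure extends uniquely to each $\widetilde{\Pi}^J_{R,A}$ via the Frobenius isomorphisms on source and target, and these extensions glue on overlaps by the intersection formula, yielding a unique $f:M\to N$ with projection $f_I$.

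Finally, tensor compatibility is automatic since base change along $\widetilde{\Pi}_{R,A}\to\widetilde{\Pi}^I_{R,A}$ commutes with tensor products of finite locally free modules, and the Frobenius on $M\otimes N$ is the tensor of the Frobenius on $M$ and $N$ at each stage of the extension. The main obstacle will be to control finite-local-freeness in the $A$-relative Fr\'echet setting at each step of the Frobenius extension: one must ensure that pushforward along $\varphi^a$ and descent along $(\varphi^a)^{-1}$ preserve finite local freeness over the intermediate rings $\widetilde{\Pi}^J_{R,A}$, and that the global-sections module is not merely finitely generated but finite locally free after inverting to $\widetilde{\Pi}_{R,A}$. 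This is exactly where Proposition~\ref{proposition2.8}, combined with careful book-keeping of the transition isomorphisms of the bundle, does most of the work.
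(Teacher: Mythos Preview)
The paper's own proof is simply a pointer to \cite[Lemma 6.1.5]{KL15}, so there is no detailed argument in the paper to compare against. Your strategy is in the right spirit, but it takes an unnecessary detour that manufactures exactly the obstacle you flag at the end.

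You extend $N_I$ in \emph{both} directions to build a full $\varphi^a$-bundle over $(0,\infty)$, then invoke Proposition~\ref{proposition2.8} to extract a finitely generated global-sections module over $\widetilde{\Pi}^\infty_{R,A}$, and only afterwards base-change to $\widetilde{\Pi}_{R,A}$. This forces you to worry about finite local freeness over $\widetilde{\Pi}^\infty_{R,A}$, which Proposition~\ref{proposition2.8} does not supply. The direct route (and the one actually taken in \cite[Lemma 6.1.5]{KL15}) sidesteps this entirely: extend only \emph{downward}, from $[s,r]$ to $[s/q,r]$, $[s/q^2,r]$, and so on, gluing at each stage via the Frobenius isomorphism and the intersection formula you already cited. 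Each gluing is between two finite locally free modules agreeing on an overlap, so finite local freeness is preserved step by step, and passing to the Fr\'echet limit over these nested intervals yields a finite locally free module $M_r$ over $\widetilde{\Pi}^r_{R,A}=\varprojlim_k\widetilde{\Pi}^{[s/q^k,r]}_{R,A}$. By Definition~\ref{Def2.6} this \emph{is} a model, so its base change to $\widetilde{\Pi}_{R,A}$ is the desired $\varphi^a$-module. No passage through $\widetilde{\Pi}^\infty_{R,A}$ is needed, and Proposition~\ref{proposition2.8} plays no role here; it only enters later when one genuinely compares with modules over $\widetilde{\Pi}^\infty_{R,A}$ (as in Proposition~\ref{corollary7.6}). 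Your full faithfulness and tensor-compatibility paragraphs are fine and carry over unchanged to this simpler construction.
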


\begin{proof}
See \cite[Lemma 6.1.5]{KL15}. 
\end{proof}

\indent Then we study the Frobenius invariance in our setting which generalizes the absolute situation where we do not have any deformation at all. Here we are going to use the notation $M(n)$ to denote the corresponding twist of $M$ where the twist is defined to be the one where $\varphi$ acts by $p^{-n}$.

\begin{proposition}\label{prop2.9} \mbox{\bf (After Kedlaya-Liu \cite[Proposition 6.2.2]{KL15})} 
Now we consider a $\varphi^a$-bundle $M$ over $\widetilde{\Pi}_{R,A}$. Then we have that there exists some integer $N\geq 1$ such that for $n\geq N$ we have that $\varphi^a-1:M_{[s,rq]}(n)\rightarrow M_{[s,r]}(n)$ is surjective. One could also have the chance to take the integer to be $1$ if the module could be derived from some module defined over $\widetilde{\Pi}_{R,A}^\mathrm{int}$. 	
\end{proposition}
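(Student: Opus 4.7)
The plan is to mimic the contraction-mapping strategy of Kedlaya--Liu \cite[Proposition 6.2.2]{KL15}, reducing the surjectivity of $\varphi^a-1$ to an explicit geometric-series solution at the level of matrices, with the Tate twist by large $n$ supplying the requisite contraction factor.

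First I would invoke Proposition \ref{proposition2.8} to pick global sections $\mathbf{e}_1,\ldots,\mathbf{e}_m$ furnishing a generating set of each $M_I$ as a $\widetilde{\Pi}^I_{R,A}$-module for $I$ in a cofinal family of subintervals of $(0,\infty)$. In this trivialization the Frobenius takes matrix form $\varphi^a(\mathbf{e}_j)=\sum_i A_{ij}\mathbf{e}_i$, with $A=(A_{ij})$ having Gauss norms $\|\cdot\|_{\alpha^t,A}$ uniformly controlled on each compact subinterval. Given $v=\sum v_i\mathbf{e}_i\in M_{[s,r]}(n)$, solving $(\varphi^a-1)(w)=v$ for $w\in M_{[s,rq]}(n)$ translates into finding a coordinate vector $\vec w\in(\widetilde{\Pi}^{[s,rq]}_{R,A})^m$ with
\[
\vec w \;=\; c_n\, A\,\varphi^a(\vec w)\,-\,\vec v,
\]
where $c_n$ is the scalar coming from the Tate twist on $M(n)$.

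Next I would run the Picard iteration $\vec w^{(0)}=-\vec v$, $\vec w^{(k+1)}=c_n A\varphi^a(\vec w^{(k)})-\vec v$; the incremental differences are then iterated applications of the composite operator $T:=c_n A\varphi^a$. The crucial norm input is that $\varphi^a$ shifts the radius parameter of the Gauss norms by a factor of $q$, so that $\|T\vec x\|_{\alpha^t,A}$ on $[s,r]$ is bounded by $|c_n|_p\cdot\|A\|\cdot\|\vec x\|_{\alpha^{qt},A}$; the extra slack between the source interval $[s,rq]$ and the target interval $[s,r]$ is precisely what accommodates the factor-$q$ shift. For $n$ large enough the Tate-twist scalar $|c_n|_p$ beats the norm of $A$ uniformly on each compact subinterval, so $T$ is strictly contracting and the series $\sum_{k\ge 0}T^k(-\vec v)$ converges to an element of $(\widetilde{\Pi}^{[s,rq]}_{R,A})^m$, producing the desired preimage $w$.

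Finally, the integral assertion follows from the same argument but with $N=1$: if $M$ descends to a $\varphi^a$-module over $\widetilde{\Pi}^{\mathrm{int}}_{R,A}$, then $A$ can be chosen with entries of Gauss norm $\leq 1$, so the unrescaled operator $A\varphi^a$ is already a contraction and no Tate twist is needed to force convergence. The main obstacle I anticipate is not any single estimate but the bookkeeping of intervals: after each application of $\varphi^a$ the iterates live a priori on shifting subintervals, so one must use the cocycle identifications $\psi_{I',I}$ of the bundle structure together with the uniform generating set from Proposition \ref{proposition2.8} to check that the limit is an honest section of $M$ over $[s,rq]$ rather than a merely formal sum. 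It is also this step where one must verify that the estimates are uniform in $t$ across the relevant compact interval, so that convergence in each Gauss norm implies convergence in the Fr\'echet structure defining $\widetilde{\Pi}^{[s,rq]}_{R,A}$.
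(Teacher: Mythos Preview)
Your one-sided Picard iteration with $T=c_n A\varphi^a$ cannot produce a preimage on $[s,rq]$, and this is not merely a bookkeeping issue but a genuine obstruction. The Frobenius carries $\widetilde{\Pi}^{[s,r]}_{R,A}$ into $\widetilde{\Pi}^{[s/q,r/q]}_{R,A}$, so starting from $\vec v\in(\widetilde{\Pi}^{[s,r]}_{R,A})^m$ the iterate $T^k(-\vec v)$ is only defined on $[s/q^k,r/q^k]$; these intervals drift toward $0$ and eventually become disjoint from $[s,rq]$. No convergent combination of them can define a section over $[s,rq]$, regardless of how small you make the twist scalar. The ``slack between $[s,rq]$ and $[s,r]$'' that you invoke goes in the wrong direction for a $\varphi^a$-only scheme: to reach radii up to $rq$ one must apply $\varphi^{-a}$. (There is also a sign issue: on $M(n)$ the operator becomes $p^{-n}\varphi^a-1$ with $|p^{-n}|_p=p^n$, so the naive geometric series in $\varphi^a$ is not contracting either.) Your appeal to Proposition~\ref{proposition2.8} is separately problematic, since that result presupposes a uniform bound on the number of local generators which is not yet available at this point.

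The paper's argument, following \cite[Proposition 6.2.2]{KL15}, is genuinely two-sided. After reducing to $r/s\le q^{1/2}$, each coordinate $x_i$ of the target is split via the $A$-relative analogue of \cite[Lemma 5.2.9]{KL15} as $x_i=y_i+z_i$ with $y_i\in\widetilde{\Pi}^{[s/q,r]}_{R,A}$ and $z_i\in\widetilde{\Pi}^{[s,rq]}_{R,A}$, together with estimates on $\varphi^{-a}(y_i)$ and $\varphi^a(z_i)$ depending on an auxiliary parameter $c$. The next iterate is built from $p^{n}\varphi^{-a}(y)+p^{-n}\varphi^{a}(z)$, and the contraction constant is
\[
\varepsilon=\max\bigl\{p^{-n}c_1c^{-(q-1)r/q},\ p^{n}c_2c^{(q-1)s}\bigr\},
\]
where $c_1,c_2$ bound the matrices of $\varphi^{-a}$ and $\varphi^a$ respectively. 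The two terms pull against each other as $n$ varies, and it is the freedom to choose $c$ that makes $\varepsilon<1$ for all $n\ge N$; the $\varphi^{-a}$-half of the iteration is precisely what extends the solution out to radius $rq$, while the $\varphi^a$-half keeps it bounded down to $s$. The general interval is then handled by a bootstrap pushing the solution across overlapping subintervals via Frobenius, and the integral case $N=1$ comes from the same two-sided estimate with $c_1,c_2\le 1$, not from a one-sided bound on your matrix $A$.
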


\begin{proof}
Following \cite[Proposition 6.2.2]{KL15}, we consider the quotient $r/s\leq q^{1/2}$. Then by considering the Frobenius actions one could further assume that $r\in [1,q]$. Then we will use the notation $A_{i,j}$ and $B_{i,j}$ to denote the corresponding matrices for the actions of $\varphi^{-a}$ and $\varphi^a$ respectively. Here $A_{i,j}$ has entries in $\widetilde{\Pi}^{[s,rq]}_{R,A}$ while the latter $B_{i,j}$ has entries in $\widetilde{\Pi}^{[s/q,r]}_{R,A}$. Then we put:
\begin{align}
c_1=\sup\{\left\|.\right\|_{\alpha^{t},A}(A_{i,j})|t\in [s,rq]\},\\
c_2=\sup\{\left\|.\right\|_{\alpha^{t},A}(B_{i,j})|t\in [s/q,r]\}.	
\end{align}
The goal is to extract from any element $\mathbf{w}$ in the target module a preimage $\mathbf{v}$ which amounts to solving the following equation:
\begin{displaymath}
\varphi^a\mathbf{v}-\mathbf{v}=\mathbf{w}.	
\end{displaymath}
Strictly speaking here we need some twisted version of this equation. By using our new $c_1$ and $c_2$ one chooses a suitable integer $N$ which shares the same property as the one in \cite[Proposition 6.2.2]{KL15}. Then we adapt the argument therein to our $A$-relative setting. We start by setting the coordinate of $\mathbf{w}$ to be $(x_1,...,x_r)$ where $r$ is the rank. First we take suitable constant $c>0$ such that we have for all $n\geq N$:
\begin{displaymath}
\varepsilon:=\sup\{p^{-n}c_1c^{-(q-1)r/q},p^{n}c_2c^{(q-1)s}\}<1.	
\end{displaymath}
Then as in \cite[Proposition 6.2.2]{KL15} one seeks suitable decomposition of $x_i$ taking the form of $x_i:=y_i+z_i$ where $y_i\in \widetilde{\Pi}_{R,A}^{[s/q,r]}$ and $z_i\in \widetilde{\Pi}_{R,A}^{[s,rq]}$ such that for the well-located $t$ as \cite[Lemma 5.2.9]{KL15} with the estimates therein through the relative version of the equalities in \cite[Lemma 5.2.9]{KL15}:
\begin{align}
\left\|.\right\|_{\alpha^t,A}(\varphi^{-a}(y_i))\leq c^{-(q-1)t/q}\left\|.\right\|_{\alpha^t,A}(x_i),\\
\left\|.\right\|_{\alpha^t,A}(\varphi^{a}(z_i))\leq c^{(q-1)t}\left\|.\right\|_{\alpha^t,A}(x_i),\\	
\end{align}
with the corresponding:
\begin{displaymath}
\left\|.\right\|_{\alpha^t,A}(y_i),	\left\|.\right\|_{\alpha^t,A}(z_i) \leq \left\|.\right\|_{\alpha^t,A}(x_i).
\end{displaymath}

(Note that this is actually $A$-relative version of the \cite[Lemma 5.2.9]{KL15}, where one considers the product norm $\left\|.\right\|_{\alpha^{t},A}$.) 

Then we set as in \cite[Proposition 6.2.2]{KL15} and compute:
\begin{align}
\sum_{i=1}^rx'_i\mathbf{f}_i &:=p^n\varphi^{-a}(\sum_{i=1}^ry_i\mathbf{f}_i)+p^{-n}\varphi^{a}(\sum_{i=1}^rz_i\mathbf{f}_i),	
\end{align}

which could also be deduced from the kind of definition by using the matrices $A_{i,j}$ and $B_{i,j}$. As in \cite[Proposition 6.2.2]{KL15} we further consider the corresponding coordinate vectors $y,z,x'$ as functions of $x$. And set the following iteration $x_{(\ell+1)}:=x'(x_{(\ell)})$ for each $\ell\geq 0$. To extract the desired $\mathbf{v}$ one just sets now:
\begin{displaymath}
\mathbf{v}:=\sum_{\ell\geq 0}-p^{n}\varphi^{-a}(\sum_{i=1}^ry_i(x_{(\ell)})\mathbf{f}_i)+\sum_{i=1}^rz_i(x_{(\ell)})\mathbf{f}_i.	
\end{displaymath}
As in the usual situation and by our construction above we have that this element will converge to an desired element in the domain module since we have 
\begin{center}
$\mathbf{v}-p^{-n}\varphi^{a}\mathbf{v}=\sum_{i=1}^rx_{(0),i}\mathbf{f}_i$.
\end{center} 
Finally one could then follow the strategy in \cite[Proposition 6.2.2]{KL15} to tackle the situation in more general setting by taking some suitable
\begin{center}
 $t=\max\{r/q^{1/2},s\}$ 
\end{center}
for those radii which do not satisfy the condition as above and extract suitable preimage $\mathbf{v}$ and then show that one can put this to be the desired element. To be more precise one considers the corresponding radius $t=\max\{r/q^{1/2},s\}$ where the corresponding radii $s,r$ may not satisfy the corresponding conditions as above. Then we have $t\geq r/q^{1/2}$ which implies that $r/t\leq q^{1/2}$ in this situation, which further implies (by the previous situation) one can extract a suitable element $\mathbf{v}_0$. To finish we still need to show that this element $\mathbf{v}_0$ is a desired preimage in our situation. Actually what we know in our situation is that the corresponding element $\mathbf{v}_0$ lives in the corresponding section $M_{[t,rq]}$, where this gives rise to the solution of the equation:
\begin{center}
$\mathbf{v}-p^{-n}\varphi^{a}\mathbf{v}=\sum_{i=1}^rx_{(0),i}\mathbf{f}_i$,
\end{center}
which further gives rise to the corresponding element in the corresponding section:
\begin{displaymath}
M_{[t/q,r]}.	
\end{displaymath}
Suppose now we a priori have the corresponding situation that:
\begin{displaymath}
t/q>s,	
\end{displaymath}
then we take the corresponding intersection in this situation with respect to the pair:
\begin{displaymath}
[t/q,r],[t,rq]	
\end{displaymath}
which gives rise to the fact that:
\begin{displaymath}
v_0\in M_{[t/q,rq]}	
\end{displaymath}
where if we have $t/q=s$ then we are done, otherwise we repeat the construction above again to produce the corresponding element gives rise to the situation:
\begin{align}
v_0\in M_{[t/q^2,rq]},\\
v_0\in M_{[t/q^3,rq]},\\
...,\\
v_0\in M_{[t/q^n,rq]},...,	
\end{align}
until there is some integer $N\geq 1$ such that we have $t/q^N\leq s$ then we are done.

\end{proof}


\begin{proposition} \label{proposition1} \mbox{\bf (After Kedlaya-Liu \cite[Proposition 6.2.4]{KL15})} 
Now again working over the ring $\widetilde{\Pi}_{R,A}$ and consider a $\varphi^a$-bundle which is denoted by $M$. Then we have that there exists some positive integer $N$ such that for all $n\geq N$ one could find finitely many $\varphi^a$-invariant global sections of $M(n)$ that generate $M$.	
\end{proposition}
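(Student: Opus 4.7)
My plan is to follow the strategy of \cite[Proposition 6.2.4]{KL15}, combining \cref{proposition2.8} with the Frobenius surjectivity of \cref{prop2.9} in the $A$-relative setting. First I would fix a closed interval $I=[s,rq]$ satisfying the admissibility condition $s\leq r/q$ from \cref{Def2.6}, and use that $M_I$ is finite locally free over $\widetilde{\Pi}^{I}_{R,A}$ to extract finitely many module generators $\mathbf{e}_1,\dots,\mathbf{e}_m\in M_I$. These will be promoted into the desired $\varphi^a$-invariant sections by a small Frobenius perturbation.

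Next, with $N$ as in \cref{prop2.9}, fix $n\geq N$ so that $\varphi^a-1:M(n)_{[s,rq]}\to M(n)_{[s,r]}$ is surjective. For each $i$, the element $\mathbf{b}_i:=(\varphi^a-1)(\mathbf{e}_i)$, interpreted in $M(n)_{[s,r]}$ via the bundle Frobenius together with the twist $\varphi^a\mapsto p^{-n}\varphi^a$, admits by surjectivity a preimage $\mathbf{w}_i\in M(n)_{[s,rq]}$, i.e.\ $(\varphi^a-1)(\mathbf{w}_i)=\mathbf{b}_i$. Setting $\mathbf{v}_i:=\mathbf{e}_i-\mathbf{w}_i$ one obtains $(\varphi^a-1)(\mathbf{v}_i)=0$ on $[s,r]$, so $\mathbf{v}_i$ is $\varphi^a$-invariant there. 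I would then propagate each $\mathbf{v}_i$ to a global section of $M(n)$ by iterating the relation $\mathbf{v}_i=p^{-n}\varphi^a(\mathbf{v}_i)$ to cover the larger intervals $[q^ks,q^kr]$ (for $k\geq 0$) and $\mathbf{v}_i=p^n\varphi^{-a}(\mathbf{v}_i)$ to cover the smaller ones ($k\leq 0$), relying on the cocycle condition of the bundle for compatibility on overlaps.

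Finally, to see that $\{\mathbf{v}_1,\dots,\mathbf{v}_m\}$ generates $M$, restrict to $M_{[s,rq]}$ where $\mathbf{v}_i=\mathbf{e}_i-\mathbf{w}_i$; by taking $n$ larger still, the contraction factor $\varepsilon<1$ from the proof of \cref{prop2.9} forces $\mathbf{w}_i$ to be small relative to $\mathbf{e}_i$ in the product seminorm $\|\cdot\|_{\alpha^t,A}$, so a Nakayama-type argument in these seminorms shows that $\{\mathbf{v}_i\}$ still generates $M_{[s,rq]}$; \cref{proposition2.8} then upgrades this to generation of the global $\varphi^a$-bundle $M$. The main obstacle is precisely this Nakayama step in the $A$-relative setting: the product seminorms $\|\cdot\|_{\alpha^t,A}$ on $\widetilde{\Pi}^{[s,rq]}_{R,A}$ do not share the clean power-multiplicative behavior of the absolute case, and the smallness of $\mathbf{w}_i$ must be controlled uniformly in the $A$-variable. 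The safest remedy is to replace the single-step construction by a convergent Newton-like iteration, successively replacing the generators $\mathbf{e}_i$ by refined approximations whose Frobenius defects shrink geometrically, with limit a genuinely invariant section that still generates.
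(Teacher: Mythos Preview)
Your overall architecture is close to the paper's, but there is a genuine gap at the ``Nakayama step,'' and it is not just a technicality about the $A$-relative seminorms. Applying \cref{prop2.9} as a black box to $\mathbf{b}_i=(\varphi^a-1)\mathbf{e}_i$ gives no control over \emph{which} preimage $\mathbf{w}_i$ you obtain; in fact $\mathbf{w}_i=\mathbf{e}_i$ is itself a preimage, yielding $\mathbf{v}_i=0$. Even if you use the explicit iteration from the proof of \cref{prop2.9}, the target $\mathbf{b}_i$ is not small: in $M(n)$ the operator is $p^{-n}\varphi^a-1$, so $\mathbf{b}_i=p^{-n}\varphi^a\mathbf{e}_i-\mathbf{e}_i\to -\mathbf{e}_i$ as $n\to\infty$, and the explicit preimage produced by that iteration has norm comparable to $\|\mathbf{b}_i\|$, hence to $\|\mathbf{e}_i\|$. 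Enlarging $n$ shrinks the contraction factor $\varepsilon$, i.e.\ speeds up the convergence of the series, but does not shrink the size of the limit relative to its input. So the claim that $\mathbf{w}_i$ is small relative to $\mathbf{e}_i$ fails, and with it the generation argument.

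The paper avoids this by \emph{not} perturbing arbitrary generators. Instead it fixes a basis $\mathbf{f}_1,\dots,\mathbf{f}_r$ on a short interval with $s=rq^{-1/2}$, picks $\overline{\pi}$ with $\alpha(\overline{\pi})\in(0,1)$ and a suitable exponent $z$ so that $[\overline{\pi}^z]$ realizes the scaling constant $c$ from the proof of \cref{prop2.9}, and then runs the \emph{same} iteration from that proof but with the handcrafted initial data $x_{(0)}=0$ and $(y_{(0),j},z_{(0),j})=(-[\overline{\pi}^{z}],[\overline{\pi}^{z}])$ for $j=i$ and zero otherwise. The first step of the series then contributes exactly $[\overline{\pi}^{z}]\mathbf{f}_i$, and the contraction estimate bounds all later contributions by $\varepsilon\cdot\alpha(\overline{\pi})^{zt}$. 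One obtains $\mathbf{v}'_i=[\overline{\pi}^{z}]\mathbf{f}_i+\sum_jM_{ij}\mathbf{f}_j$ with $\|M_{ij}\|_{\alpha^t,A}\leq\varepsilon\,\alpha(\overline{\pi})^{zt}$, so the change-of-basis matrix $[\overline{\pi}^{z}]\mathbf{1}+M$ is invertible over $\widetilde{\Pi}^{[rq^{-1/2},r]}_{R,A}$; repeating on $[rq^{-1},rq^{-1/2}]$ gives generation on $[rq^{-1},r]$, and Frobenius propagation finishes. The Teichm\"uller seed is what makes the ``leading term is a unit, tail is small'' dichotomy work; your Newton-style hedge is pointing in the right direction, but without this specific initialization it will not get off the ground.
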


\begin{proof}
We follow \cite[Proposition 6.2.4]{KL15}. In our generalized setting, first we take $s$ to be $rq^{-1/2}$. Then as in the situation of \cite[Proposition 6.2.4]{KL15} one chooses an element $\overline{\pi}$ whose norm is inside $(0,1)$. Then we consider the corresponding product which we will denote it by $\overline{\pi}$, and choose suitable rational number $z\in \mathbb{Z}[1/p]$ such that $[\overline{\pi}^{z}]$ gives rise to the value $c$ in the proof of the previous proposition. Then also as in the previous proposition one constructs (using $\varphi^a$ instead) the corresponding element $\mathbf{v}$ but in our situation we will denote this by $\mathbf{v}'_i$ for each $i$ suitable. To be more precise from the data above we put $\mathbf{v}'_i$ as the definition of $\mathbf{v}$ by using $x_{(0)}=0$ and $(y_{(0),j},z_{(0),j})=(-[\overline{\pi}^{s}],[\overline{\pi}^{s}])$ for $j=i$ and zero identically otherwise.\\
\indent Then we have the corresponding set of elements $\{\mathbf{v}'_i\}$ in $M_{[rq^{-1/2},rq]}$ consisting those elements which are invariant under the action of $\varphi^a$. We repeat the argument in the proof of the previous proposition as in the following. First one uses the notation $X_{i,j}$ to denote the corresponding matrix of the operator $\varphi^{-a}$ (certainly under some chosen set of generators with the specific rank $r$ in our situation) while one uses the notation $Y_{i,j}$ to denote the corresponding matrix of the operator $\varphi^a$, where from the corresponding matrix elements one could compute the following two numbers:
\begin{align}
b_1:=\mathrm{sup}_t\{\left\|.\right\|_{\alpha^t,A}(X_{i,j}),i,j\in \{1,2,...,r\},t\in [s,rq]\}\\
b_2:=\mathrm{sup}_t\{\left\|.\right\|_{\alpha^t,A}(Y_{i,j}),i,j\in \{1,2,...,r\},t\in [s/q,r]\}.	
\end{align}
Now by using our new scalars $b_1,b_2$ one could have the chance to extract some suitable $N\geq 1$ as in our previous proposition and in the corresponding result of \cite[Proposition 6.2.2]{KL15}. Then we consider the following construction through the corresponding iterated induction as in the following. The corresponding initial coordinates have been constructed in our situation as mentioned in the previous paragraph. Then we consider some general coordinate $(x_1,...,x_r)$ for $r$ the rank. Then as in \cite[Lemma 5.2.9]{KL15} for each $i=1,...,r$ one can make the following decomposition, namely putting $x_i=y_i+z_i$ with the following estimate with some chosen constant $c>0$:
\begin{align}
\left\|.\right\|_{\alpha^t,A}(\varphi^{-a}(y_i))\leq c^{-(q-1)t/q}\left\|.\right\|_{\alpha^t,A}(x_i),\\
\left\|.\right\|_{\alpha^t,A}(\varphi^{a}(z_i))\leq c^{(q-1)t}\left\|.\right\|_{\alpha^t,A}(x_i),\\	
\end{align}
and:
\begin{displaymath}
\left\|.\right\|_{\alpha^t,A}(y_i),	\left\|.\right\|_{\alpha^t,A}(z_i) \leq \left\|.\right\|_{\alpha^t,A}(x_i).
\end{displaymath}
This will be the key step in the iterating process, while the scalar $c$ will be chosen to guarantee that we have the following situation:
\begin{displaymath}
\delta:=\mathrm{max}\{p^{-n}b_1c^{-(q-1)r/q},p^{n}b_2c^{(q-1)s}\}<1.
\end{displaymath}
Then the corresponding iterating process will be further conducted in the following way, namely we consider the corresponding (well-defined due to our construction above):
\begin{displaymath}
\sum_{i=1}^rx_i'\mathbf{f}_i:=p^{n}\varphi^{-a}	\sum_{i=1}^ry_i\mathbf{f}_i+p^{-n}\varphi^{a}	\sum_{i=1}^rz_i\mathbf{f}_i.
\end{displaymath}
Note here that from each $x_i,i=1,...,r$ one can produce the corresponding elements $x_i',i=1,...,r$ which further produces the corresponding elements $y_i,z_i,i=1,...,r$, which implies that one can regard the corresponding elements $y_i,z_i,i=1,...,r$ as functions $y_i(x_1,...,x_r)$, $z_i(x_1,...,x_r),i=1,...,r$ of the original elements $x_i,i=1,...,r$. The whole iterating process is conducted through putting $x_{(\ell+1)}:=x'(x_{(\ell)})$ for each $\ell=0,1,...$, which produces the desired new elements by considering the following series:
\begin{displaymath}
\mathbf{v}'_i:=\sum_{\ell\geq 0}-p^{n}\varphi^{-a}	(\sum_{i=1}^ry_i(x_{(\ell)})\mathbf{f}_i) +\sum_{i=1}^rz_i(x_{(\ell)})\mathbf{f}_i.	
\end{displaymath}
As in our previous situation this process produces the desired elements which are invariant under the Frobenius operators in the sense that:
\begin{displaymath}
\mathbf{v}'_i-p^{-n}\varphi^a\mathbf{v}'_i=\sum_{i=0}^rx_{(0),i}\mathbf{f}_i.	
\end{displaymath}
Then we write $\mathbf{v}'_i$ as $[\overline{\pi}^{s}]\mathbf{f}_i+M\mathbf{f}$ with some matrix $\{M_{i,j}\}$ satisfying the condition that $\left\|.\right\|_{\alpha^{t},A}(M_{i,j})\leq \varepsilon \alpha(\overline{\pi})^{st}$ for each $t\in [rq^{-1/2},r]$. Therefore up to here we conclude that the matrix $[\overline{\pi}^s]+M$ will then consequently be invertible over the period ring $\widetilde{\Pi}_{R,A}^{[rq^{-1/2},r]}$ which implies the corresponding finite generating property for the interval $[rq^{-1},r]$ (by further repeating the corresponding construction above for the interval $[rq^{-1},rq^{-1/2}]$). Then to finish one could then iterate as in \cite[Proposition 6.2.4]{KL15} and apply the Frobenius action from $\varphi^a$.  Note that this will rely on the analog in our situation of \cite[Lemma 5.3.4]{KL15} in the $A$-relative setting (in our setting this could be resolved by considering the product Banach norms on the period rings we defined in this restricted setting, or by considering the corresponding norms on the period rings in the usual sense and $A$).
\end{proof}

\indent Then we consider the Frobenius invariants acting on an exact sequence, generalizing the corresponding result from \cite[Corollary 6.2.3]{KL15}:

\begin{corollary} \label{Corol2.12}
Suppose now we have an exact sequence of $\varphi^a$-modules over the corresponding period ring $\widetilde{\Pi}_{R,A}$ taking the form of $0\rightarrow M_\alpha\rightarrow M \rightarrow M_\beta\rightarrow 0$. Then there exists a positive integer $N$ such that for all $n\geq N$ we have the following exact sequence:
\[
\xymatrix@R+0pc@C+0pc{
0\ar[r]\ar[r]\ar[r] &M_\alpha(n)^{\varphi^a=1} \ar[r]\ar[r]\ar[r] &M(n)^{\varphi^a=1}
\ar[r]\ar[r]\ar[r] &M_\beta(n)^{\varphi^a=1}
\ar[r]\ar[r]\ar[r] &0
}.
\]
	
\end{corollary}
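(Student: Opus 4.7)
The plan is to mimic the argument of \cite[Corollary 6.2.3]{KL15} in the $A$-relative setting by combining the snake lemma with the surjectivity result of \cref{prop2.9}.

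First I would use the tensor equivalence between $\varphi^a$-modules over $\widetilde{\Pi}_{R,A}$ and $\varphi^a$-bundles (the lemma just before \cref{prop2.9}) to pass from $M_\alpha,M,M_\beta$ to their associated $\varphi^a$-bundles, and fix a pair of closed intervals $[s,r]\subset[s,rq]$ with $s\leq r/q$. Since the sections of any $\varphi^a$-bundle over a closed subinterval form a finite locally free (hence flat) module over the corresponding $A$-relative Robba ring, tensoring the exact sequence $0\to M_\alpha\to M\to M_\beta\to 0$ with either $\widetilde{\Pi}^{[s,r]}_{R,A}$ or $\widetilde{\Pi}^{[s,rq]}_{R,A}$ preserves exactness. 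This yields a commutative diagram with exact rows whose vertical maps are all given by $\varphi^a-1$ (regarded as maps from sections on $[s,rq]$ to sections on $[s,r]$ in the sense of \cref{prop2.9}).

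Next, I would apply \cref{prop2.9} to $M_\alpha$ to extract an integer $N$ such that for all $n\geq N$ the map $\varphi^a-1\colon M_{\alpha,[s,rq]}(n)\to M_{\alpha,[s,r]}(n)$ is surjective. The snake lemma applied to the above diagram then produces the six-term exact sequence
\[
0\to \ker_\alpha\to \ker_M\to \ker_\beta\to \operatorname{coker}_\alpha\to\operatorname{coker}_M\to \operatorname{coker}_\beta\to 0,
\]
where $\ker_\bullet$ and $\operatorname{coker}_\bullet$ denote the kernel and cokernel of the corresponding vertical $\varphi^a-1$. Surjectivity of $\varphi^a-1$ on $M_\alpha(n)$ forces $\operatorname{coker}_\alpha=0$, which kills the connecting map and gives the exactness claimed in the corollary, provided we can identify $\ker_\bullet$ with the global $\varphi^a$-invariants $\bullet(n)^{\varphi^a=1}$ of the corresponding $\varphi^a$-module.

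The main (mild) obstacle is precisely this last identification: a section $v\in M_{[s,rq]}(n)$ with $\varphi^a(v)=v$ on the overlap $[s,r]$ must be extended uniquely to a $\varphi^a$-invariant global bundle section, which is done by iterating $\varphi^a$ to enlarge the interval of definition and using the cocycle condition on the bundle's gluing isomorphisms from \cref{Def2.6}. In the $A$-relative setting one simply needs to verify that the product norms $\|\cdot\|_{\alpha^t,A}$ governing the topology on $\widetilde{\Pi}^I_{R,A}$ transform under $\varphi^a$ the same way as in the absolute case, so that the iterated Frobenius extension converges in every $\widetilde{\Pi}^I_{R,A}$; this is routine since the $A$-factor is Frobenius-invariant by the convention set up before \cref{proposition2.8}. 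Once this identification is in place the snake exactness gives precisely the short exact sequence of $\varphi^a$-invariants that is to be proved.
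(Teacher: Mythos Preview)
Your proposal is correct and follows exactly the approach the paper intends: the paper's own proof is the single sentence ``This is a direct consequence of the previous \cref{prop2.9},'' and what you have written is precisely the unpacking of that sentence via the snake lemma applied to $\varphi^a-1$ on the bundle sections, with \cref{prop2.9} supplying the vanishing of $\operatorname{coker}_\alpha$ for large $n$.
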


\begin{proof}
This is a direct consequence of the previous \cref{prop2.9}.	
\end{proof}

\indent The finite projective objects are considered in the results above, actually one can naturally extend the discussion above to the setting of pseudocoherent objects. Following \cite[4.6.9]{KL16} we consider the following results around the finite generated objects:

\begin{proposition} \label{propo2.13}
Let $M$ be a Frobenius module defined over $\widetilde{\Pi}_{R,A}$ (namely endowed with a semilinear action of the Frobenius operator) which is assumed to be finitely generated. Then we have that one can find then an integer $N\geq 0$ such that for all $n\geq N$, $H^0_{\varphi^a}(M(n))$ generates the module $M$ itself and we have that the $H^1_{\varphi^a}(M(n))$ vanishes. Also let $M$ be a Frobenius module defined over $\widetilde{\Pi}_{R,A}$ (namely endowed with a semilinear action of the Frobenius operator) which is assumed to be finite projective. Then we have that one can find then an integer $N\geq 0$ such that for all $n\geq N$, $H^0_{\varphi^a}(M(n))$ generates the module $M$ itself and we have that the $H^1_{\varphi^a}(M(n))$ vanishes. 
\end{proposition}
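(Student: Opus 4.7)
The plan is to handle the finite projective case first by directly assembling Proposition~\ref{proposition1} (generation by Frobenius-invariant sections) with Proposition~\ref{prop2.9} (surjectivity of $\varphi^a - 1$ on the twisted bundle), and then to reduce the finitely generated case to the finite projective one via a Frobenius-equivariant presentation together with the long exact sequence in Frobenius cohomology that underlies Corollary~\ref{Corol2.12}.

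For the finite projective case, the generation statement is exactly the content of Proposition~\ref{proposition1}: for $n \geq N$ sufficiently large, finitely many elements of $H^0_{\varphi^a}(M(n)) = M(n)^{\varphi^a = 1}$ generate $M$ as a $\widetilde{\Pi}_{R,A}$-module. For the vanishing of $H^1_{\varphi^a}(M(n))$, which I interpret as the cokernel of $\varphi^a - 1$ acting on $M(n)$, I would realize $M$ as a $\varphi^a$-bundle $\{M_I\}_I$ in the sense of Definition~\ref{Def2.6} and apply Proposition~\ref{prop2.9}, which supplies surjectivity of $\varphi^a - 1: M_{[s, rq]}(n) \to M_{[s, r]}(n)$ for $n \geq N$ and every admissible closed interval. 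To promote this to global surjectivity on $M(n)$, given $\mathbf{w} \in M(n)$ I would solve the equation locally on each interval and assemble the preimages into a coherent global section by invoking the finite generation of global sections supplied by Proposition~\ref{proposition2.8} together with the bundle compatibility maps $\psi_{I', I}$.

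For the finitely generated case, I would first use Proposition~\ref{proposition1} to pick $\varphi^a$-invariant generators $v_1, \ldots, v_r$ of $M(n_0)$ for some large $n_0$, and form a free Frobenius module $F$ of rank $r$ with a diagonal Frobenius structure chosen so that the map $e_i \mapsto v_i$ becomes Frobenius-equivariant, giving a short exact sequence $0 \to K \to F \to M \to 0$ of $\varphi^a$-modules. The generation assertion for $M$ is then inherited from that for $F$ via the surjection. For the cohomological vanishing, the long exact sequence attached to the two-term complex $[\,\cdot \xrightarrow{\varphi^a - 1} \cdot\,]$ terminates at degree one, so the finite projective conclusion $H^1_{\varphi^a}(F(n)) = 0$ for $n$ large, combined with exactness, forces $H^1_{\varphi^a}(M(n)) = 0$ without needing to control the kernel $K$.

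The step I expect to be the main obstacle is the globalization within the finite projective case: Proposition~\ref{prop2.9} delivers local preimages $\mathbf{v}_{[s, rq]}$ only modulo $\varphi^a$-invariants of $M_{[s, rq]}$, so making a compatible system across overlapping intervals is not automatic. The most promising remedy is to correct the local preimages iteratively using the finite set of global $\varphi^a$-invariant generators furnished by Proposition~\ref{proposition1}, or alternatively to rerun the approximation scheme that proves Proposition~\ref{prop2.9} directly at the level of global sections, using the convergence estimates in the spirit of the proof of Proposition~\ref{proposition2.8} to control the result in each of the family of norms $\|\cdot\|_{\alpha^t, A}$ simultaneously.
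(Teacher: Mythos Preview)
Your finite projective case is correct and matches the paper: assembling Proposition~\ref{proposition1} for generation and Proposition~\ref{prop2.9} for the vanishing of $H^1$ is exactly the intended strategy, and the globalization you flag is handled as you suggest, by rerunning the approximation at the level of global sections.

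The finitely generated case, however, contains a circularity. You invoke Proposition~\ref{proposition1} on $M$ itself to produce $\varphi^a$-invariant generators $v_1,\ldots,v_r$, but that proposition is stated for $\varphi^a$-bundles, i.e., for objects that are finite locally free on each interval; a merely finitely generated $M$ carries no such structure, and the existence of $\varphi^a$-invariant generators of $M(n_0)$ is precisely the generation half of the statement you are proving. If instead you try to build the equivariant surjection from arbitrary generators by writing $\varphi^a(v_i)=\sum_j B_{ij}v_j$ and declaring $B$ to be the Frobenius matrix on $F=\widetilde{\Pi}_{R,A}^r$, the resulting semilinear endomorphism of $F$ need not be bijective, so $F$ is not a $\varphi^a$-module in the sense of Definition~\ref{Def2.6} and the finite projective case does not apply to it. Your downstream reduction (generation via the surjection, $H^1$-vanishing via the long exact sequence and $H^2=0$) is sound, but it rests on having such an $F$.

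The paper's route, following \cite[4.6.9]{KL16}, avoids this by not reducing at all: one reruns the iteration of Propositions~\ref{prop2.9} and~\ref{proposition1} directly for finitely generated $M$. Choose any generators, express both $\varphi^a$ and $\varphi^{-a}$ by (non-unique, generally non-invertible) matrices with respect to them, and observe that the convergence of the iteration depends only on the finiteness of the sup-norms $c_1,c_2$ of these matrix entries, never on their invertibility. The same scheme then yields both the surjectivity of $\varphi^a-1$ on $M(n)$ and the $\varphi^a$-invariant elements; those elements still generate because the matrix $[\overline{\pi}^s]I + (\text{small})$ expressing them in the chosen generators is invertible as before, and invertibility of that coefficient matrix suffices to recover the generators even when they are not a basis.
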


\begin{proof}
In this situation one can follow the proof of \cite[4.6.9]{KL16} and the strategy in the proof of the previous two propositions.	
\end{proof}

\newpage

\section{Contact with Schematic Relative Fargues-Fontaine Curves}

\subsection{Rigid Analytic Deformation of Schematic Relative Fargues-Fontaine curves}

\noindent In this section, we study some relationship between the vector bundles over Fargues-Fontaine curves and the generalized Frobenius modules defined in the previous section. This is relative version of the established results essentially in \cite{KL15}. The original picture could be dated back to the work of Fargues-Fontaine, where they established the classification of the Galois equivariant vector bundles over the so-called Fargues-Fontaine curves. For the convenience of the reader we first recall the definition of the scheme $\mathrm{Proj}P$ in the current picture.

\begin{setting}
Recall some algebraic geometry from \cite[Definition 6.3.1]{KL15} that first the ring $P$ is a graded commutative ring taking the form of $\bigoplus_{n\geq 0}P_n$ where each $P_{n}$ is defined to be a subring of $\widetilde{\Pi}^+_{R}$ 
which consists of all the elements which is invariant under the Frobenius $\varphi^a$ (up to scalars $p^n$ for each $n$). Then for each element $f$ of degree $d>0$ in $P_d$ we consider the local affine scheme $\mathrm{Spec}(P[1/f]_0)$, then we glue these to get the projective spectrum of $P$ as in \cite[Definition 6.3.1]{KL15} which is denoted by $\mathrm{Proj}P$. For more algebraic geometric discussion see \cite[Definition 6.3.1]{KL15}. 
\end{setting}

\indent Then we could generalize the situation in \cite[Definition 6.3.1]{KL15} to our $A$-relative situation.

\begin{setting}
In our situation, we consider the ring $\widetilde{\Pi}_{R,A}^+$ or the ring $\widetilde{\Pi}_{R,A}$ which is used to defined the corresponding graded ring as in the previous setting (namely, taking the corresponding $\varphi^a=p^n$ invariants for each $n\geq 0$), which will be denoted by $P_{R,A}$ or even just $P_A$.	To be slightly more precise $P_{R,A}$ is defined to be the corresponding direct sum $\bigoplus_{n\geq 0}P_{n,A}$.
\end{setting}

\begin{setting}
First we are going to use the notation $f$ to denote both the corresponding elements with specific degrees $d$ in $P_{d}$ and the corresponding elements in the tensor product of the local affine rings. And then we consider now the following invariance for any $M$, which is any $\varphi^a$-bundle over $\widetilde{\Pi}_{R,A}$. The invariance $M_{f}$ is now defined in our setting as $M[1/f]^{\varphi^a}$. To be more explicit we have $M_{f}=\bigcup_{n\geq 0}f^{-n}M(dn)^{\varphi^a}$. As in the usual situation one could regard this as a module over the tensor product $P_A[1/f]_0$, which then for any $I$ an interval induces the following map:

\begin{displaymath}
M_{f}\otimes_{P_A[1/f]_0}\widetilde{\Pi}^{I}_{R,A}[1/f]\rightarrow M_{f}\otimes_{\widetilde{\Pi}^{I}_{R,A}}\widetilde{\Pi}^{I}_{R,A}[1/f].	
\end{displaymath}
\end{setting}

\indent To study more, we first consider the following corollary after Kedlaya-Liu:

\begin{corollary}
Consider an arbitrary exact sequence of $\varphi^a$-modules taking the form of
\[
\xymatrix@R+0pc@C+0pc{
0\ar[r]\ar[r]\ar[r] &M_A \ar[r]\ar[r]\ar[r] &M
\ar[r]\ar[r]\ar[r] &M_B
\ar[r]\ar[r]\ar[r] &0.
}
\]
Then we have the derived corresponding exact sequence:
\[
\xymatrix@R+0pc@C+0pc{
0\ar[r]\ar[r]\ar[r] &M_{A,f} \ar[r]\ar[r]\ar[r] &M_{f}
\ar[r]\ar[r]\ar[r] &M_{B,f}
\ar[r]\ar[r]\ar[r] &0.
}
\]
			
\end{corollary}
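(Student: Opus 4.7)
The plan is to reduce the assertion to \cref{Corol2.12} by viewing each $M_{\square,f}$ (for $\square \in \{A,\emptyset,B\}$) as a filtered colimit of $\varphi^a$-invariants of the $(dn)$-twists of $M_\square$.

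The first step is to unwind the identification $M_f = \bigcup_{n\geq 0} f^{-n} M(dn)^{\varphi^a}$ recorded in the setting preceding the statement, and the analogous formulas for $M_{A,f}$ and $M_{B,f}$. Since $f \in P_d$ satisfies $\varphi^a(f) = p^d f$, multiplication by $f$ carries $M(dn)^{\varphi^a=1}$ into $M(d(n+1))^{\varphi^a=1}$, making these unions into filtered colimits
\begin{align*}
M_{\square,f} \;=\; \varinjlim_{n} \; f^{-n}\, M_\square(dn)^{\varphi^a=1}.
\end{align*}
The three colimit systems are compatibly connected because the morphisms $M_A \hookrightarrow M$ and $M \twoheadrightarrow M_B$ are $\varphi^a$-equivariant and commute with multiplication by $f$.

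The second step is to apply \cref{Corol2.12} to the given exact sequence $0 \to M_A \to M \to M_B \to 0$ of $\varphi^a$-modules. It furnishes some $N \geq 1$ such that for every $n \geq N$ the levelwise sequence
\begin{align*}
0 \to M_A(dn)^{\varphi^a=1} \to M(dn)^{\varphi^a=1} \to M_B(dn)^{\varphi^a=1} \to 0
\end{align*}
is short exact. Since filtered colimits are exact in the category of $P_A[1/f]_0$-modules, passing to the colimit over the cofinal subsystem $\{n \geq N\}$ produces the asserted short exact sequence $0 \to M_{A,f} \to M_f \to M_{B,f} \to 0$. No information is lost by restricting to $n \geq N$: any representative $f^{-n}\bar{w}$ with $n < N$ can be rewritten as $f^{-(n+k)}(f^k \bar{w})$ to push its index past the threshold, and the rewritten element lies in $M_B(d(n+k))^{\varphi^a=1}$ because $\varphi^a(f^k\bar w) = p^{dk} f^k \bar w$.

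The only genuinely nontrivial input is the right-most surjectivity $M_f \twoheadrightarrow M_{B,f}$: injectivity of $M_{A,f} \hookrightarrow M_f$ and the inclusion $\mathrm{image} \subseteq \ker$ in the middle hold levelwise for every $n$ without any twist hypothesis, since taking $\varphi^a$-invariants is the left-exact functor $\ker(\varphi^a - 1)$. Thus the entire obstacle is lifting a $\varphi^a$-invariant section from $M_B$ to one from $M$, which is exactly the surjectivity statement of \cref{Corol2.12}; this in turn rests on the approximation argument of \cref{prop2.9}. Consequently no new estimates are required, and the corollary follows by assembling the ingredients above.
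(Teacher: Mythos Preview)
Your proof is correct and is precisely the intended argument: the paper's own proof is a one-line pointer (``Just consider the structure of $M_f$, then one could derive the results from our previous discussion''), and your write-up is the natural unpacking of that sentence via the colimit description $M_f=\varinjlim_n f^{-n}M(dn)^{\varphi^a=1}$ together with \cref{Corol2.12}. There is no substantive difference in approach.
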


\begin{proof}
Just consider the structure of $M_{f}$, then one could derive the results from our previous discussion.	
\end{proof}

\indent Then we have the following relative version of \cite[Corollary 6.3.4]{KL15}:

\begin{lemma}
Let $M$ be a Frobenius bundle as above, and take $M_1$ and $M_2$ two Frobenius subbundles whose summation is also a subobject. Then we actually have an equality $(M_1+M_2)_f\overset{\sim}{\rightarrow} M_{1,f}+M_{2,f}$.
\end{lemma}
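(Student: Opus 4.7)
The plan is to reduce the claim to the surjectivity of the Frobenius-invariants functor applied to the canonical short exact sequence relating $M_1$, $M_2$ and their sum, via \cref{Corol2.12}. The inclusion $M_{1,f}+M_{2,f}\subseteq (M_1+M_2)_f$ is immediate from the definition $M_f=\bigcup_{n\geq 0}f^{-n}M(dn)^{\varphi^a=1}$: each $M_i$ sits inside $M_1+M_2$ as a Frobenius subbundle, so every element of $M_i(dn)^{\varphi^a=1}$ lies in $(M_1+M_2)(dn)^{\varphi^a=1}$, and inverting $f$ preserves the containment.

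For the reverse inclusion, I would consider the short exact sequence of Frobenius bundles
\[
0 \longrightarrow M_1\cap M_2 \longrightarrow M_1 \oplus M_2 \xrightarrow{(v_1,v_2)\mapsto v_1+v_2} M_1+M_2 \longrightarrow 0,
\]
in which $M_1\cap M_2$ inherits its Frobenius structure as a $\varphi^a$-stable subobject of either summand, while the middle and right terms are Frobenius bundles by hypothesis. Applying the bundle analogue of \cref{Corol2.12} (which follows from \cref{prop2.9} by the same vanishing argument in the relative setting) to this sequence twisted by $dn$, one obtains an integer $N\geq 0$ such that for every $n\geq N$ the summation map
\[
M_1(dn)^{\varphi^a=1}\oplus M_2(dn)^{\varphi^a=1}\twoheadrightarrow (M_1+M_2)(dn)^{\varphi^a=1}
\]
is surjective.

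To finish, let $v\in (M_1+M_2)_f$ and write $v=f^{-m}u$ with $u\in (M_1+M_2)(dm)^{\varphi^a=1}$. Since $f\in P_d$ satisfies $\varphi^a(f)=p^d f$, multiplying by $f^k$ gives $f^k u\in (M_1+M_2)(d(m+k))^{\varphi^a=1}$; choose $k$ large enough that $m+k\geq N$ and decompose $f^k u=u_1+u_2$ with $u_i\in M_i(d(m+k))^{\varphi^a=1}$ using the surjectivity above. Then
\[
v=f^{-(m+k)}u_1+f^{-(m+k)}u_2\in M_{1,f}+M_{2,f},
\]
which gives the desired equality.

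The main obstacle I anticipate is the technical point of verifying that $M_1\cap M_2$ qualifies as a legitimate Frobenius bundle in the sense of \cref{Def2.6} (finite local freeness on models with a compatible $\varphi^a$-action), so that \cref{Corol2.12} literally applies to the three-term sequence above. Finite generation of the intersection should follow from the ambient finite generation of $M_1,M_2$ together with \cref{propo2.13}, but if this proves awkward a cleaner workaround is to use the two-step sequence $0\to M_1\to M_1+M_2\to (M_1+M_2)/M_1\to 0$ and to identify the quotient with $M_2/(M_1\cap M_2)$, lifting a given invariant modulo $M_1$ to some $v_2\in M_2(dn)^{\varphi^a=1}$ and writing $v=(v-v_2)+v_2$ with $v-v_2\in M_1(dn)^{\varphi^a=1}$ after the same $f^k$-clearing trick.
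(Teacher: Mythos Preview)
Your argument is correct and is essentially the approach the paper has in mind: the paper's own proof is only a one-line reference to \cite[Corollary 6.3.4]{KL15}, and the underlying mechanism there is exactly the exactness of $(-)_f$ on the short exact sequence $0\to M_1\cap M_2\to M_1\oplus M_2\to M_1+M_2\to 0$, which is what you unwind via \cref{Corol2.12}. A minor streamlining: the unnamed Corollary immediately preceding this lemma already records that $(-)_f$ takes short exact sequences of $\varphi^a$-modules to short exact sequences, so you can cite that directly to obtain the surjection $M_{1,f}\oplus M_{2,f}\twoheadrightarrow (M_1+M_2)_f$ rather than redoing the $f^k$-clearing step by hand; your concern about the bundle status of $M_1\cap M_2$ is the same hypothesis implicitly used there and in \cite{KL15}.
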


\begin{proof}
This is basically an $A$-relative version of the corresponding result in \cite[Corollary 6.3.4]{KL15}.	
\end{proof}

\indent Then we consider the following key proposition for our further study:

\begin{proposition} \mbox{\bf (After Kedlaya-Liu \cite[Theorem 6.3.9]{KL15})} 
Now let $M$ again be a $\varphi^a$-bundle over $\widetilde{\Pi}_{R,A}$. Then we have that
\begin{displaymath}
M_{f}\otimes_{P_A[1/f]_0}\widetilde{\Pi}^{I}_{R,A}[1/f]\rightarrow M_{f}\otimes_{\widetilde{\Pi}^{I}_{R,A}}\widetilde{\Pi}^{I}_{R,A}[1/f]	
\end{displaymath}  
is a bijection, and $M_{f}$ is finite projective over $P_A[1/f]_0$.
\end{proposition}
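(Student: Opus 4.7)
The plan is to adapt the argument of \cite[Theorem 6.3.9]{KL15} to the $A$-relative setting, using the generation result \cref{proposition1} in place of its absolute counterpart, together with \cref{Corol2.12} and \cref{proposition2.8} to handle exactness and finite generation uniformly in the interval parameter $I$. The target of the displayed map is understood as $M[1/f]$ viewed via its local sections on $I$.

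First I would establish surjectivity. By \cref{proposition1} applied to $M$, for some large $n$ divisible by $d$ there exist $\varphi^a$-invariant global sections $v_1,\ldots,v_m$ of $M(n)$ that generate $M$ as a $\widetilde{\Pi}_{R,A}$-module. Each $v_i$ yields an element $w_i:=f^{-n/d}v_i$ lying in $f^{-n/d}M(n)^{\varphi^a}\subseteq M_f$. After base change along $P_A[1/f]_0\to \widetilde{\Pi}^I_{R,A}[1/f]$, the images $w_i\otimes 1$ generate the target because the $v_i$ already generate $M$ over $\widetilde{\Pi}_{R,A}$; since these images clearly factor through the source of the displayed map, surjectivity follows.

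For finite projectivity of $M_f$ over $P_A[1/f]_0$, I would form the tautological surjection $(P_A[1/f]_0)^{\oplus m}\twoheadrightarrow M_f$ sending the standard basis to the $w_i$. Tensoring along $P_A[1/f]_0\to \widetilde{\Pi}^I_{R,A}[1/f]$ and combining with the surjectivity above, one sees that the induced surjection of finite locally free $\widetilde{\Pi}^I_{R,A}[1/f]$-modules has finite locally free kernel $K_I$, and the cocycle compatibility of these kernels as $I$ varies equips $K$ with a natural $\varphi^a$-bundle structure. Applying \cref{proposition1} to $K$ after a further twist yields finitely many $\varphi^a$-invariants of $K$ which generate it, and lifting them through the free module produces a splitting of the original surjection; hence $M_f$ is a direct summand of a finite free $P_A[1/f]_0$-module, thus finite projective. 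Injectivity of the displayed map then follows from finite projectivity on both sides and \cref{Corol2.12}: once the source and target are identified as finite projective modules over $\widetilde{\Pi}^I_{R,A}[1/f]$ generated by the same $\varphi^a$-invariant elements $w_i\otimes 1$, any kernel element must come from a $\varphi^a$-invariant relation among the $w_i$, and such a relation already vanishes in $M_f$.

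The main obstacle I anticipate is the finite projectivity step, specifically verifying that the kernel $K$ of the surjection from a finite free $P_A[1/f]_0$-module inherits a bona fide $\varphi^a$-bundle structure over $\widetilde{\Pi}_{R,A}$ in the $A$-relative setting. One must check that the family $\{K_I\}$ of finite locally free kernel models glues coherently, satisfies the cocycle condition of \cref{Def2.6}, and is compatible with the Frobenius pullback isomorphisms uniformly in $I$. In the absolute setting of \cite[Chapter 6]{KL15} this compatibility is automatic, but in the presence of the affinoid deformation parameter $A$ one must be careful about how norms and completed tensor products interact, and may have to invoke \cref{proposition2.8} to transfer finite generation uniformly from the local models to global sections.
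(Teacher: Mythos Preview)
Your surjectivity step matches the paper's approach: both use \cref{proposition1} to produce $\varphi^a$-invariant generators of $M(dn)$ and divide by $f^n$. However, your finite projectivity and injectivity arguments diverge from the paper and contain a genuine gap.

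The paper does \emph{not} work with the kernel of $(P_A[1/f]_0)^{\oplus m}\twoheadrightarrow M_f$ directly. Instead it builds the surjection at the level of $\varphi^a$-bundles over $\widetilde{\Pi}_{R,A}$ itself (not the localization), obtaining a short exact sequence $0\to M_2\to M_1\to M\to 0$ with $M_1$ a twist of a free bundle. This sidesteps entirely the obstacle you flag in your last paragraph: $M_2$ is automatically a $\varphi^a$-bundle, so no gluing of local kernels over $\widetilde{\Pi}^I_{R,A}[1/f]$ is needed. Bijectivity then follows from the five lemma applied to the evident commutative diagram comparing $(\cdot)_f\otimes \widetilde{\Pi}^I_{R,A}[1/f]$ with $(\cdot)\otimes \widetilde{\Pi}^I_{R,A}[1/f]$ along this sequence, using that the middle vertical arrow (for the free bundle $M_1$) is trivially an isomorphism.

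The real gap is in your splitting step. You write that generators of $K$ ``lift through the free module to produce a splitting of the original surjection'', but generators of a submodule $K$ are already elements of the free module; having them does not produce a section of $M_f$ back into the free module. What is needed is that the \emph{extension class} vanishes. The paper achieves this by a push-out trick: choose $N$ large so that $H^1_{\varphi^a}(M^\vee\otimes M_2(dN))=0$ (via \cref{prop2.9}), push out $0\to M_2\to M_1\to M\to 0$ along the inclusion $M_2\hookrightarrow f^{-N}M_2$, and observe that the resulting extension $0\to f^{-N}M_2\to M_1'\to M\to 0$ splits. Since $M_2\hookrightarrow f^{-N}M_2$ becomes an isomorphism after $(\cdot)_f$, one gets $M_{1,f}\cong M'_{1,f}\cong M_{2,f}\oplus M_f$, and as $M_{1,f}$ is free this exhibits $M_f$ as a direct summand of a finite free module. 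Your argument is missing this $H^1$-vanishing/push-out mechanism, and without it the projectivity does not follow.
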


\begin{proof}
We follow \cite[Theorem 6.3.9]{KL15} which gives basically from \cref{proposition1} that there exists $\mathbf{e}_1,...,\mathbf{e}_k$ which are the global sections of $M(dn)$ for some $n$ which generate $M$ itself. Then we have that considering $f^{-n}\mathbf{e}_1,...,f^{-n}\mathbf{e}_k$ could give rise to the corresponding surjectivity as in \cite[Theorem 6.3.9]{KL15}. Then we consider the corresponding presentation by using these generating elements from $\widetilde{\Pi}_{R,A}(-dn)^k(dn)$ mapping to $M(dn)$ which gives rise to a mapping which is surjective mapping from $M_1\rightarrow M$ (certainly here we have to consider the corresponding bundle in our context corresponding to the module $\widetilde{\Pi}_{R,A}(-dn)^k$ and then twist to establish the corresponding map as above), which furthermore gives rise to (by the corresponding observation parallel to \cite[Theorem 6.3.9]{KL15}) an exact sequence taking the form of
\[
\xymatrix@R+0pc@C+0pc{
0\ar[r]\ar[r]\ar[r] &M_2 \ar[r]\ar[r]\ar[r] &M_1
\ar[r]\ar[r]\ar[r] &M
\ar[r]\ar[r]\ar[r] &0
}.
\]
Then we consider the corresponding induced exact sequence taking the form of:
\[
\xymatrix@R+0pc@C+0pc{
0\ar[r]\ar[r]\ar[r] &M_{2,f} \ar[r]\ar[r]\ar[r] &M_{1,f}
\ar[r]\ar[r]\ar[r] &M_{f}
\ar[r]\ar[r]\ar[r] &0
},
\]
from the discussion above. Then to finish we look at the corresponding commutative diagram as in \cite[Theorem 6.3.9]{KL15} in the following:
\[\tiny
\xymatrix@R+5pc@C+0.7pc{
 &M_{2,f}\otimes_{P_A[1/f]_0}\widetilde{\Pi}^{I}_{R,A}[1/f] \ar[d]\ar[d]\ar[d] \ar[r]\ar[r]\ar[r] &M_{1,f}\otimes_{P_A[1/f]_0}\widetilde{\Pi}^{I}_{R,A}[1/f] \ar[d]\ar[d]\ar[d]
\ar[r]\ar[r]\ar[r] &M_{f}\otimes_{P_A[1/f]_0}\widetilde{\Pi}^{I}_{R,A}[1/f] \ar[d]\ar[d]\ar[d]
\ar[r]\ar[r]\ar[r] &0\\
0\ar[r]\ar[r]\ar[r] &M_{2,f}\otimes_{\widetilde{\Pi}^{I}_{R,A}}\widetilde{\Pi}^{I}_{R,A}[1/f]	 \ar[r]\ar[r]\ar[r] &M_{1,f}\otimes_{\widetilde{\Pi}^{I}_{R,A}}\widetilde{\Pi}^{I}_{R,A}[1/f]	
\ar[r]\ar[r]\ar[r] &M_{f}\otimes_{\widetilde{\Pi}^{I}_{R,A}}\widetilde{\Pi}^{I}_{R,A}[1/f]	
\ar[r]\ar[r]\ar[r] &0
},
\]
which finishes the proof on the bijectivity as in the absolute situation in \cite[Theorem 6.3.9]{KL15}. The finite projectiveness could be prove exactly by the same fashion in \cite[Theorem 6.3.9]{KL15} (also one can similarly derive the corresponding finitely-presentedness). To be more precise one can look at the exact sequence constructed above:
\[
\xymatrix@R+0pc@C+0pc{
0\ar[r]\ar[r]\ar[r] &M_2 \ar[r]\ar[r]\ar[r] &M_1
\ar[r]\ar[r]\ar[r] &M
\ar[r]\ar[r]\ar[r] &0
}.
\]
Then by taking suitable higher degree twist in our situation (by considering the corresponding vanishing of the cohomology $\mathrm{H}^1(M^\vee\otimes M_2(dN))$) one can get the following push-out diagram just as in \cite[Theorem 6.3.9]{KL15}:
\[
\xymatrix@R+2pc@C+2pc{
 M_2 \ar[d]\ar[d]\ar[d] \ar[r]\ar[r]\ar[r] &M_1 \ar[d]\ar[d]\ar[d]
\ar[r]\ar[r]\ar[r] &M \ar[d]\ar[d]\ar[d]\\
f^{-N}M_2	 \ar[r]\ar[r]\ar[r] &M'_{1}
\ar[r]\ar[r]\ar[r] &M	
},
\]
which gives rise to that $M_{1,f}\overset{\sim}{\rightarrow} M'_{1,f}$. Then by our construction, since $M_{1,f}$ is free we are done.

\end{proof}

\indent We then have the following results after suitable localization.

\begin{corollary} \label{corollary7.6}
Pick the element $f$ as in the previous proposition, we have that the following four categories are equivalent: \\
I. The category of finite locally free $P_A[1/f]_0$-modules over the relative coordinate ring of the localization of the Fargues-Fontaine curve $\mathrm{Proj}P_A$;\\
II.The category of $A$-relative $\varphi^a$-modules over $\widetilde{\Pi}^\infty_{R,A}[1/f]$;\\
III. The category of $A$-relative $\varphi^a$-modules over $\widetilde{\Pi}_{R,A}[1/f]$;\\
IV. The category of $A$-relative $\varphi^a$-bundles over $\widetilde{\Pi}_{R,A}[1/f]$.	
\end{corollary}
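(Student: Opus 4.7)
The plan is to use the preceding proposition as the engine for the equivalence $\text{IV}\leftrightarrow\text{I}$, and to bridge to (II) and (III) using the tensor-equivalence and global-generation results from the previous section. For $\text{IV}\rightarrow\text{I}$, I send a $\varphi^a$-bundle $M$ to $M_f = \bigcup_{n\ge 0} f^{-n} M(dn)^{\varphi^a}$, which by the preceding proposition is finite projective over $P_A[1/f]_0$. For $\text{I}\rightarrow\text{IV}$, I send a finite projective $N$ to the collection $\{N\otimes_{P_A[1/f]_0}\widetilde{\Pi}_{R,A}^I[1/f]\}_I$ with transitions and Frobenius inherited from $N$ (using that $\varphi^a(f)=p^d f$, so inverting $f$ is Frobenius-equivariant). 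That these are mutually quasi-inverse: the round trip $\text{IV}\rightarrow\text{I}\rightarrow\text{IV}$ is precisely the bijectivity assertion
\[
M_f\otimes_{P_A[1/f]_0}\widetilde{\Pi}_{R,A}^I[1/f]\xrightarrow{\sim} M_f\otimes_{\widetilde{\Pi}_{R,A}^I}\widetilde{\Pi}_{R,A}^I[1/f]
\]
from the preceding proposition, while $\text{I}\rightarrow\text{IV}\rightarrow\text{I}$ reduces, upon splitting $N$ as a summand of a free module over a Zariski cover of $\mathrm{Spec}\, P_A[1/f]_0$, to identifying the graded $\varphi^a$-invariants with $P_{\bullet,A}[1/f]$ through the very definition of $P_{R,A}$.

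For the chain involving (II) and (III), I would invoke the tensor equivalence following \cref{proposition2.8} (restriction to any single interval is an equivalence between $\varphi^a$-modules and $\varphi^a$-bundles over $\widetilde{\Pi}_{R,A}$) to pass between (III) and (IV). For $\text{II}\leftrightarrow\text{III}$ I would use base change along $\widetilde{\Pi}^\infty_{R,A}[1/f]\hookrightarrow\widetilde{\Pi}_{R,A}[1/f]$ in one direction, with descent in the reverse direction coming from \cref{proposition2.8}: global sections generate a $\varphi^a$-bundle through a finite system already sitting in $\widetilde{\Pi}^\infty_{R,A}$-coefficients, and the same argument that produced $M_f$ as finite projective over $P_A[1/f]_0$ in the preceding proposition shows the descended module is finite projective over $\widetilde{\Pi}^\infty_{R,A}[1/f]$. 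Since inverting $f$ is flat and every construction is functorial, the localizations cohere through the chain.

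The main obstacle I anticipate is in $\text{I}\rightarrow\text{IV}$: one must verify that the assignment $I\mapsto N\otimes_{P_A[1/f]_0}\widetilde{\Pi}_{R,A}^I[1/f]$ actually yields a $\varphi^a$-bundle in the sense of \cref{Def2.6} and the definition following it, namely finite local freeness on each interval together with a Frobenius semilinear isomorphism satisfying the cocycle relation over triples $I\subset I'\subset I''$. Reduction to a Zariski cover that splits $N$ as a summand of a free module handles the local freeness; the Frobenius compatibility is built in from the identity $\varphi^a(f)=p^d f$ so that the transition maps commute with $\varphi^a$; and the cocycle condition is then automatic from the one satisfied by the structural maps among the period rings $\widetilde{\Pi}_{R,A}^I$. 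I expect the completed tensor products in the definition of the relative period rings to introduce only bookkeeping noise, since $A$ is a reduced affinoid algebra and the arguments of the preceding proposition already exhibit the needed control on each interval.
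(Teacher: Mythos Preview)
Your proposal is correct and follows essentially the same approach as the paper: the paper's own proof is a single sentence (``This is essentially by iteratedly applying the base change functor and the previous proposition''), and you have simply unpacked what that sentence means---the chain of base-change functors $\text{I}\to\text{II}\to\text{III}\to\text{IV}$ together with the functor $M\mapsto M_f$ closing the loop via the preceding proposition. Your explicit verification of the round trips and of the bundle axioms for $N\otimes_{P_A[1/f]_0}\widetilde{\Pi}_{R,A}^I[1/f]$ is more than the paper provides, but it is exactly the content that the paper's one-liner is gesturing at.
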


\begin{proof}
This is essentially by iteratedly applying the base change functor and the previous proposition.	
\end{proof}



\begin{proposition} \mbox{\bf (After Kedlaya-Liu \cite[Theorem 6.3.12]{KL15})}     \label{corollary7.6}
We have that the following four categories are equivalent: \\
I. The category of quasicoherent finite locally free sheaves of $\mathcal{O}_{\mathrm{Proj}P_A}$-modules (namely the $A$-relative vector bundles) over the Fargues-Fontaine curve $\mathrm{Proj}P_A$;\\
II.The category of $A$-relative $\varphi^a$-modules over $\widetilde{\Pi}^\infty_{R,A}$;\\
III. The category of $A$-relative $\varphi^a$-modules over $\widetilde{\Pi}_{R,A}$;\\
IV. The category of $A$-relative $\varphi^a$-bundles over $\widetilde{\Pi}_{R,A}$.	
\end{proposition}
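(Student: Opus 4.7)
The plan is to bootstrap the statement from the already-established localized equivalence (the preceding four-category equivalence over the distinguished affine open $D(f)$) by sheafifying over $\mathrm{Proj}P_A$. The chain II $\leftrightarrow$ III $\leftrightarrow$ IV is essentially subsumed by \cref{Def2.6}, together with the tensor-equivalence lemma that $M\mapsto M_I$ is an equivalence between $\varphi^a$-modules over $\widetilde{\Pi}_{R,A}$ and over $\widetilde{\Pi}^I_{R,A}$, and by the global generation of bundles furnished by \cref{proposition2.8}; the genuinely new content therefore lies in identifying any of II, III, IV with I.

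In the forward direction, given a $\varphi^a$-bundle $M$ over $\widetilde{\Pi}_{R,A}$, I would assign to each homogeneous $f\in P_{d,A}$ of positive degree the module $M_f=\bigcup_{n\ge 0}f^{-n}M(dn)^{\varphi^a=1}$ over $P_A[1/f]_0$. The preceding localized corollary guarantees that $M_f$ is finite projective and that the base-change comparison $M_f\otimes_{P_A[1/f]_0}\widetilde{\Pi}^I_{R,A}[1/f]\overset{\sim}{\rightarrow} M\otimes_{\widetilde{\Pi}_{R,A}}\widetilde{\Pi}^I_{R,A}[1/f]$ is bijective. Running the same argument with a second homogeneous element $g$ and with the product $fg$ gives the standard localization compatibility on overlaps, so the collection $\{M_f\}$ glues uniquely to a quasicoherent sheaf $\widetilde{M}$ on $\mathrm{Proj}P_A$ which is finite locally free on each $D(f)$, hence an $A$-relative vector bundle.

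Conversely, given a finite locally free sheaf $\mathcal{F}$ on $\mathrm{Proj}P_A$, I would fix a finite covering by distinguished opens $\{D(f_i)\}_i$ attached to homogeneous elements $f_i$ of positive degree, read off the finite projective modules $\mathcal{F}(D(f_i))$, promote each to a $\varphi^a$-bundle $M^{(i)}$ over $\widetilde{\Pi}_{R,A}[1/f_i]$ via the preceding localized corollary, and descend along the localization data to a single $\varphi^a$-bundle $M$ over $\widetilde{\Pi}_{R,A}$. The descent step reduces, interval by interval, to flat descent of finite locally free modules on $\widetilde{\Pi}^I_{R,A}$, a fact which is inherited from the absolute case in \cite{KL15} and preserved under completed tensor with $A$ because $\widetilde{\Pi}^I_{R,A}=\widetilde{\Pi}^I_R\widehat{\otimes}_{\mathbb{Q}_p}A$ is a faithfully flat extension of $\widetilde{\Pi}^I_R$. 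A chart-by-chart check, again using the preceding localized corollary, then shows the two functors are mutually quasi-inverse.

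The principal obstacle I anticipate is the existence of a \emph{finite} covering of $\mathrm{Proj}P_A$ by $D(f_i)$ with $f_i$ honestly homogeneous in $P_{R,A}$: in the absolute case \cite[Theorem 6.3.12]{KL15} this comes from the primitive elements providing the canonical two-chart cover of the classical Fargues-Fontaine curve, and one has to check that completed tensor with the reduced affinoid $A$ neither destroys the quasicompactness nor introduces new points requiring additional charts. This should reduce to a spreading-out argument — $A$ is Noetherian and the transition data over the two charts are already recorded by a single $\widetilde{\Pi}^I_{R,A}$-module, so descent of a finite datum is sufficient — after which the rest of the proof is a direct translation of Kedlaya-Liu's absolute argument into the $A$-relative framework.
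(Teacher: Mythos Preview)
Your overall architecture matches the paper: the chain II $\to$ III $\to$ IV is handled exactly as you say (base change, then passage to the associated bundle), and the direction IV $\to$ I via $M\mapsto\{M_f\}_f$ and glueing is the same. The one place you diverge is the inverse functor out of category I. You propose going I $\to$ IV by choosing a finite cover $\{D(f_i)\}$, promoting each $\mathcal{F}(D(f_i))$ to a $\varphi^a$-bundle over $\widetilde{\Pi}_{R,A}[1/f_i]$, and then \emph{descending} along the localization maps $\widetilde{\Pi}^I_{R,A}\to\widetilde{\Pi}^I_{R,A}[1/f_i]$; the paper instead goes I $\to$ II by first invoking the $A$-relative analog of \cite[Lemma~6.3.7]{KL15} to produce a map of locally ringed spaces from (the space associated to) $\widetilde{\Pi}^\infty_{R,A}$ to $\mathrm{Proj}P_A$, pulling $\mathcal{F}$ back along it, glueing on the Robba-ring side, and taking global sections. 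The input for that analog is precisely \cref{proposition2.8} and \cref{proposition1}, which together guarantee that the homogeneous $f$'s of positive degree already generate the unit ideal in each $\widetilde{\Pi}^I_{R,A}$; this is exactly the ``finite-cover'' obstruction you flag at the end, and the paper resolves it not by a spreading-out argument over $A$ but by the same generation-by-invariants mechanism that drove the localized comparison. Your descent formulation is not wrong, but the step ``flat descent \dots inherited from the absolute case and preserved under completed tensor with $A$'' is doing the real work and would need the same Lemma~6.3.7 input to be justified; once you grant that the $f$'s cover, your descent and the paper's pullback-and-glue are two names for the same construction.
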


\begin{proof}

\indent Let us mention briefly the corresponding functors involved since essentially the process is as in the same fashion of the construction of \cite[Theorem 6.3.12]{KL15}. First from II to III, this is just the base change, and the III to IV is the functor which maps any $\varphi^a$-module to the associated $\varphi^a$-bundle. Then for the functor from I to II one considers the process which associates any vector bundle $V$ in the first category some finite locally free sheaf $\mathcal{V}$ over the corresponding space associated to $\widetilde{\Pi}^\infty_{R,A}$ by considering the localization and glueing through the consideration by $f$ mentioned above, for the glueing we apply the direct analog of \cite[Lemma 6.3.7]{KL15} by considering \cref{proposition2.8} and \cref{proposition1}. Then one takes the global section to get desired module in the second category. Then in our situation the previous proposition gives us the final equivalence after the well-established glueing process under the consideration of the further relativization coming from the algebra $A$.
\end{proof}

\indent One can then further discuss the corresponding results on the comparison of pseudocoherent objects (just as in \cite[Definition 4.4.4]{KL16}), as in \cite[Section 4.6]{KL16}. The results for the pseudocoherent objects are bit more complicated than the results established above in the context of just vector bundles. We first have the following analog of \cite[Corollary 4.6.10]{KL16}:

\begin{proposition}
Let $M_\alpha,M,M_\beta$ be three finitely generated Frobenius modules over the ring $\widetilde{\Pi}_{R,A}$ (namely endowed with a semilinear action from the Frobenius operator). And now we put the modules then in an exact sequence:
\[
\xymatrix@R+0pc@C+0pc{
0\ar[r]\ar[r]\ar[r] &M_\alpha \ar[r]\ar[r]\ar[r] &M
\ar[r]\ar[r]\ar[r] &M_\beta
\ar[r]\ar[r]\ar[r] &0.
}
\]
Then we have first the following exact sequence for sufficiently large integer $n\geq 0$:
\[
\xymatrix@R+0pc@C+0pc{
0\ar[r]\ar[r]\ar[r] &M_\alpha(n)^{\varphi^a} \ar[r]\ar[r]\ar[r] &M(n)^{\varphi^a}
\ar[r]\ar[r]\ar[r] &M_\beta(n)^{\varphi^a}
\ar[r]\ar[r]\ar[r] &0.
}
\]
Then we have for any element $f$ which is a $\varphi^a=p^\ell$ invariance, after forming the following module for each $M_*:=M_\alpha,M,M_\beta$:
\begin{displaymath}
M_{*,f}:=\bigcup_{n\in \mathbb{Z}}f^{-n}M_{*}(\ell n)^{\varphi^a},	
\end{displaymath}
the following exact sequence of Frobenius modules over $(\widetilde{\Pi}_{R,A}[1/f])^{\varphi^a}$:
\[
\xymatrix@R+0pc@C+0pc{
0\ar[r]\ar[r]\ar[r] &M_{\alpha,f} \ar[r]\ar[r]\ar[r] &M_f
\ar[r]\ar[r]\ar[r] &M_{\beta,f}
\ar[r]\ar[r]\ar[r] &0.
}
\]
Eventually we have the fact that if $M'$ is then a pseudocoherent Frobenius module over $\widetilde{\Pi}_{R,A}$ then the module $M_f$ is also a pseudocoherent module over $(\widetilde{\Pi}_{R,A}[1/f])^{\varphi^a}$.
\end{proposition}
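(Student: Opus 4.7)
The plan is to establish the three assertions in order, each building on the previous, mirroring the template of \cite[Corollary 4.6.10]{KL16}.

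For the first exact sequence, I would apply \cref{propo2.13} (our $A$-relative analogue of \cite[Proposition 4.6.9]{KL16}) to each of $M_\alpha$, $M$, $M_\beta$ simultaneously. Since all three are finitely generated over $\widetilde{\Pi}_{R,A}$ by hypothesis, this produces a single integer $N \geq 0$ such that $H^1_{\varphi^a}(M_\alpha(n)) = 0$ for every $n \geq N$. Left-exactness of $\varphi^a$-invariants is automatic; the short exact sequence $0 \to M_\alpha(n) \to M(n) \to M_\beta(n) \to 0$ then yields a long exact sequence of $\varphi^a$-cohomology, and the vanishing of $H^1_{\varphi^a}(M_\alpha(n))$ collapses it to the asserted short exact sequence of $\varphi^a$-invariants.

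For the second exact sequence, multiplication by the $\varphi^a = p^\ell$-invariant element $f$ gives $\varphi^a$-equivariant maps $M_*(\ell(n-1)) \to M_*(\ell n)$ and hence compatible transition maps on $\varphi^a$-invariants; in this way one identifies $M_{*,f}$ with $(M_*[1/f])^{\varphi^a}$, realized as a filtered colimit $\mathrm{colim}_n\, f^{-n} M_*(\ell n)^{\varphi^a}$ indexed by $n \geq N$ (the finitely many small-$n$ terms change nothing). Since the short exact sequences from Stage~1 hold for all $n \geq N$, and filtered colimits of abelian groups preserve exactness, passage to the colimit produces the claimed exact sequence of $(\widetilde{\Pi}_{R,A}[1/f])^{\varphi^a}$-modules. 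For the pseudocoherence claim, I would choose a resolution $\cdots \to F_1 \to F_0 \to M' \to 0$ by finite free Frobenius modules over $\widetilde{\Pi}_{R,A}$ whose syzygies $Z_i := \ker(F_i \to F_{i-1})$ (with $F_{-1} := M'$) remain finitely generated at every finite stage, as guaranteed by the \cite{SGA6} definition of pseudocoherence. Applying Stage~2 to each short exact sequence $0 \to Z_i \to F_i \to Z_{i-1} \to 0$ of finitely generated Frobenius modules yields $0 \to Z_{i,f} \to F_{i,f} \to Z_{i-1,f} \to 0$, and splicing these together produces a resolution $\cdots \to F_{1,f} \to F_{0,f} \to M'_f \to 0$. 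Because $F_i \cong \widetilde{\Pi}_{R,A}^{n_i}$ as Frobenius modules, one has $F_{i,f} \cong \bigl((\widetilde{\Pi}_{R,A}[1/f])^{\varphi^a}\bigr)^{n_i}$, which is finite free over the base ring, so $M'_f$ is pseudocoherent.

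The main obstacle I anticipate is the bookkeeping in Stage~2: one must verify that the integer $N$ from \cref{propo2.13} can be chosen uniformly across all three modules of the sequence, and that the transition maps in the colimit system are genuinely compatible after twisting by the three module structures. A secondary but related subtlety, in Stage~3, is to ensure that the syzygies $Z_i$ remain finitely generated at every finite truncation of the resolution; this is precisely the SGA\,6 formulation of pseudocoherence but should be invoked explicitly, since one is iterating Stage~2 in an unbounded way along the resolution rather than applying it a single time.
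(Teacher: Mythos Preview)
Your proposal is correct and follows essentially the same route as the paper, which simply says to apply \cref{propo2.13} in the manner of \cref{Corol2.12} for the first two assertions and then to iterate along a projective resolution as in \cite[Corollary 4.6.10]{KL16} for the last; you have merely written out the details that the paper leaves implicit.

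One small imprecision worth tightening: in Stage~3 you assert that the terms $F_i$ of the resolution satisfy $F_i \cong \widetilde{\Pi}_{R,A}^{n_i}$ \emph{as Frobenius modules}, i.e.\ with trivial Frobenius. In general the Frobenius-equivariant surjections one produces via \cref{propo2.13} come from $\varphi^a$-invariants of twists, so the $F_i$ are direct sums of line objects $\widetilde{\Pi}_{R,A}(m_j)$ rather than untwisted free modules. This does not damage the argument: what you actually need is that each $F_{i,f}$ is finite projective over $(\widetilde{\Pi}_{R,A}[1/f])^{\varphi^a}$, and that is exactly the content of the $A$-relative analogue of \cite[Theorem 6.3.9]{KL15} established earlier in the paper for finite projective $\varphi^a$-bundles. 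Invoke that result rather than claiming freeness directly, and the splice goes through as you describe.
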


\begin{proof}
Apply \cref{propo2.13} as in \cref{Corol2.12} to derive the first two consequences. As in \cite[Corollary 4.6.10]{KL16} one can further prove the last statement by taking the corresponding projective resolutions and by using the previous statement repeatedly. 
\end{proof}

\begin{proposition}\mbox{\bf (After Kedlaya-Liu \cite[Theorem 4.6.12]{KL16})} Taking pullbacks along the map from the scheme associated to $\widetilde{\Pi}^\infty_{R,A}$ (by the analog of \cite[Lemma 6.3.7]{KL15} by using \cref{proposition1} and \cref{proposition2.8}) to the (schematic) Fargues-Fontaine curve (as in \cite[Definition 4.6.11]{KL16}) gives rise to an exact functor which establishes the equivalence between the category of pseudocoherent sheaves over the Fargues-Fontaine curve and the category of pseudocoherent Frobenius modules over $\widetilde{\Pi}_{R,A}$ (with isomorphism by the Frobenius pullbacks). This equivalence respects the corresponding comparison on the sheaf cohomologies and the Frobenius cohomologies.
	
\end{proposition}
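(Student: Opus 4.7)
My plan is to reduce this pseudocoherent equivalence to the finite projective case treated in \cref{corollary7.6} by means of finite free Frobenius resolutions, using the exactness and pseudocoherence-preservation properties furnished by the previous proposition. In the direction from modules to sheaves, given a pseudocoherent Frobenius module $M$ over $\widetilde{\Pi}_{R,A}$, for each homogeneous $f \in P_{n,A}$ with $n > 0$ the previous proposition shows that
\[
M_{f} = \bigcup_{k \in \mathbb{Z}} f^{-k} M(nk)^{\varphi^a}
\]
is pseudocoherent over $P_{R,A}[1/f]_0$; compatibility of these local modules under inclusions of distinguished opens $D_{+}(fg) \subseteq D_{+}(f)$ is verified exactly as in \cite[Theorem 4.6.12]{KL16}, so they glue to a pseudocoherent sheaf $\widetilde{M}$ on $\mathrm{Proj}\,P_{R,A}$. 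The inverse functor sends a pseudocoherent sheaf $\mathcal{F}$ to the pullback to $\widetilde{\Pi}_{R,A}^{\infty}$ along the morphism provided by the analog of \cite[Lemma 6.3.7]{KL15} (invoking \cref{proposition1} and \cref{proposition2.8}), followed by base change to $\widetilde{\Pi}_{R,A}$ with its natural Frobenius pullback isomorphism. Exactness of both functors is immediate: on the module side it is the exact-sequence part of the previous proposition, and on the sheaf side it follows from flatness of the localizations.

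To show that the two functors are quasi-inverse, I would fix a pseudocoherent Frobenius module $M$ and use \cref{propo2.13} to produce a surjection $F_0 \twoheadrightarrow M$ from a finite free Frobenius module $F_0$ (after a suitable Tate twist so that the generating $\varphi^a$-invariants are available). Iterating with the definition of pseudocoherence yields a resolution $\cdots \to F_1 \to F_0 \to M \to 0$ by finite free, hence finite projective, Frobenius modules. Applying the forward functor termwise and invoking its exactness produces a resolution of $\widetilde{M}$ by the vector bundles associated to each $F_i$, to each of which \cref{corollary7.6} applies. A diagram chase then identifies the composite of the two functors in both directions with the identity at the level of these resolutions, which forces the identification of pseudocoherent objects themselves.

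Finally, the cohomology comparison proceeds via Čech computation with respect to a finite affine cover of $\mathrm{Proj}\,P_{R,A}$ by distinguished opens $D_{+}(f)$, on which $\widetilde{M}$ has sections the $M_{f}$; the corresponding Čech differentials match those coming from \cref{Corol2.12} applied to the exact sequences controlling $M(n)^{\varphi^a=1}$ on overlaps $D_{+}(f)\cap D_{+}(g)$. I expect the main obstacle to lie precisely in this cohomology comparison: passing from the vector bundle case to the pseudocoherent case in the $A$-relative setting requires uniform control on the twist-by-$n$ vanishing of higher cohomology, together with verification that the required acyclicity on intersections survives without finite projectivity. The previous proposition combined with the iterative use of the finite free resolution above should, however, be sufficient to push this argument through, as at each stage one may compare the cohomology of the finite free piece (handled by \cref{corollary7.6}) with the cohomology of the pseudocoherent piece through the long exact sequences of the resolution.
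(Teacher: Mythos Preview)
Your overall plan---reduce to the finite projective case via resolutions---is in the right spirit, but you have glossed over precisely the step that constitutes the main content of the paper's argument. You write that exactness of the sheaf-to-module (pullback) functor ``follows from flatness of the localizations.'' The paper explicitly begins its proof by noting that this pullback is \emph{a priori only right exact}; the base change $P_A[1/f]_0 \to \widetilde{\Pi}^{I}_{R,A}[1/f]$ is not known to be flat in this generality, and nothing in the preceding propositions supplies it. Without exactness of the pullback you cannot even show that the image of a pseudocoherent sheaf is a pseudocoherent Frobenius module (right exactness gives only a two-term presentation, not control on higher syzygies), and you cannot run the quasi-inverse check in the direction starting from a sheaf, which your proposal never actually carries out.

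The paper's proof handles this differently from your resolution-on-the-module-side approach. It starts on the sheaf side: given a pseudocoherent sheaf $V$, it chooses a presentation $0\to V_1\to V_2\to V\to 0$ with $V_2$ a vector bundle, pulls back to obtain $W_1\to W_2\to W\to 0$, and then names the obstruction to left exactness as a kernel $*$ sitting in $0\to *\to W_1\to W_2\to W\to 0$. Applying $(\,\cdot\,)_f$ (exact by the previous proposition) and comparing with the sections $V_i|_{\widetilde{\Pi}_{R,A}[1/f]^{\varphi^a}}$ in a commutative diagram, the finite projective case forces the middle vertical map to be an isomorphism; a bootstrapping step (replace $V$ by $V_1$) plus the five lemma then makes all vertical maps isomorphisms and forces $*_f=0$, hence $*=0$. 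This simultaneously proves exactness of the pullback and the identification $W_f\cong V|_{\widetilde{\Pi}_{R,A}[1/f]^{\varphi^a}}$, after which the equivalence and the cohomology comparison fall out. Your resolution argument could likely be made to work, but only after you supply an independent proof of this exactness---and the most direct route to that is exactly the five-lemma argument you have omitted.
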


\begin{proof}
We follow the strategy of the proof of \cite[Theorem 4.6.12]{KL16} to prove this. First in our context we have that the above functor is actually just initially known to be right exact under the base change. Now suppose $V$ is a pseudocoherent sheaf over the schematic Fargues-Fontaine curve in our context. As in the beginning of the proof of \cite[Theorem 4.6.12]{KL16} one has the following exact sequence:
\[
\xymatrix@R+0pc@C+0pc{
0\ar[r]\ar[r]\ar[r] &V_1 \ar[r]\ar[r]\ar[r] &V_2
\ar[r]\ar[r]\ar[r] &V
\ar[r]\ar[r]\ar[r] &0,
}
\]
where $V_1$ is pseudocoherent and $V_2$ is assumed and set to be a vector bundle. Then we apply the pullback construction (as mentioned in the statement of the proposition) we have the corresponding (after taking the further base change) exact sequence of the pseudocoherent Frobenius modules over the ring $\widetilde{\Pi}_{R,A}$:
\[
\xymatrix@R+0pc@C+0pc{
W_1 \ar[r]\ar[r]\ar[r] &W_2
\ar[r]\ar[r]\ar[r] &W
\ar[r]\ar[r]\ar[r] &0,
}
\] 
where $W_2$ is finite projective and $W_1$ is finitely generated. We then have the following exact sequence:
\[
\xymatrix@R+0pc@C+0pc{
0 \ar[r]\ar[r]\ar[r] &\mathrm{*} \ar[r]\ar[r]\ar[r] &W_1 \ar[r]\ar[r]\ar[r] &W_2
\ar[r]\ar[r]\ar[r] &W
\ar[r]\ar[r]\ar[r] &0,
}
\]
where $*$ is the kernel of the map $W_1\rightarrow \mathrm{Kernel}(W_2\rightarrow W)$. Then choose some $f$ with some specific degree as what we did before, by the previous proposition we have the following exact sequence:
\[
\xymatrix@R+0pc@C+0pc{
0 \ar[r]\ar[r]\ar[r] &\mathrm{*}_f \ar[r]\ar[r]\ar[r] &W_{1,f} \ar[r]\ar[r]\ar[r] &W_{2,f}
\ar[r]\ar[r]\ar[r] &W_f
\ar[r]\ar[r]\ar[r] &0,
}
\]
then by taking the corresponding section we have the following commutative diagram:
\[
\xymatrix@R+5pc@C+1.4pc{
 &0 \ar[r]\ar[r]\ar[r] &V_1|_{\widetilde{\Pi}_{R,A}[1/f]^{\varphi^a}} \ar[r]\ar[r]\ar[r] \ar[d]\ar[d]\ar[d] &V_2|_{\widetilde{\Pi}_{R,A}[1/f]^{\varphi^a}} \ar[d]\ar[d]\ar[d]
\ar[r]\ar[r]\ar[r] &V|_{\widetilde{\Pi}_{R,A}[1/f]^{\varphi^a}} \ar[d]\ar[d]\ar[d]
\ar[r]\ar[r]\ar[r] &0 \\
0 \ar[r]\ar[r]\ar[r] &\mathrm{*}_f \ar[r]\ar[r]\ar[r] &W_{1,f} \ar[r]\ar[r]\ar[r] &W_{2,f}
\ar[r]\ar[r]\ar[r] &W_f
\ar[r]\ar[r]\ar[r] &0
},
\]
where by our above results on the comparison on the finite projective objects we have that the second vertical morphism is isomorphism which implies that the third vertical map is surjective. Then we repeat the corresponding argument and the construction by using $V_1$ as our $V$ we can also derive the fact that the first vertical map is also at this situation surjective. Then as in \cite[Theorem 4.6.12]{KL16} the five lemma implies that the third vertical map is also injective. Then we apply the same construction and argument to the situation where we use $V_1$ to be our $V$ we can derive the fact that the first vertical map is also injective. Then we have that all the vertical maps are then in this situation isomorphism, which further implies that $*_f$ is trivial, as in \cite[Theorem 4.6.12]{KL16} by using the previous proposition we have that $*$ is trivial. The functor send finite projective objects to the corresponding finite projective objects so then we have the pseudocoherent objects in the corresponding essential image. Then we have a well-defined exact functor in our situation where the quasi-inverse is just taking the corresponding Frobenius invariance. The functor from the modules over the Robba ring to the sheaves over the Fargues-Fontaine curve will have the corresponding composition with the quasi-inverse being equivalence. On the other hand, to show that the functor from the sheaves over the Fargues-Fontaine curve will have the corresponding composition with the quasi-inverse being equivalence will eventually reduce to the corresponding statement in the finite projective setting as what we did before.
\end{proof}

\subsection{Fr\'echet-Stein Deformation of Schematic Relative Fargues-Fontaine curves} \label{section3.2}

\indent For our purpose and motivation, we would like to consider the corresponding situation where $A$ is replaced by the distribution algebras attached to some specific $p$-adic Lie groups or some mixed-type algebra by taking the product of $A$ with these interesting distribution algebras. Let us recall the following construction:

\begin{setting} \label{setting3.8}
Let $G$ be a $p$-adic Lie group in the fashion picked in the next setting, then we will use the notation $\mathcal{O}_K[[G]]$ to denote the corresponding integral completed group algebra over some finite extension $K/\mathbb{Q}_p$. Then we use the notation $K[[G]]$ to denote the base change of the integral ring to $K$. From here we consider the following integral version of the distribution we are considering:
\begin{displaymath}
A_n:=\mathcal{O}_K[[G]][\mathfrak{m}^n/p]^\wedge_{(p)}[1/p]	
\end{displaymath}
then take the inverse limit we get the ring $A_\infty$, where $\mathfrak{m}$ is the Jacobson radical. However this is not Noetherian, which makes life complicated. It is expected that one should consider this ring in order to do the equivariant or even more general noncommutative Iwasawa theory and the corresponding Tamagawa conjectures, in stead of just the usual Iwasawa algebra. For instance in the situation where $G$ is just $\mathbb{Z}_p^\times$ and $K=\mathbb{Q}_p$ this is just the ring $\Pi^\infty(\Gamma_K)$ which is just the Robba ring corresponding to the radius $\infty$. Also see \cite{Zah1} where this is discussed in more detail.
\end{setting}

\begin{setting}
In our current context we are going to focus on those groups in the following form:
\begin{displaymath}
G=\mathbb{Z}_p^d,\mathbb{Z}_p^\times,\mathbb{Z}_p^\times \ltimes \mathbb{Z}_p^n.	
\end{displaymath}
In this situation we just define the corresponding period rings deformed over the distribution algebra $A_\infty(G)$:
\begin{align}
&\widetilde{\Omega}^\mathrm{int}_{R,A_\infty(G)}:=\varprojlim_{n\rightarrow \infty}\widetilde{\Omega}^\mathrm{int}_{R,A^\mathrm{int}_n(G)},\widetilde{\Omega}_{R,A_\infty(G)}:=\varprojlim_{n\rightarrow \infty}\widetilde{\Omega}_{R,A_n(G)},\\
&\widetilde{\Pi}^{\mathrm{int},r}_{R,A_\infty(G)}:=\varprojlim_{n\rightarrow \infty}\widetilde{\Pi}^{\mathrm{int},r}_{R,A^\mathrm{int}_n(G)},\widetilde{\Pi}^\mathrm{int}_{R,A_\infty(G)}:=\varprojlim_{n\rightarrow \infty}\widetilde{\Pi}^\mathrm{int}_{R,A^\mathrm{int}_n(G)},\\	
&\widetilde{\Pi}^{\mathrm{bd},r}_{R,A_\infty(G)}:=\varprojlim_{n\rightarrow \infty}\widetilde{\Pi}^{\mathrm{bd},r}_{R,A_n(G)},\widetilde{\Pi}^\mathrm{bd}_{R,A_\infty(G)}:=\varprojlim_{n\rightarrow \infty}\widetilde{\Pi}^\mathrm{bd}_{R,A_n(G)},\\
&\widetilde{\Pi}^I_{R,A_\infty(G)}:=\varprojlim_{n\rightarrow \infty}\widetilde{\Pi}^I_{R,A_n(G)},	
\end{align}
from which one can define furthermore the period rings 
\begin{align}
\widetilde{\Pi}^r_{R,A_\infty(G)},\widetilde{\Pi}^\infty_{R,A_\infty(G)},\widetilde{\Pi}_{R,A_\infty(G)}.	
\end{align}
Then we work over the corresponding finite locally free modules or bundles over the corresponding period rings defined above with the same definitions we used in \cref{Def2.6} to define those modules over the period rings associated to $A_\infty(G)$. We also endow the rings defined above with partial Frobenius where the action is induced form the Witt vector part, not the coefficient algebra part. We then define the corresponding families of $A_\infty(G)$-relative $\varphi^a$-module $M$ over the ring $\widetilde{\Pi}_{R,A_\infty(G)}^\infty$ or $\widetilde{\Pi}_{R,A_\infty(G)}$ to be a projective system $(M_n)_{n}$ where each $M_n$ for each $n$ is a corresponding $A_{n}(G)$-relative $\varphi^a$-modules over $\widetilde{\Pi}_{R,A_n(G)}^\infty$ or $\widetilde{\Pi}_{R,A_n(G)}$ such that we have the base change requirement for each $n$:
\begin{displaymath}
A_{n}(G)\otimes_{A_{n+1}(G)}M_{n+1}\overset{\sim}{\rightarrow}M_{n}.	
\end{displaymath}
Similarly we define the Frobenius bundles in this compatible way. And finally we define the corresponding global section of any families of the Frobenius modules over the corresponding period rings to be the corresponding projective limit taking the form of $\varprojlim_{n\rightarrow\infty}M_n$.
\end{setting}

\begin{remark}
One can safely extend the assumption on the group $G$ to more general setting such that the corresponding algebra $A_\infty(G)$ are inverse limit of reduced (commutative) affinoid algebras.	
\end{remark}

\begin{definition}
We will in this situation use the notation $P_{A_\infty(G)}$ to mean the ring $\varprojlim_n P_{A_n(G)}$, therefore in our situation we have the corresponding ind-scheme $\mathrm{Proj}P_{A_\infty(G)}$ at the infinite level. Therefore we define the corresponding vector bundles over $\mathrm{Proj}P_{A_\infty(G)}$ to be the corresponding quasi-coherent finite locally free sheaves over the infinite level scheme $\mathrm{Proj}P_{A_\infty(G)}$. We then define the families of vector bundles 
to be those families taking the form of $(M_n)_n$ where each $M_n$ is a quasicoherent finite locally free sheaves $M_n$ over each $\mathrm{Proj}P_{A_n(G)}$ in the compatible way as in the previous setting. In this situation we define the corresponding global section of a family of vector bundles to be $\varprojlim_{n\rightarrow \infty} M_n$.	
\end{definition}

\begin{remark}
Here we actually have different notions of the corresponding modules over the corresponding Stein style. Since as in the usual story of the Iwasawa theory we have to study the corresponding derived categories, $K$-group spectra, the corresponding determinant category and so on, so we need to be careful when we would like to use the corresponding notions defined above. Here the main subtle point in the consideration is that actually it is nontrivial if the global sections of the corresponding sheaves are finitely generated. 	
\end{remark}

\indent For the corresponding families of the modules and the bundles we have the following comparison theorem:

\begin{proposition}
We have that the following four categories are equivalent: \\
I. The category of families of quasicoherent finite locally free sheaves of $\mathcal{O}_{\mathrm{Proj}P_{A_\infty(G)}}$-modules (namely the $A_\infty(G)$-relative vector bundles) over the Fargues-Fontaine curve $\mathrm{Proj}P_{A_\infty(G)}$;\\
II.The category of families of $A_\infty(G)$-relative $\varphi^a$-modules over $\widetilde{\Pi}^\infty_{R,A_\infty(G)}$;\\
III. The category of families of $A_\infty(G)$-relative $\varphi^a$-modules over $\widetilde{\Pi}_{R,A_\infty(G)}$;\\
IV. The category of families of $A_\infty(G)$-relative $\varphi^a$-bundles over $\widetilde{\Pi}_{R,A_\infty(G)}$.	
\end{proposition}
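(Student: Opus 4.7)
The plan is to reduce this statement to the reduced-affinoid equivalence already established in \cref{corollary7.6} for each level $A_n(G)$, and then promote the four level-$n$ equivalences to equivalences of projective systems by a compatibility check along the transition maps of the Fr\'echet-Stein tower.

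First I would apply \cref{corollary7.6} to the reduced affinoid algebra $A_n(G)$ for each $n\geq 1$. This produces, at each level, mutually quasi-inverse functors among the four categories (quasicoherent finite locally free sheaves over $\mathrm{Proj}P_{A_n(G)}$; $\varphi^a$-modules over $\widetilde{\Pi}^\infty_{R,A_n(G)}$; $\varphi^a$-modules over $\widetilde{\Pi}_{R,A_n(G)}$; and $\varphi^a$-bundles over $\widetilde{\Pi}_{R,A_n(G)}$), realised by: scalar base change (between the two module categories), the tautological passage from modules to bundles, and taking global sections together with gluing via Frobenius invariants $M_f=\bigcup_{k} f^{-k}M(dk)^{\varphi^a}$ for $f$ of degree $d$ in $P_{A_n(G)}$.

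Second I would check that each of these level-$n$ functors is compatible with the transition maps $A_{n+1}(G)\to A_n(G)$ in the sense that it intertwines the base-change isomorphisms which are built into the definition of a family. For the scalar base change and the tautological functor from modules to bundles this is immediate from the associativity of tensor products and the compatible definition of the relative period rings as completed tensor products along the tower. The delicate case is the gluing construction $M\mapsto M_f$, where one must confirm the canonical comparison
\begin{align}
A_n(G)\otimes_{A_{n+1}(G)} (M_{n+1})_f \xrightarrow{\ \sim\ } (M_n)_f,
\end{align}
which ultimately amounts to checking that the formation of Frobenius-invariant twists $M_n(dk)^{\varphi^a}$ in sufficiently high degree $k$ commutes with base change along $A_{n+1}(G)\to A_n(G)$. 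This follows from \cref{prop2.9} (surjectivity of $\varphi^a-1$ on high-degree twists) together with \cref{Corol2.12}, applied to the exact sequence $0\to \ker\to M_{n+1}\to M_{n+1}\to 0$ arising from $A_{n+1}(G)$-multiplication by a generator of the kernel of $A_{n+1}(G)\to A_n(G)$, using flatness of the transition maps in the Fr\'echet-Stein tower.

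Third, once the naturality is in place, the four equivalences pass to projective systems componentwise: a compatible family $(M_n)_n$ is sent to $(F_n(M_n))_n$ equipped with the induced base-change isomorphisms, and the quasi-inverse is assembled in the same way. The required global-section functors emerge by the $\varprojlim_{n}$-construction fixed in the definition of families. The main obstacle is the naturality check in the second step: all four functors must genuinely commute with the (flat but not Noetherian-friendly) base changes along the tower, and in particular the Frobenius-invariant gluing $M\mapsto M_f$ must be shown to survive passage to the inverse limit without loss of exactness, which is the essential content imported from \cref{prop2.9} through \cref{propo2.13}.
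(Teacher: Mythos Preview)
Your approach is exactly the paper's: reduce to the already-proved affinoid case \cref{corollary7.6} at each level $A_n(G)$ and then pass to projective systems, the paper's own proof being the single line ``This will be further application of our established comparison above for $A_n(G)$ for each $n$.'' Your compatibility checks in step two go beyond what the paper records; note however that the exact sequence you write (with ``a generator of the kernel of $A_{n+1}(G)\to A_n(G)$'') is imprecise since that kernel need not be principal---but this detail is inessential, as base-change compatibility of the functors follows more directly from the fact that the forward functor (pullback along $\mathrm{Spec}\,\widetilde{\Pi}^\infty_{R,A_n(G)}\to \mathrm{Proj}P_{A_n(G)}$, then base change) is visibly natural in $A$, forcing naturality of its quasi-inverse.
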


\begin{proof}
This will be further application of our established comparison above for $A_n(G)$ for each $n$.
\end{proof}

\indent And furthermore one could then further consider the following generality:

\begin{corollary}
We have that the following four categories are equivalent: \\
I. The category of families of quasicoherent finite locally free sheaves of $\mathcal{O}_{\mathrm{Proj}P_{A_\infty(G)\widehat{\otimes}_{\mathbb{Q}_p} A}}$-modules (namely the $A_\infty(G)\widehat{\otimes}_{\mathbb{Q}_p} A$-relative vector bundles) over the Fargues-Fontaine curve $\mathrm{Proj}P_{A_\infty(G)\widehat{\otimes}_{\mathbb{Q}_p} A}$;\\
II.The category of families of $A_\infty(G)\widehat{\otimes}_{\mathbb{Q}_p} A$-relative $\varphi^a$-modules over $\widetilde{\Pi}^\infty_{R,A_\infty(G)\widehat{\otimes}_{\mathbb{Q}_p} A}$;\\
III. The category of families of $A_\infty(G)\widehat{\otimes}_{\mathbb{Q}_p} A$-relative $\varphi^a$-modules over $\widetilde{\Pi}_{R,A_\infty(G)\widehat{\otimes}_{\mathbb{Q}_p} A}$;\\
IV. The category of families of $A_\infty(G)\widehat{\otimes}_{\mathbb{Q}_p} A$-relative $\varphi^a$-bundles over $\widetilde{\Pi}_{R,A_\infty(G)\widehat{\otimes}_{\mathbb{Q}_p} A}$.
\end{corollary}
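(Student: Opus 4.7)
The plan is to reduce the statement to the finite-level case and then pass to the inverse limit, exactly as was done for the previous proposition. The key observation is that $A_\infty(G)\widehat{\otimes}_{\mathbb{Q}_p} A = \varprojlim_n \bigl(A_n(G)\widehat{\otimes}_{\mathbb{Q}_p} A\bigr)$, and each algebra $A_n(G)\widehat{\otimes}_{\mathbb{Q}_p} A$ is again a reduced affinoid algebra over $\mathbb{Q}_p$ (in the sense relaxed by the earlier remark, namely that it is a reduced affinoid in the Tate sense or at worst an inverse limit of such). Consequently, the period rings behave coherently: for each $\sharp \in \{\mathrm{int},\mathrm{bd},\infty,r,I,\text{unadorned}\}$ one has
\begin{displaymath}
\widetilde{\Pi}^{\sharp}_{R,A_\infty(G)\widehat{\otimes}_{\mathbb{Q}_p} A} = \varprojlim_n \widetilde{\Pi}^{\sharp}_{R,A_n(G)\widehat{\otimes}_{\mathbb{Q}_p} A},
\end{displaymath}
and analogously for the graded ring $P_{R,A_\infty(G)\widehat{\otimes}_{\mathbb{Q}_p} A}$.

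First I would invoke \cref{corollary7.6} at each finite level $n$, applied to the reduced affinoid algebra $A_n(G)\widehat{\otimes}_{\mathbb{Q}_p} A$ in place of $A$. This produces, for every $n$, a quadruple of equivalences between finite locally free sheaves on $\mathrm{Proj}\,P_{A_n(G)\widehat{\otimes}_{\mathbb{Q}_p} A}$, Frobenius modules on $\widetilde{\Pi}^\infty_{R,A_n(G)\widehat{\otimes}_{\mathbb{Q}_p} A}$, Frobenius modules on $\widetilde{\Pi}_{R,A_n(G)\widehat{\otimes}_{\mathbb{Q}_p} A}$, and Frobenius bundles on $\widetilde{\Pi}_{R,A_n(G)\widehat{\otimes}_{\mathbb{Q}_p} A}$. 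Next I would verify that these equivalences are compatible with the transition maps $A_{n+1}(G)\widehat{\otimes}_{\mathbb{Q}_p} A \to A_n(G)\widehat{\otimes}_{\mathbb{Q}_p} A$: both the base change functors and their quasi-inverses (essentially the Frobenius invariants functor constructed in the proof of \cref{corollary7.6}) commute with base change, because they are built from constructions (localization by homogeneous $f$, taking Frobenius invariants, taking global sections on affine opens) that are themselves preserved under flat base change and inverse limits.

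Having checked the compatibility, the families of objects on each side of the equivalence are by definition projective systems along these transition maps satisfying the base-change condition, so the level-wise equivalences assemble to an equivalence of the projective systems and therefore of the categories of families. The global-section functors $\varprojlim_n(-)$ on both sides intertwine these equivalences, establishing the desired comparison at the Fr\'echet-Stein $A_\infty(G)\widehat{\otimes}_{\mathbb{Q}_p} A$-relative level.

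The main obstacle is the compatibility step: one must be sure that the equivalences produced at level $n$ by \cref{corollary7.6}, which involve choosing generators at various radii and running the approximation arguments from \cref{proposition2.8} and \cref{proposition1}, can be arranged to commute with the base change $A_{n+1}(G)\widehat{\otimes}_{\mathbb{Q}_p} A \to A_n(G)\widehat{\otimes}_{\mathbb{Q}_p} A$. The point here is that the quasi-inverse functor is canonically defined (Frobenius invariants followed by descent along an open cover by $\mathrm{Spec}\,P[1/f]_0$-type pieces), so it automatically commutes with arbitrary base change, and this forces the base-change compatibility of the forward direction as well. With this in hand, no further work beyond invoking the previous proposition is required.
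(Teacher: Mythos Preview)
Your proposal is correct and follows exactly the approach the paper intends: the paper states this as an immediate corollary with no proof, relying on the same reduction to finite levels $A_n(G)\widehat{\otimes}_{\mathbb{Q}_p}A$ and invocation of \cref{corollary7.6} that you spell out. Your extra discussion of base-change compatibility of the quasi-inverse is a reasonable elaboration of what the paper leaves implicit.
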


\indent Then we will study in some generality established below the cohomology which will generalize the situation in \cite[Proposition 6.3.19]{KL15}. In our setting actually one could consider more general $A$. The algebras we are considering in the relative setting will be again the general Iwasawa theoretic level, namely the Fr\'echet algebra $A\widehat{\otimes}A_\infty(G)$. One can study the corresponding complexes of the Frobenius modules, or one can study the corresponding complexes of the corresponding families of the Frobenius modules and bundles.

\begin{proposition}
Consider now a $\varphi^a$-module $M$ over $\widetilde{\Pi}^\infty_{R,A}$ (which is always assumed to be finite projective). Then we have the following statements in our situation which generalize the corresponding results in \cite[Proposition 6.3.19]{KL15}:\\
I. Now consider the following commutative diagram:
\[
\xymatrix@R+2pc@C+2pc{
0 \ar[r]\ar[r]\ar[r] &M \ar[d]\ar[d]\ar[d]
\ar[r]^{\varphi^a-1}\ar[r]\ar[r] &M \ar[d]\ar[d]\ar[d]
\ar[r]\ar[r]\ar[r] &0\\
0 \ar[r]\ar[r]\ar[r] &M_r
\ar[r]^{\varphi^a-1}\ar[r]\ar[r] &M_{r/q}
\ar[r]\ar[r]\ar[r] &0.
}
\]
Then we have that the vertical maps give rise to the quasi-isomorphism of the two complexes involved;\\
II. In our situation, the natural base change $M\rightarrow M\otimes_{\widetilde{\Pi}^\infty_{R,A}}\widetilde{\Pi}_{R,A}$ gives rise to a quasi-isomorphism;\\
III. Consider the following commutative diagram for some interval $[s,r]$:
\[
\xymatrix@R+2pc@C+2pc{
0 \ar[r]\ar[r]\ar[r] &M \ar[d]\ar[d]\ar[d]
\ar[r]^{\varphi^a-1}\ar[r]\ar[r] &M \ar[d]\ar[d]\ar[d]
\ar[r]\ar[r]\ar[r] &0\\
0 \ar[r]\ar[r]\ar[r] &M_{[s,r]}
\ar[r]^{\varphi^a-1}\ar[r]\ar[r] &M_{[s,r/q]}
\ar[r]\ar[r]\ar[r] &0.
}
\]
Then again we have that the vertical map is a quasi-isomorphism of the two complexes involved. Here the corresponding radii satisfy the condition that $0<s\leq r/q$.
\end{proposition}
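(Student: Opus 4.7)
The plan is to establish all three quasi-isomorphisms by a uniform strategy: verify that the induced maps on $H^0$ and $H^1$ of the Frobenius complexes are isomorphisms. The common mechanism exploits the Frobenius structure on $M$ together with the norm estimates of \cref{proposition2.8}, \cref{prop2.9}, and \cref{proposition1}, and the equivalence of categories of \cref{corollary7.6}, which allows one to transfer freely between modules over $\widetilde{\Pi}^\infty_{R,A}$, $\widetilde{\Pi}_{R,A}$, $\widetilde{\Pi}^r_{R,A}$, and the bundle formalism.

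For Part I, the $H^0$ comparison reduces to showing that $M^{\varphi^a=1}$ maps isomorphically onto $\ker(\varphi^a - 1 \colon M_r \to M_{r/q})$. Injectivity follows from flatness of the base change $M \hookrightarrow M_r$ (both sides being finite projective). For surjectivity, given $x \in M_r$ with $\varphi^a(x) = x$ in $M_{r/q}$, the Frobenius comparison isomorphism built into \cref{Def2.6} extends $x$ to compatible sections on all $M_{r/q^n}$; the equivalence of \cref{corollary7.6} then assembles this compatible family into a genuine element of $M$. For $H^1$, surjectivity of the induced map on cokernels follows from \cref{prop2.9}: one twists by a sufficiently large integer $n$ so that $\varphi^a - 1$ becomes surjective on the relevant annular slices, then undoes the twist. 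Injectivity is handled by an iterative Frobenius extension, promoting a preimage in $M_r$ to one in $M$ through convergence estimates parallel to those in the proof of \cref{prop2.9}.

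Part II follows from Part I by noting that $M \otimes_{\widetilde{\Pi}^\infty_{R,A}} \widetilde{\Pi}_{R,A} = \bigcup_r M_r$ as a filtered union, so the kernel and cokernel of $\varphi^a - 1$ on this module are the filtered unions over $r$ of the corresponding groups for the pairs $(M_r, M_{r/q})$, all of which coincide with those of $M \xrightarrow{\varphi^a-1} M$ by Part I. Part III adapts Part I almost verbatim to the closed interval $[s,r]$ with $s \leq r/q$: the closed-interval Frobenius comparison isomorphism from \cref{Def2.6} supports the iteration argument for $H^0$, while \cref{prop2.9} governs the $H^1$ surjectivity. The principal obstacle across all parts is the $H^1$-surjectivity, which requires adapting the iterative Frobenius-convergence argument from the proof of \cref{prop2.9} to the $A$-relative setting. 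The completed tensor product norms $\|\cdot\|_{\alpha^t, A}$ respect the Frobenius (which acts trivially on the $A$-factor), so the controlling constants extract uniformly in $A$; the delicate bookkeeping is in choosing the twist parameter uniformly over the range of radii needed to glue local lifts into a global section, which follows by a direct adaptation of the argument of \cite[Proposition 6.3.19]{KL15}.
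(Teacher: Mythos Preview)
Your overall strategy matches the paper's: check $H^0$ and $H^1$ directly, deduce II from I (and III) by passing to the limit, and handle the $H^0$ surjectivity by Frobenius iteration. Two points deserve correction or comment.

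First, in your $H^0$ surjectivity argument for Part I you iterate in the wrong direction. Given $v\in M_r$ with $\varphi^a(v)=v$ in $M_{r/q}$, the element already lives over $(0,r]$; to promote it to $M$ over $\widetilde{\Pi}^\infty_{R,A}$ you must extend toward \emph{larger} radii. The semilinear Frobenius gives $\varphi^a:M_{qr}\to M_r$, so one sets $v=\varphi^{-na}(v)\in M_{q^nr}$ for all $n\geq 1$ and thereby obtains a compatible element of $M$. Your proposal instead extends to $M_{r/q^n}$, which adds nothing new. The paper invokes exactly this $v=\varphi^{-na}v$ bootstrap (and the same device handles III).

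Second, for $H^1$ the paper takes a shorter route than your twist-and-untwist via \cref{prop2.9}. It interprets $H^1_{\varphi^a}$ as classifying extensions of $\varphi^a$-modules and then invokes the equivalence of categories of \cref{corollary7.6}: an extension of the trivial object by $M$ over $\widetilde{\Pi}^r_{R,A}$ (or over $\widetilde{\Pi}^{[s,r]}_{R,A}$) lifts uniquely to one over $\widetilde{\Pi}^\infty_{R,A}$, giving both injectivity and surjectivity on $H^1$ at once. Your approach of twisting by $M(n)$ to force surjectivity of $\varphi^a-1$ and then ``undoing the twist'' is problematic as stated, since \cref{prop2.9} only controls $M(n)$ for $n$ large and there is no evident mechanism to descend the resulting statement back to $M$ itself; the extension-lifting argument avoids this entirely.
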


\begin{proof}
Largely this follows the same idea as in \cite[Proposition 6.3.19]{KL15} as below. The resulting complexes in our situation are those in the derived category of modules over ring $A$. 
We only need to prove I and III, then taking the limit will give us the second statement. Now for the first statement, as in the situation of \cite[Proposition 6.3.19]{KL15} the injectivity of both the maps
\begin{displaymath}
H_{\varphi^a_{}}^0(M)\rightarrow H_{\varphi^a_{}}^0(M_{r}),
\end{displaymath}
\begin{displaymath}
H_{\varphi^a_{}}^0(M)\rightarrow H^0_{\varphi^a}(M_{I})	
\end{displaymath}
follows due to the property of the finite projectiveness. Then we consider the corresponding surjectivity. In this case, we just need to consider as in \cite[Proposition 6.3.19]{KL15} the equation:
\begin{displaymath}
v=\varphi^{-na}v	
\end{displaymath}
for each $n\geq 1$, for instance considering those $v\in H_{\varphi^a_{}}^0(M_{r})$ and considering this will give rise to some element in $M_{q^{n}r}$, then taking the bootstrap like this to derived the result, which is the same for the situation in III. Then for the first order cohomology, one could just follow the idea in \cite[Proposition 6.3.19]{KL15} with the help of the previous comparison to lift the corresponding extension. 
We do not repeat the argument again.
\end{proof}

\begin{remark}
Here we only consider the corresponding comparison carrying the deformation to $A$ instead of more general situation where the deformation is over simultaneously $A$ and $A_\infty(G)$. But one could easily have the idea on what should be established in more general setting.	
\end{remark}


%
%
%
%
%

\newpage

\section{Hodge-Iwasawa Sheaves}

\subsection{Constructible $p$-adic Iwasawa Sheaves}

\indent It is actually quite natural to consider the corresponding constructible $p$-adic sheaves in Iwasawa theory as those considered in \cite{Wit1} on the interpolation of $L$-functions after Grothendieck from \cite{SGA5}. On the other hand, in the situation where one considers the Weil conjectures, it is actually more natural to consider both constructible and non-\'etale objects as in all types of Weil-II cnsidered by Deligne \cite{DWe1} and \cite{DWe2}, Kedlaya \cite{KWe2}, Caro, Abe \cite{CAWe2} and etc. Therefore we actually would like to consider the commutative equivariant version of the Hodge sheaves with Hodge structures considered in \cite{KL15}. We will not consider the generality just as in the previous section here, but we will consider the generality instead more general than \cite{KP1}.

\begin{setting}
All the flat constructible $p$-adic sheaves (both the $\mathbb{Z}_p$ ones or the $\mathbb{Z}_p$-isogeny ones), and all the Frobenius modules over the period Hodge sheaves will be considered in terms of some Iwasawa deformations, with the deformation in the corresponding reduced affinoids $A$ over $\mathbb{Q}_p$, or some adic ring $T$ (we will consider the situation considered by Witte in his noncommutative projects \cite{Wit1} and \cite{Wit3}), or a general Fr\'echet-Stein algebra $A_\infty(G)$ attached to some nice group $G$. In this section we assume that each adic ring is defined over $\mathbb{Z}_p$ and each quotient $T/I$ by some open two-sided ideal $I$ is of order a power $p^n$ of $p$.
\end{setting}

\begin{remark}
The Fr\'echet-Stein deformation over a point is essentially the one proposed by Kedlaya-Pottharst \cite{KP1}, which means that actually it is very important to consider the corresponding $\mathrm{DfmLie}$ namely the $p$-adic Lie deformation of the corresponding relative $A$ Hodge-Frobenius sheaves over the corresponding pro-\'etale site of a pre-adic space $X$, which will be a natural generalization of the corresponding constructions proposed in \cite{KP1} and established in \cite{KL15}.
\end{remark}

\indent We now start from the spaces in our setting, namely the pre-adic spaces $X$ over $\mathbb{Q}_p$ in the sense of those considered in \cite{KL15}. Then we will consider the corresponding homotopical categories of $p$-adic sheaves over the sites $X_{\text{\'et}}$ and $X_{\text{pro\'et}}$. As the setting considered by Witte in \cite{Wit1} and \cite{Wit3}, we will also consider the families of complexes with finite coefficients parametrized by the corresponding set of radicals $\mathfrak{J}$ of an adic ring $T$. \\

\indent In the style of the setting in Grothendieck's SGA IV and V, Witte studies the corresponding Iwasawa theory in the Fukaya-Kato setting, which is actually noncommutative. This is really general since the corresponding coefficients of the sheaves are actually in the adic ring mentioned above. The corresponding interpolation happens in the corresponding algebraic $K$-groups associated to some Waldhausen category. Waldhausen categories are natural places where one could do algebraic $K$-theory through Waldhausen $S$-constructions. We work out some generalization to the setting in our situations. We will consider from the following generality:

\begin{definition} \mbox{\bf (After Witte, \cite{Wit1},\cite[Chapter 5]{Wit3})}
Let $\mathfrak{J}$ be the set of all the two-sided ideals open of the adic ring $T$. Let $\mathbb{D}_\mathrm{perf}(X,T)$ denote the following category. Each object $(\mathcal{M}^\bullet_I)_{I\in \mathfrak{J}}$ of the category is now a family (to be more precise an inverse system) of the corresponding perfect (which are quasi-isomorphic to those strictly-perfect ones) complexes of constructible flat \'etale sheaves over $X_{\text{\'et}}$ parametrized by the corresponding open two-sided ideals of the adic ring $T$, such that: I. For each member of such family $(\mathcal{M}^\bullet_I)_{I\in \mathfrak{J}}$, we have that the complex $\mathcal{M}^\bullet_I$ is now supposed to be dg-flat over the ring $T/I$ (degreewise flat and the tensor product with any acyclic complex will be again acyclic, note here that the corresponding tensor product is taken over the ring $T/I$ when we are talking about each individual complex in a single family); II. We have the corresponding transition map $\psi_{I,J}:\mathcal{M}^\bullet_I\rightarrow \mathcal{M}^\bullet_J$ for any two open two-sided ideals $I\subset J$ with the basic base change requirement:
\begin{displaymath}
\mathcal{M}^\bullet_I\otimes_{T/I}T/J\overset{\sim}{\longrightarrow} \mathcal{M}^\bullet_J.	
\end{displaymath}
All the modules over any quotient $T/I$ is assumed to be left $T/I$-modules, while note that then the tensor product will happen when we have another right $T/I$-module.
\end{definition}

\begin{remark}
The information of the homotopy type comes from the weak-equivalence which is defined to be the quasi-isomorphism and the cofibration which is defined to be the injection with kernel in the original category.	
\end{remark}

\indent Note that Fukaya-Kato originally considered Deligne's virtual categories (strictly speaking through more transparent approach) where they formulated their celebrated $\zeta$-isomorphism conjecture and the local $\varepsilon$-conjectures. Witte used Waldhausen construction to give more $K$-theoretic pictures, which could be regarded as some reinterpretation of the Fukaya-Kato's picture in the corresponding homotopical categories. We are somehow deeply inspired by this point of view. Concentrating everything in the zero-th degree we have the following definition:

\begin{definition}
We consider the subcategory $\mathbb{D}^\mathrm{const}_{\mathrm{perf}}(X,T)$ of $\mathbb{D}_{\mathrm{perf}}(X,T)$ which consists of all the families of complexes degenerated to zeroth degree of flat construcible sheaves over $X_\text{\'et}$, again parametrized by elements in $\mathfrak{J}$, which to be more precise means that the corresponding coefficients will live in the coefficient ring taking the form of $T/I$. For each element we will then use the corresponding notation $(\mathcal{F}^\bullet_I)_{I\in \mathfrak{J}}$ to denote each family of flat constructible \'etale sheaves.
\end{definition}

This subcategory plays the role exactly the same as those flat constructible $\ell$-adic sheaves considered by Witte's picture following Deligne's construction and equivariant Abe-Caro's arithmetic $D$-module point of views. If one is more Hodge theoretic, then one should degenerate again to define:

\begin{definition}
We will use the notation $\mathbb{D}^\mathrm{const,smooth}_{\mathrm{perf}}(X,T)$ to denote the subcategory of $\mathbb{D}^\mathrm{const}_{\mathrm{perf}}(X,T)$ consists of all the families of complexes degenerated to zeroth degree of \'etale local systems $(\mathbb{L}_I)_{I\in \mathfrak{J}}$ parametrized by the elements in $\mathfrak{J}$.
\end{definition}

\indent These will be generalization of our consideration in the previous section, but restricting to the integral $p$-adic Hodge theory.

\begin{definition}
We call each object $(\mathbb{L}_I)_{I\in \mathfrak{J}}$ in the corresponding category
\begin{displaymath}
\mathbb{D}^\mathrm{const,smooth}_{\mathrm{perf}}(X,T)
\end{displaymath}
a noncommutative \'etale local system.
\end{definition}

\indent Then we could use this point of view to generalize the corresponding Fontaine's style equivalence in the previous section over perfectoid spaces immediately after \cite[Section 8.5]{KL15}:

\begin{proposition}
We have the following categories are equivalent for $X=\mathrm{Spa}(R,R^+)$ where $R$ is a perfect adic Banach uniform algebra over $\mathbb{F}_p$ with associated perfectoid space $X'=\mathrm{Spa}(A,A^+)$ under the perfectoid correspondence:\\
I. The category $\mathbb{D}^\mathrm{const,smooth}_{\mathrm{perf}}(X,T)$ of the noncommutative \'etale local systems over $X$;\\
II. The category $\mathbb{D}^\mathrm{const,smooth}_{\mathrm{perf}}(X',T)$ of the noncommutative \'etale local systems over $X'$.\\
\indent Moreover, we have that there is a fully faithful embedding from the category $\mathbb{D}^\mathrm{const,smooth}_{\mathrm{perf}}(X,T)$ (or $\mathbb{D}^\mathrm{const,smooth}_{\mathrm{perf}}(X',T)$ respectively) of all families of noncommmutative \'etale local systems over $X$ or $X'$ into the corresponding category of the families $(M_I)_I$ of $\varphi$-modules over $\widetilde{\Pi}_R^\mathrm{int}$ with deformation to $T$ where each $\mathcal{M}_{I}$ is a $\varphi$-module over $\widetilde{\Pi}_R^\mathrm{int}\otimes_{\mathbb{Z}_p}(T/I)$ in the usual sense such that we have the transition map $\psi_{I,J}$ as above with the additional base change properties. \\
\end{proposition}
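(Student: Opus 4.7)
Both assertions share the same structure: reduce to a statement at a single open ideal $I \in \mathfrak{J}$ using the single-coefficient theory of \cite[Section 8.5]{KL15}, then verify compatibility with the transition maps $\psi_{I,J}$ to assemble the result at the level of families. I will therefore handle the tilting equivalence (I vs II) and the fully faithful embedding into $\varphi$-modules separately but along parallel lines.

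For the equivalence between $\mathbb{D}^\mathrm{const,smooth}_\mathrm{perf}(X,T)$ and $\mathbb{D}^\mathrm{const,smooth}_\mathrm{perf}(X',T)$, I fix an ideal $I$ and note that $T/I$ is a finite ring of $p$-power order by the running assumption of the section. The perfectoid tilting equivalence of \'etale sites $X_\text{\'et} \simeq X'_\text{\'et}$ from \cite[Section 8.5]{KL15} produces an equivalence between lisse $T/I$-sheaves on $X$ and on $X'$. Compatibility with the transitions $\psi_{I,J}$ for $I \subset J$ holds because pullback along the tilting map commutes with the tensor product $-\otimes_{T/I} T/J$ that defines the transitions. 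Consequently the objectwise equivalences assemble into an equivalence between the pro-system categories, giving (I) $\simeq$ (II).

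For the fully faithful embedding into families of $\varphi$-modules, for each $I$ I would invoke the integral Kedlaya--Liu functor from \cite[Section 8.5]{KL15} (sending an \'etale $\mathbb{Z}_p$-local system to a $\varphi$-module over $\widetilde{\Pi}_R^\mathrm{int}$, then base-changed along $\mathbb{Z}_p \to T/I$) which produces a $\varphi$-module $D(\mathbb{L}_I)$ over $\widetilde{\Pi}_R^\mathrm{int} \otimes_{\mathbb{Z}_p} T/I$ and is fully faithful at each fixed coefficient level. The key compatibility needed to upgrade this to a functor on pro-systems is the identification $D(\mathbb{L}_I) \otimes_{T/I} T/J \cong D(\mathbb{L}_J)$, which follows because $D$ commutes with coefficient base change (it is an invariants construction tensored with a flat coefficient map). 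This places the image in the pro-system category of $\varphi$-modules satisfying the required base-change condition. Full faithfulness on families then follows from full faithfulness at each $I$ together with the description of morphism sets on both sides as the inverse limit of the objectwise morphism spaces, which behaves well because each individual $\mathrm{Hom}$ set is finite (the local systems are of finite rank and $T/I$ is finite).

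The main obstacle I expect is tracking the base-change compatibility of $D$ across $T/I \to T/J$ and verifying that it intertwines with $\psi_{I,J}$: this is precisely what ensures the image satisfies the ``additional base change properties'' and what lifts the objectwise Kedlaya--Liu statement to a functor between pro-system categories. Once this functoriality is in place the rest is essentially routine, since the single-coefficient statements are exactly those of \cite[Section 8.5]{KL15} and the inverse-limit passage does not trigger any Mittag-Leffler obstruction in this finite-length setting.
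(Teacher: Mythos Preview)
Your proposal is correct and follows exactly the approach the paper takes: the paper's proof is a single sentence stating that the result is derived from the corresponding results without Iwasawa deformation in \cite[Section 8.5]{KL15}, and your plan is precisely an expansion of how that reduction to the single-ideal case works together with the compatibility checks for the transition maps $\psi_{I,J}$. Your additional care with base-change compatibility and the inverse-limit passage simply makes explicit what the paper leaves implicit.
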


\begin{proof}
In this situation this is derived from the corresponding result for the categories without Iwasawa deformation as proved in \cite[Section 8.5]{KL15}.	
\end{proof}

\begin{definition}
We define the following key categories over the site $X_{\text{pro\'et}}$ just as in the way as above by replacing $X$ with $X_{\text{pro\'et}}$:
\begin{displaymath}
\mathbb{D}^\mathrm{const,smooth}_{\mathrm{perf}}(X_{\text{pro\'et}},T)\subset \mathbb{D}^\mathrm{const}_{\mathrm{perf}}(X_{\text{pro\'et}},T)\subset \mathbb{D}_{\mathrm{perf}}(X_{\text{pro\'et}},T).\\
\end{displaymath}	
\end{definition}

\indent Then we glue everything in the previous two propositions over the corresponding pro-\'etale sites. Let $Y\in X_{\text{pro\'et}}$ be a perfectoid subdomain in the basis of the neighbourhood of this site, as in \cite[Section 9.2]{KL15} taking the form of $U=\varprojlim_{n\rightarrow \infty} U_n$. Then we use the corresponding notations $\mathcal{O}_X$, $\mathcal{O}^+_X$, $\mathcal{O}^\circ_X$ to denote the corresponding sheaves evaluating over $U$ as in \cite[Section 9.2]{KL15} as below:
\begin{align}
\mathcal{O}_X(U):=\varinjlim_{n} \mathcal{O}_{U_n}(U_n)\\
\mathcal{O}^+_X(U):=\varinjlim_{n} \mathcal{O}^+_{U_n}(U_n)\\
\mathcal{O}^\circ_X(U):=\varinjlim_{n} \mathcal{O}^\circ_{U_n}(U_n).
\end{align}

Recall that from \cite[Section 9.2]{KL15} we have the corresponding completed sheaves which we use the notations $\widehat{\mathcal{O}}_X,\widehat{\mathcal{O}}_X^+$ to denote them, then under the perfectoid correspondence we have the corresponding version of the sheaves over the pro-\'etale sites. Recall that these are denoted in \cite[Section 9.2]{KL15} by $\overline{\mathcal{O}}_X$ and $\overline{\mathcal{O}}_X^+$ following Andreatta-Iovita, by taking suitable tilting under the evaluation on any perfectoid subdomain $U$ as above. Then we are now at the position to glue the previous proposition over $X_{\text{pro\'et}}$.

\begin{definition}
Following \cite[Section 9.3]{KL15}, we are going to use the notation $\widetilde{\mathbf{A}}_X\widehat{\otimes}_{\mathbb{Z}_p}T$ and the notation $\widetilde{\mathbf{A}}_X\otimes_{\mathbb{Z}_p}(T/I)$ to be the sheafifications of the corresponding presheaves over $X_{\text{pro\'et}}$ sending each $U\in X_{\text{pro\'et}}$ to
\begin{align} 
W(\overline{\mathcal{O}}_X(U))\widehat{\otimes}_{\mathbb{Z}_p}T,
W(\overline{\mathcal{O}}_X(U))\otimes_{\mathbb{Z}_p}(T/I),
\end{align}
for each element $I\in \mathfrak{J}$. And we use the notations:
\begin{displaymath}
\widetilde{\mathbf{A}}^\dagger_X\widehat{\otimes}_{\mathbb{Z}_p}T,\widetilde{\mathbf{A}}^\dagger_X\otimes_{\mathbb{Z}_p}(T/I)
\end{displaymath}
to denote the corresponding sheafification of the presheaves sending each element $U$ in $X_{\text{pro\'et}}$ to:
\begin{align} 
\widetilde{\Pi}^\mathrm{int}_{\overline{\mathcal{O}}_X(U)}\widehat{\otimes}_{\mathbb{Z}_p}T,
\widetilde{\Pi}^\mathrm{int}_{\overline{\mathcal{O}}_X(U)}\otimes_{\mathbb{Z}_p}(T/I).
\end{align}
\end{definition}

\indent Then we have the following generalization of our previous proposition to more general setting over the pro-\'etale sites, again immediately after \cite[Theorem 9.3.7]{KL15} and its proof.

\begin{proposition}
We have the following categories are equivalent for $X$ defined above (as in the previous proposition):\\
I. The category $\mathbb{D}^\mathrm{const,smooth}_{\mathrm{perf}}(X_{\text{pro\'et}},T)$ of the noncommutative \'etale local systems over $X_{\text{pro\'et}}$;\\
II. The category $\mathbb{D}^\mathrm{const,smooth}_{\mathrm{perf}}(X'_{\text{pro\'et}},T)$ of the noncommutative \'etale local systems over $X_{\text{pro\'et}}'$.\\
 \indent Moreover, we have that there is a fully faithful embedding from the category $\mathbb{D}^\mathrm{const,smooth}_{\mathrm{perf}}(X_{\text{pro\'et}},T)$ (or $\mathbb{D}^\mathrm{const,smooth}_{\mathrm{perf}}(X'_{\text{pro\'et}},T)$ respectively) of all families of noncommmutative pro-\'etale local systems over $X$ or $X'$ into the corresponding category of the families $(M_I)_I$ of $\varphi$-modules over $\widetilde{\mathbf{A}}^\dagger_X$ with deformation to $T$ where each $\mathcal{M}_{I}$ is a $\varphi$-module over $\widetilde{\mathbf{A}}^\dagger_X\otimes_{\mathbb{Z}_p}(T/I)$ in the usual sense such that we have the transition map $\psi_{I,J}$ as above with the additional base change properties. \\
\end{proposition}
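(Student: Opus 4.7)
The plan is to bootstrap from the previous proposition, which is the pointwise (single perfectoid space) version, by descent along the pro-\'etale site in the style of \cite[Theorem 9.3.7]{KL15}. First I would fix a basis of $X_{\text{pro\'et}}$ consisting of perfectoid subdomains $U=\varprojlim_{n} U_n$ of the form used in \cite[Section 9.2]{KL15}, and similarly a compatible basis of $X'_{\text{pro\'et}}$ via the perfectoid correspondence. Over each such $U$, for each open two-sided ideal $I\in\mathfrak{J}$, the previous proposition applied to the perfectoid datum $(\overline{\mathcal{O}}_X(U),\overline{\mathcal{O}}^+_X(U))$ provides the equivalence of flat constructible \'etale local systems with coefficients in $T/I$ on the two sides, together with the fully faithful embedding into $\varphi$-modules over $\widetilde{\Pi}^{\mathrm{int}}_{\overline{\mathcal{O}}_X(U)}\otimes_{\mathbb{Z}_p}(T/I)$.

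Second I would check that these equivalences sheafify to give the desired pro-\'etale statement. The equivalence between I and II is functorial in $U$, since tilting commutes with restriction along morphisms in the pro-\'etale site and with tensoring by $T/I$ over $\mathbb{Z}_p$; hence the transition maps $\psi_{I,J}\colon\mathcal{F}_I\rightarrow\mathcal{F}_J$ of an object of $\mathbb{D}^{\mathrm{const,smooth}}_{\mathrm{perf}}(X_{\text{pro\'et}},T)$ are transported to their analogs on the tilted side, and the base-change compatibility is preserved. For the embedding into $\varphi$-modules over $\widetilde{\mathbf{A}}^\dagger_X$, I would promote each local $\varphi$-module over $\widetilde{\Pi}^{\mathrm{int}}_{\overline{\mathcal{O}}_X(U)}\otimes_{\mathbb{Z}_p}(T/I)$ produced by the previous proposition to a global sheaf-theoretic $\varphi$-module over $\widetilde{\mathbf{A}}^\dagger_X\otimes_{\mathbb{Z}_p}(T/I)$ through the sheafification procedure used in the definition of $\widetilde{\mathbf{A}}^\dagger_X$ just above the proposition.

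Third, fully faithfulness follows from computing Homs pro-\'etale-locally and reducing to the perfectoid case, where it is supplied by the previous proposition. The essential image consists of those inverse systems $(M_I)_I$ of $\varphi$-modules satisfying the base change isomorphism $M_I\otimes_{T/I}(T/J)\overset{\sim}{\rightarrow}M_J$, which is forced by the analogous condition on $(\mathcal{F}_I)_I$ together with the flatness of each $M_I$ over $T/I$ inherited from the dg-flatness hypothesis on the complexes in our category.

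The main obstacle I anticipate is verifying pro-\'etale descent for the sheaves $\widetilde{\mathbf{A}}^\dagger_X\otimes_{\mathbb{Z}_p}(T/I)$ and their compatibility with the base change maps $T/I\rightarrow T/J$. In the untwisted situation this is packaged in \cite[Section 9.3]{KL15} through the acyclicity of $\widetilde{\mathbf{A}}^\dagger_X$ on the perfectoid basis; here one must further observe that each quotient $T/I$ is finite as a $\mathbb{Z}_p$-module (since $|T/I|$ is a power of $p$ by our assumption), so that tensoring with $T/I$ over $\mathbb{Z}_p$ preserves the relevant higher pro-\'etale acyclicity and commutes with the formation of global sections on the basis. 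Once this input is secured, gluing of the local equivalences across the inverse system $(I\in\mathfrak{J})$ produces the desired global equivalence and the fully faithful embedding into inverse systems of $\varphi$-modules over $\widetilde{\mathbf{A}}^\dagger_X$.
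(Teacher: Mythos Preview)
Your proposal is correct and follows the same approach as the paper, which simply records that the statement follows ``immediately after \cite[Theorem 9.3.7]{KL15} and its proof'' without further elaboration. You have spelled out explicitly the descent mechanism along the perfectoid basis of $X_{\text{pro\'et}}$ and the finiteness of $T/I$ that makes the tensoring harmless, which is precisely what underlies the cited result; in effect you have written the proof that the paper only points to.
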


\subsection{Noncommutative-Equivariant $K$-Theory in Waldhausen Categories}

\indent Witte used $K$-theory to have had formulated some conjectures related to the corresponding Fukaya-Kato's key conjectures in Deligne's virtual category. Note that we defined key categories in the previous subsection for our further study on the Hodge-Iwasawa theory in the integral setting:
\begin{displaymath}
\mathbb{D}^\mathrm{const,smooth}_{\mathrm{perf}}(X,T)\subset \mathbb{D}^\mathrm{const}_{\mathrm{perf}}(X,T)\subset \mathbb{D}_{\mathrm{perf}}(X,T).
\end{displaymath}

\indent Also we have the following pro-\'etale version of the corresponding categories:

\begin{displaymath}
\mathbb{D}^\mathrm{const,smooth}_{\mathrm{perf}}(X_{\text{pro\'et}},T)\subset \mathbb{D}^\mathrm{const}_{\mathrm{perf}}(X_{\text{pro\'et}},T)\subset \mathbb{D}_{\mathrm{perf}}(X_{\text{pro\'et}},T).
\end{displaymath}

In the corresponding $K$-theory it is actually natural to ask for instance whether we have the corresponding special isomorphisms or homotopies as considered by Fukaya-Kato and Witte in the category of rigid analytic spaces or more general Huber's adic spaces. For instance one can consider the corresponding category $\mathbb{D}_{\mathrm{perf}}(\mathrm{Spa}(\mathbb{Q}_p,\mathfrak{o}_{\mathbb{Q}_p}),T)$ over a point. We actually put our observation into the following conjecture which actually predicts that over rigid analytic spaces we have the corresponding well-defined Waldhausen theory where in the algebraic geometry this is conjectured and studied by Witte in \cite{Wit1}, \cite{Wit2} and \cite[Section 1.4]{Wit3}. First let us consider the following definition:

\begin{definition} \mbox{\bf (After Witte, \cite{Wit1},\cite[Chapter 5]{Wit3})}
Let $\mathfrak{J}$ be the set of all the two-sided ideals open of the adic ring $T$ where we assume that $p$ is a unit in $T$. Let $\mathbb{D}_\mathrm{perf}(X,T)$ denote the following category. Each object $(\mathcal{M}^\bullet_I)_{I\in \mathfrak{J}}$ of the category is now a family (to be more precise an inverse system) of the corresponding perfect (which are quasi-isomorphic to those strictly-perfect ones) complexes of constructible flat \'etale sheaves over $X_{\text{\'et}}$ parametrized by the corresponding open two-sided ideals of the adic ring $T$, such that: I. For each member of such family $(\mathcal{M}^\bullet_I)_{I\in \mathfrak{J}}$, we have that the complex $\mathcal{M}^\bullet_I$ is now supposed to be dg-flat over the ring $T/I$ (degreewise flat and the tensor product with any acyclic complex will be again acyclic, note here that the corresponding tensor product is taken over the ring $T/I$ when we are talking about each individual complex in a single family); II. We have the corresponding transition map $\psi_{I,J}:\mathcal{M}^\bullet_I\rightarrow \mathcal{M}^\bullet_J$ for any two open two-sided ideals $I\subset J$ with the basic base change requirement:
\begin{displaymath}
\mathcal{M}^\bullet_I\otimes_{T/I}T/J\overset{\sim}{\longrightarrow} \mathcal{M}^\bullet_J.	
\end{displaymath}
All the modules over any quotient $T/I$ is assumed to be left $T/I$-modules, while note that then the tensor product will happen when we have another right $T/I$-module. And we also have the corresponding categories in the pro-\'etale setting.
\end{definition}

\begin{remark}
Note that here we consider the corresponding situation where $p$ is a unit in the adic ring $T$ which is different from the previous section.	
\end{remark}

\begin{conjecture} \label{conjecture9.13} \mbox{\bf (After Witte)}
With the assumption in the previous definition, for $X$ a rigid analytic space over the $p$-adic field $\mathbb{Q}_p$ which is separated and of finite type (when considered as some adic space locally of finite type, see \cite{Huber1}) such that for $\sharp=\text{\'et},\text{pro\'et}$ the categories $\mathbb{D}_{\mathrm{perf}}(X_\sharp,T)$ could be endowed with the structure of Waldhausen categories, and we have there is a Waldhausen exact functor $R\Gamma(X_\sharp,.)$ (induced by the corresponding direct image functor as in \cite[Chapter 4-5]{Wit3}) as below for $\sharp=\text{\'et},\text{pro\'et}$:
\[
\xymatrix@R+0pc@C+3pc{
\mathbb{D}_{\mathrm{perf}}(X_\sharp,T)\ar[r]^{R\Gamma(X_\sharp,.)}\ar[r]\ar[r] &\mathbb{D}_{\mathrm{perf}}(T)
}
\]
which induces the corresponding the morphism $\mathbb{K}R\Gamma(X_\sharp,.)$ as:
\[
\xymatrix@R+0pc@C+3pc{
\mathbb{K}\mathbb{D}_{\mathrm{perf}}(X_\sharp,T)\ar[r]^{\mathbb{K}R\Gamma(X_\sharp,.)}\ar[r]\ar[r] & \mathbb{K}\mathbb{D}_{\mathrm{perf}}(T),
}
\]	
we conjecture that there is a homotopy between this morphism and the zero map:
\[
\xymatrix@R+0pc@C+3pc{
\mathbb{K}\mathbb{D}_{\mathrm{perf}}(X_\sharp,T)\ar[r]^{0}\ar[r]\ar[r] & \mathbb{K}\mathbb{D}_{\mathrm{perf}}(T).
}
\]
And we conjecture this will be compatible with the higher direct image functor $Rf_!$ between different spaces (see Huber's book \cite[Chapter 5]{Huber1}) and the changing coefficient morphism for a pair $T,T'$ of adic rings.
\end{conjecture}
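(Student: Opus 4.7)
The strategy is to mimic Witte's approach in the algebraic setting (see \cite{Wit1}, \cite{Wit3}) and reduce the rigid analytic case to the algebraic one via Huber's comparison theorems. My plan proceeds in four stages. First, I would equip $\mathbb{D}_{\mathrm{perf}}(X_\sharp,T)$ with a Waldhausen structure by declaring cofibrations to be level-wise injections with dg-flat cokernels that are compatible with the transition maps $\psi_{I,J}$, and weak equivalences to be level-wise quasi-isomorphisms. The axioms of a Waldhausen category are then verified componentwise, using that dg-flatness is preserved under pushouts of level-wise injections; the inverse-system indexed by $\mathfrak{J}$ introduces no new difficulty since the base-change constraint is itself required to be an isomorphism.

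Second, I would show that $R\Gamma(X_\sharp,\cdot)$ extends to a Waldhausen-exact functor. Since $p$ is a unit in $T$ and each quotient $T/I$ therefore has order prime to $p$, Huber's finiteness theorem (see \cite{Huber1}) applies level-wise to guarantee that $R\Gamma$ sends perfect complexes of constructible flat sheaves to perfect complexes over $T/I$. A model-categorical choice of functorial resolutions then makes $R\Gamma$ strictly compatible with cofibrations and weak equivalences. For $\sharp=\text{pro\'et}$, one first compares $R\Gamma_{\text{pro\'et}}$ with $R\Gamma_{\text{\'et}}$ on constructible torsion sheaves via Scholze-type arguments, which reduces the pro-\'etale case to the \'etale one at the level of the functor, though not necessarily at the level of Waldhausen structures.

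Third, I would construct the null-homotopy itself by reducing to Witte's algebraic theorem. Choose a finite cover of $X$ by affinoids $\{U_\alpha=\mathrm{Spa}(A_\alpha,A_\alpha^\circ)\}$, which exists by the separated-of-finite-type hypothesis. Each $U_\alpha$ receives a natural morphism from $\mathrm{Spec}(A_\alpha)$ via Huber's analytification, and Huber's comparison theorem identifies the \'etale cohomology of $U_\alpha$ with that of $\mathrm{Spec}(A_\alpha)$ when the coefficient torsion is prime to $p$. Applying Witte's theorem (\cite{Wit1}, \cite[Section 1.4]{Wit3}) yields null-homotopies $h_\alpha$ on each piece, and a \v{C}ech-style spectral sequence glueing argument combines these into a global null-homotopy $h$, compatible with the transition maps between different coefficient rings $T,T'$ as required by the compatibility clause of the conjecture.

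The main obstacle I anticipate is the coherent assembly of local null-homotopies into a genuine homotopy of $K$-theory spectra, rather than a mere null-homotopy at the level of connected components or homotopy groups. Witte resolves the analogous coherence problem in the algebraic case via Waldhausen's $S_\bullet$-construction with carefully chosen simplicial data; transferring this into the rigid analytic and pro-\'etale framework is delicate because the pro-\'etale site is not locally Noetherian and $T$ itself need not be Noetherian. A secondary obstacle is upgrading Huber's comparison from an equivalence of cohomology groups to an equivalence of Waldhausen categories that respects the full homotopical structure, which is needed to make the reduction to Witte's theorem genuinely homotopy-coherent rather than merely a statement about $\pi_0$. I also expect the compatibility with $Rf_!$ for morphisms between different spaces will require an additional base-change argument in the relative setting, which may demand proper-base-change-type input for the rigid analytic étale and pro-étale topologies with coefficients in $T$.
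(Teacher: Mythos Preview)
The paper does not prove this statement: it is stated as a \emph{conjecture} and left open. What the paper does establish is a much weaker analogue, namely the case $T=\mathbb{Z}_\ell$ with $\ell\neq p$, and even there under the standing assumption that the Waldhausen structure and the exact functor $R\Gamma$ exist. That proof does not proceed by reduction to Witte's algebraic theorem at all. Instead it pulls back along $\mathrm{Spa}(\mathbb{Q}_p^{\mathrm{ur}},\mathfrak{o}_{\mathbb{Q}_p^{\mathrm{ur}}})\to \mathrm{Spa}(\mathbb{Q}_p,\mathfrak{o}_{\mathbb{Q}_p})$, uses the Frobenius $\mathrm{Fr}$ on $\mathbb{Q}_p^{\mathrm{ur}}$ to build for each complex $M^\bullet$ a cofibration sequence
\[
R\Gamma(X_{\mathbb{Q}_p,\sharp},M^\bullet)\longrightarrow R\Gamma(X_{\mathbb{Q}_p^{\mathrm{ur}},\sharp},M^\bullet)\xrightarrow{\,1-\mathrm{Fr}\,} R\Gamma(X_{\mathbb{Q}_p^{\mathrm{ur}},\sharp},M^\bullet),
\]
and then applies Waldhausen's additivity theorem to force $\pi_i\mathbb{K}R\Gamma(X_\sharp,\cdot)=0$. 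The paper explicitly remarks that the $p$-adic case would require finiteness and cohomological-dimension input over $\mathbb{Q}_p^{\mathrm{ur}}$ and a reduction in the style of Fukaya--Kato, and leaves it as a further conjecture.

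Your third step has a genuine gap that would block the argument even in the $\ell$-adic case. You propose to cover $X$ by affinoids $U_\alpha=\mathrm{Spa}(A_\alpha,A_\alpha^\circ)$, invoke a Huber comparison between $U_\alpha$ and $\mathrm{Spec}(A_\alpha)$, and then apply Witte's theorem to $\mathrm{Spec}(A_\alpha)$. Two problems. First, Huber's comparison theorems are GAGA-type statements for the analytification of a scheme of finite type over a nonarchimedean field; there is no general identification of \'etale cohomology of an arbitrary affinoid $\mathrm{Spa}(A_\alpha,A_\alpha^\circ)$ with that of $\mathrm{Spec}(A_\alpha)$. Second, and more seriously, Witte's noncommutative Iwasawa main conjecture and the null-homotopy he constructs are for varieties over \emph{finite fields} (or over arithmetic bases with a Frobenius-type endomorphism available); $\mathrm{Spec}(A_\alpha)$ is a scheme over $\mathbb{Q}_p$, and Witte's result simply does not apply to it. There is no Frobenius or analogous endomorphism on $\mathrm{Spec}(A_\alpha)$ from which to extract the additivity input. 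So even granting all the Waldhausen-categorical infrastructure in your first two steps, the local null-homotopies $h_\alpha$ you need in step three do not exist by the route you suggest. The paper's Frobenius-on-$\mathbb{Q}_p^{\mathrm{ur}}$ argument is precisely the device that supplies such an endomorphism globally, without any appeal to an algebraic comparison.
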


\begin{remark}
We are actually not quite for sure how much higher categorical information one could see through the potential enrichment. But one might want to ask this even in the scheme situation. 
\end{remark}

\indent We would like to start from here (in the future consideration) to study the corresponding $K$-theoretic construction after the work of \cite{Wit1} and \cite{Wit3}. We would also like to start from in the future consideration the corresponding schematic version of the construction in Witte's thesis for instance in \cite{Wit3}, where \cite{Wit3} has already constructed the corresponding categories of \'etale sheaves and the corresponding functors and properties.

\begin{example}
Suppose now everything is just over the point associated to the point $\mathbb{Q}_p$, namely the space $\mathrm{Spa}(\mathbb{Q}_p,\mathfrak{o}_{\mathbb{Q}_p})$. Then the story collapses to the corresponding Galois cohomology and the corresponding Waldhausen category of the derived Galois representations, which is the Waldhausen categorical generalization of the usual Galois theoretic Tamagawa-Iwasawa theory. To be more precise we have first of all that the categories 
\begin{displaymath}
\mathbb{D}_{\mathrm{perf}}(\mathrm{Spa}(\mathbb{Q}_p,\mathfrak{o}_{\mathbb{Q}_p})_\sharp,T)	
\end{displaymath}
could be endowed with structure of Waldhausen categories as shown in \cite[Chapter 5, 6]{Wit3}. 
Moreover we have the following Waldhausen exact functor (see \cite[Chapter 5]{Wit3}):
\[
\xymatrix@R+0pc@C+0pc{
R\Gamma(\mathrm{Spa}(\mathbb{Q}_p,\mathfrak{o}_{\mathbb{Q}_p})_\sharp,.):\mathbb{D}_{\mathrm{perf}}(\mathrm{Spa}(\mathbb{Q}_p,\mathfrak{o}_{\mathbb{Q}_p})_\sharp,T)
\ar[r]\ar[r]\ar[r] &\mathbb{D}_{\mathrm{perf}}(T)
}
\]
which gives rise to the following map on the spectra:
\[
\xymatrix@R+0pc@C+0pc{
KR\Gamma(\mathrm{Spa}(\mathbb{Q}_p,\mathfrak{o}_{\mathbb{Q}_p})_\sharp,.):K\mathbb{D}_{\mathrm{perf}}(\mathrm{Spa}(\mathbb{Q}_p,\mathfrak{o}_{\mathbb{Q}_p})_\sharp,T)\ar[r]\ar[r]\ar[r] &K\mathbb{D}_{\mathrm{perf}}(T).
}
\]	

\end{example}

\begin{remark}
Recall that actually in the categories of the virtual objects, Nakamura obtains partial results on the existence of the corresponding $\varepsilon$-isomorphism for \'etale $(\varphi,\Gamma)$-modules, for more detail see \cite{Nak1} and \cite{Nak2}. We only expect that one can construct the corresponding homotopy for the Waldhausen categorical consideration. Therefore in the situation where $p$ is not a unit in $T$ we still conjecture that there should be such Waldhausen category and the corresponding induced maps on the $K$-theoretic spaces, and even the null-homotopy. Note that this is some generalization of the corresponding picture proposed in \cite{KP1} in the \'etale situation. Here we are considering derived \'etale $\varphi$-sheaves over the pro-\'etale sites in the corresponding Waldhausen categories. The situation of a point as above corresponding to $\mathbb{Q}_p$ is already interesting to some extent.	We will continue our discussion in this situation  below.
\end{remark}

\subsection{Hodge-Iwasawa Period Sheaves and Hodge-Iwasawa Vector Bundles}

\indent We consider here vector bundles over the corresponding adic version of the Fargues-Fontaine curves and consider the relationship between these and the ones we encountered in the last section. We are going to first recall the basics about the Fargues-Fontaine curves in the adic setting. We start from the following picture in the local situation, namely over some perfectoid ring $R$ which is some member in the basis of topology of an adic spaces in some well-established category.

\begin{setting}
We recall some adic geometry from \cite[Definition 8.7.4]{KL15} around the corresponding geometry of the adic version of the Fargues-Fontaine curve with respect to the algebra $R$ as in our discussion above, which is to be denoted as in \cite[Definition 8.7.4]{KL15} $\mathrm{FF}_R$. For the convenience of the readers we recall the basic construction of the curves on the level of the adic spaces. In this local setting the curve $\mathrm{FF}_R$ is defined to be the quotient of the corresponding adic space:
\begin{displaymath}
O_R:=\bigcup_{0<s<r}\mathrm{Spa}(\widetilde{\Pi}^{[s,r]}_R,\widetilde{\Pi}^{[s,r],+}_R).	
\end{displaymath}
See \cite[Definition 8.7.4]{KL15}. Then by the basic construction we could have the chance to find a link between the adic Fargues-Fontaine curve and our originally considered schematic Fargues-Fontaine curve. To be more precise, we consider the following map:
\begin{displaymath}
P_R\rightarrow \widetilde{\Pi}^I_R	
\end{displaymath}
which naturally induces a morphism in the category of locally ringed spaces:
\begin{displaymath}
\mathrm{FF}_R\rightarrow \mathrm{Proj}P_R	
\end{displaymath}
coming from the one attached to the space $O_R$. Then we could consider the Iwasawa deformation of the vector bundles over these different locally-ringed spaces in the large categories of locally-ringed spaces.
\end{setting}

%
%
%

\indent The corresponding deformed version of the adic Fargues-Fontaine curves could be defined in the same way. But note that here we need to require the corresponding deformed period rings are sheafy in the category of adic rings. This is in general not a trivial statement. For instance if we are in the situation where the affinoid algebra $A$ is a Tate algebra then we know that this is indeed the case. However in the full generality this is not that easy to see to be true. We now choose to work around this by just putting some assumption on the algebra $A$ which is already applicable enough in some interesting enough situations. Recall from \cite{KH} that when a ring is sousperfectoid then we should have the following nice result. Recall that (from \cite[Introduction]{KH}) a Huber ring $S$ is called sousperfectoid if it admits a split continuous morphism to a perfectoid Huber ring $P$. Note that this morphism is considered in the category of $S$-modules which are topological.

\begin{proposition} \mbox{\bf (Kedlaya-Hansen)}
The sousperfectoid rings are sheafy.	
\end{proposition}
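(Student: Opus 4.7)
The plan is to reduce sheafyness of $S$ to sheafyness of the ambient perfectoid $P$ by exploiting the topological splitting. Recall that to show a Huber ring $S$ is sheafy, it suffices by standard descent arguments to verify that for every rational covering $\mathrm{Spa}(S,S^+)=\bigcup_{i=1}^n U_i$ the augmented \v{C}ech complex
\begin{align}
0\to S\to \prod_i \mathcal{O}(U_i)\to \prod_{i<j}\mathcal{O}(U_i\cap U_j)\to\cdots
\end{align}
is strictly exact (exact with closed images) in the category of topological $S$-modules, and moreover that the higher $\mathcal{O}(\cdot)$ terms are acyclic with respect to $\mathcal{O}^+$. So the goal reduces to establishing strict exactness of such \v{C}ech complexes.

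First I would fix the setup: by hypothesis there is a continuous $S$-algebra morphism $S\to P$ with $P$ perfectoid, admitting a continuous $S$-linear splitting $\sigma\colon P\to S$. Writing $Q:=\ker(\sigma)$, one obtains a topological direct sum decomposition $P\cong S\oplus Q$ of topological $S$-modules. The crucial transport step is that for any rational subset $U\subset\mathrm{Spa}(S,S^+)$ with preimage $V\subset\mathrm{Spa}(P,P^+)$, one has a canonical identification $\mathcal{O}_P(V)\cong \mathcal{O}_S(U)\widehat{\otimes}_S P$, and the splitting $\sigma$ induces, after completed base change, a splitting
\begin{align}
\mathcal{O}_P(V)\;\cong\;\mathcal{O}_S(U)\oplus \bigl(\mathcal{O}_S(U)\widehat{\otimes}_S Q\bigr)
\end{align}
in the category of topological $\mathcal{O}_S(U)$-modules. (One must check that $\mathcal{O}_S(U)$, even without knowing $S$ to be sheafy yet, is well-defined as a topological localization, and that completed tensoring along a split map preserves the splitting; both are formal provided $\sigma$ is continuous $S$-linear and one uses the quotient norms compatibly.)

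Next, since $P$ is perfectoid, Scholze's sheafyness theorem applies and the \v{C}ech complex $\check{C}^\bullet(\{V_i\},\mathcal{O}_P)$ associated to the pulled-back rational cover $\{V_i\}$ of $\mathrm{Spa}(P,P^+)$ is strictly exact. By the splitting described above, this complex decomposes termwise as a direct sum
\begin{align}
\check{C}^\bullet(\{V_i\},\mathcal{O}_P)\;\cong\;\check{C}^\bullet(\{U_i\},\mathcal{O}_S)\;\oplus\;\check{C}^\bullet(\{U_i\},\mathcal{O}_S)\widehat{\otimes}_S Q,
\end{align}
and the direct sum decomposition is compatible with the \v{C}ech differentials (since these are built from the structural restriction maps, which are $S$-linear and commute with the splitting). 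A direct topological summand of a strictly exact complex is strictly exact, so the $\mathcal{O}_S$-\v{C}ech complex is strictly exact as well. The same direct-summand argument handles the analogous complex for $\mathcal{O}_S^+$, yielding the required acyclicity for the integral structure presheaf.

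The main obstacle I anticipate is the careful bookkeeping in the second paragraph: one needs that completed tensor product along $S\to\mathcal{O}_S(U)$ genuinely preserves the topological splitting $P=S\oplus Q$, which amounts to showing that $Q$ is sufficiently well-behaved (for instance, that the two natural topologies on $\mathcal{O}_S(U)\widehat{\otimes}_S Q$ agree, and that it injects into $\mathcal{O}_S(U)\widehat{\otimes}_S P$ as a closed summand). This is exactly where the hypothesis that the splitting takes place in \emph{topological} $S$-modules, rather than merely algebraically, becomes essential; after that is secured, the remainder is a routine descent from perfectoid sheafyness.
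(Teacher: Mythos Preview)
The paper does not give its own proof here; it simply cites \cite[Corollary 7.4]{KH}. Your proposal correctly reconstructs the argument that appears in that reference: transport the topological splitting $P\cong S\oplus Q$ through rational localization via $\mathcal{O}_P(V)\cong\mathcal{O}_S(U)\widehat{\otimes}_S P$, invoke sheafyness of the perfectoid $P$, and read off exactness of the $S$-\v{C}ech complex as a topological direct summand. The technical point you isolate---that the continuous $S$-linear splitting survives completed base change to $\mathcal{O}_S(U)$---is exactly the crux in \cite{KH}, and your instinct that the \emph{topological} (not merely algebraic) nature of the splitting is what makes this go through is correct.
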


\begin{proof}
See \cite[Corollary 7.4]{KH}.	
\end{proof}

\begin{example}
Sousperfectoid rings are not rare, for instance one can just take the corresponding perfectoid field $\mathbb{Q}_p(p^{p^{-\infty}})^\wedge$ then $\mathbb{Q}_p$ will be then sousperfectoid in this larger perfectoid ring which admits a splitting. Also the ring $\widetilde{\Pi}^I_{R}$ is then sousperfectoid with respect to $\widetilde{\Pi}^I_{R}\widehat{\otimes}_{\mathbb{Q}_p}\mathbb{Q}_p(p^{p^{-\infty}})^\wedge$. More general, smooth affinoid algebras are sousperfectoid.	
\end{example}

\begin{assumption}
From now on, we assume that $A$ is sousperfectoid.	
\end{assumption}

\begin{corollary}
For any closed interval $I$ we have that the ring $\widetilde{\Pi}^{[s,r]}_{R,A}$ is sheafy.	
\end{corollary}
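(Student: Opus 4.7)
The plan is to verify directly that $\widetilde{\Pi}^{[s,r]}_{R,A}$ is itself sousperfectoid, after which the preceding Kedlaya--Hansen proposition delivers sheafiness immediately. Unfolding the definition, $\widetilde{\Pi}^{[s,r]}_{R,A} = \widetilde{\Pi}^{[s,r]}_R \widehat{\otimes}_{\mathbb{Q}_p} A$, and both factors are sousperfectoid: the first by the example just stated, with perfectoid overring $\widetilde{\Pi}^{[s,r]}_R \widehat{\otimes}_{\mathbb{Q}_p} \mathbb{Q}_p(p^{p^{-\infty}})^{\wedge}$, and the second by the running assumption on $A$, with some perfectoid Huber ring $P$ and a continuous $\mathbb{Q}_p$-linear retraction $\rho \colon P \to A$ of the structural map.

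Next, I would combine the two splittings by applying the completed tensor product over $\mathbb{Q}_p$. Tensoring the inclusion of $A$ into $P$ with the identity on $\widetilde{\Pi}^{[s,r]}_R$, and then composing with the splitting witnessing that $\widetilde{\Pi}^{[s,r]}_R$ is sousperfectoid, produces a continuous morphism
\begin{equation*}
\widetilde{\Pi}^{[s,r]}_{R,A} \longrightarrow \widetilde{\Pi}^{[s,r]}_R \widehat{\otimes}_{\mathbb{Q}_p} \mathbb{Q}_p(p^{p^{-\infty}})^{\wedge} \widehat{\otimes}_{\mathbb{Q}_p} P.
\end{equation*}
A continuous retraction is assembled by first applying the retraction of the perfectoid overring of $\widetilde{\Pi}^{[s,r]}_R$ (tensored with the identity on $P$) and then applying $\mathrm{id} \widehat{\otimes} \rho$; these compose to an honest retraction in the category of topological $\widetilde{\Pi}^{[s,r]}_{R,A}$-modules. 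The target is a completed tensor product over $\mathbb{Q}_p$ of two perfectoid Huber rings, hence is itself perfectoid by the standard stability of perfectoidness under completed tensor products over a Tate base. This exhibits $\widetilde{\Pi}^{[s,r]}_{R,A}$ as sousperfectoid, and the preceding proposition concludes the argument.

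The main obstacle is the bookkeeping in the last step: one must confirm both that completed tensor product genuinely preserves the section--retraction pair at the topological level (rather than only algebraically), and that tensoring two perfectoid Huber rings over $\mathbb{Q}_p$ lands inside the class of perfectoid Huber rings in the precise sense required by the Kedlaya--Hansen criterion. The first point follows because $\mathrm{id} \widehat{\otimes} \rho$ is a continuous map of Banach $\mathbb{Q}_p$-modules with a left inverse, so it retracts a section after completion. The second point is the standard fact that perfectoidness is preserved by completed tensor products over a perfectoid or Tate base, which applies here because everything is completed over $\mathbb{Q}_p$ and the relevant tilts behave compatibly. Once these two technical points are justified, the conclusion that $\widetilde{\Pi}^{[s,r]}_{R,A}$ is sheafy follows formally.
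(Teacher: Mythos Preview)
Your proposal is correct and matches the paper's intended argument, which is left entirely implicit: the corollary is stated with no proof, relying on the immediately preceding assumption that $A$ is sousperfectoid, the example that $\widetilde{\Pi}^{[s,r]}_R$ is sousperfectoid, and the Kedlaya--Hansen proposition. You have simply made explicit the step that the completed tensor product of the two splittings yields a split map into a perfectoid ring, which is exactly the content the paper leaves to the reader.
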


\begin{remark} Base on the observation above, we actually make the following conjecture:
\begin{conjecture}
The ring $\widetilde{\Pi}^{[s,r]}_{R,B}$ is stably uniform for any reduced affinoid algebra $B$, where the corresponding observation is as in the following. 
\end{conjecture}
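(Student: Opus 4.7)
The strategy is to prove a stronger conclusion, namely that $\widetilde{\Pi}^{[s,r]}_{R,B}$ is sousperfectoid whenever $B$ is a reduced affinoid algebra over $\mathbb{Q}_p$, from which stable uniformity (and sheafyness) follows via \cite[Corollary 7.4]{KH} together with the observation that rational localizations of sousperfectoid rings remain sousperfectoid.

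The first step is to handle the Tate case $B_{0}=\mathbb{Q}_{p}\langle T_{1},\dots,T_{d}\rangle$. Using the explicit Gauss-norm description from the example in Section~2, one realizes $\widetilde{\Pi}^{[s,r]}_{R,B_{0}}$ as a closed subring of the perfectoid ring
\[
\mathcal{P}_{B_{0}}\;:=\;\widetilde{\Pi}^{[s,r]}_{R}\,\widehat{\otimes}_{\mathbb{Q}_{p}}\,\mathbb{Q}_{p}(p^{1/p^{\infty}})^{\wedge}\bigl\langle T_{1}^{1/p^{\infty}},\dots,T_{d}^{1/p^{\infty}}\bigr\rangle^{\wedge},
\]
equipped with a canonical continuous $\widetilde{\Pi}^{[s,r]}_{R,B_{0}}$-module splitting inherited from the two splittings $\mathbb{Q}_{p}\hookrightarrow \mathbb{Q}_{p}(p^{1/p^{\infty}})^{\wedge}$ and $\mathbb{Q}_{p}\langle T_{i}\rangle\hookrightarrow \mathbb{Q}_{p}\langle T_{i}^{1/p^{\infty}}\rangle^{\wedge}$ in their respective module categories. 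This yields sousperfectoidness of $\widetilde{\Pi}^{[s,r]}_{R,B_{0}}$ directly.

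For a general reduced affinoid $B$, I would choose a Noether normalization $B_{0}\hookrightarrow B$ which is finite and injective; reducedness of $B$ guarantees that this finite extension is generically \'etale. Completed tensoring with $\widetilde{\Pi}^{[s,r]}_{R}$ preserves finiteness, and the base change $\mathcal{P}_{B}:=\mathcal{P}_{B_{0}}\,\widehat{\otimes}_{\widetilde{\Pi}^{[s,r]}_{R,B_{0}}}\widetilde{\Pi}^{[s,r]}_{R,B}$ is a finite, generically \'etale extension of the perfectoid ring $\mathcal{P}_{B_{0}}$. Applying almost purity in the style of Scholze-Kedlaya-Liu, the uniform completion of $\mathcal{P}_{B}$ is perfectoid, and the corresponding map $\widetilde{\Pi}^{[s,r]}_{R,B}\to\mathcal{P}_{B}$ admits a continuous splitting, witnessing sousperfectoidness. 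Rational localizations $\widetilde{\Pi}^{[s,r]}_{R,B}\langle f/g\rangle$ are compatible with this construction, because they commute with the completed tensor products involved and can be absorbed either into the $B$-variable or into the Robba interval variable, so the sousperfectoid property is preserved and stable uniformity follows.

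The main obstacle is the transport of sousperfectoidness along the finite, generically \'etale extension $B_{0}\hookrightarrow B$, since this property is generally fragile under quotients. The essential input is almost purity for perfectoid rings (finite \'etale extensions of perfectoids are perfectoid after uniform completion), which provides the controlled interface between the sousperfectoid structure on $\widetilde{\Pi}^{[s,r]}_{R,B_{0}}$ and its finite extension. If almost purity proves too heavy a tool here, an alternative approach would be to bypass sousperfectoidness entirely and verify stable uniformity directly, by analyzing the power-bounded subring of every rational localization using the tensor-product Banach structure and the power-multiplicativity of the spectral norm on $B$; however this approach requires substantially more explicit computation and loses the conceptual clarity of the sousperfectoid route.
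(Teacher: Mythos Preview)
The statement you are attempting to prove is explicitly a \emph{conjecture} in the paper; the paper does not claim a proof. What the paper does establish is precisely your first step: in the Tate case $B_{0}=\mathbb{Q}_{p}\langle T_{1},\dots,T_{d}\rangle$, the ring $\widetilde{\Pi}^{[s,r]}_{R,B_{0}}$ is sousperfectoid (hence stably uniform) via the splitting coming from $\mathbb{Q}_{p}\hookrightarrow\mathbb{Q}_{p}(p^{1/p^{\infty}})^{\wedge}$. For a general reduced affinoid $B$ the paper only says ``we suspect this is also true'' and then, precisely because this is open, imposes the standing \emph{Assumption} that $A$ be sousperfectoid for the remainder of the section.

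Your proposed extension to general $B$ has a genuine gap at the almost-purity step. Almost purity in the Scholze/Kedlaya--Liu sense applies to \emph{finite \'etale} extensions of perfectoid rings, not to finite extensions that are merely generically \'etale. A Noether normalization $B_{0}\hookrightarrow B$ of a reduced affinoid is typically ramified: already $B=\mathbb{Q}_{p}\langle T,S\rangle/(S^{2}-T)$ over $B_{0}=\mathbb{Q}_{p}\langle T\rangle$ is finite and generically \'etale but ramified at $T=0$, and base-changing to a perfectoid ring does not remove that ramification. Consequently there is no reason for $\mathcal{P}_{B}$ (or its uniform completion) to be perfectoid, and even less reason for the map $\widetilde{\Pi}^{[s,r]}_{R,B}\to\mathcal{P}_{B}$ to admit a continuous module splitting. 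You acknowledge that sousperfectoidness is ``fragile under quotients''; that fragility is exactly what breaks the argument here, and it is why the paper stops at the Tate case and records the general statement as a conjecture rather than a result. Your alternative suggestion of a direct stable-uniformity computation is not developed enough to assess, but note that the paper already flags the obstruction: rational localizations of $\widetilde{\Pi}^{[s,r]}_{R,B}$ are not simply products of rational localizations of the two factors, so the tensor-product Banach structure does not straightforwardly control them.
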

\noindent Note that a Tate affinoid algebra over a uniform Banach ring is uniform, so we have that the ring $\widetilde{\Pi}^{[s,r]}_{R,T_d}$ is uniform, but it is not the case that the rational localization of $\widetilde{\Pi}^{[s,r]}_{R,T_d}$ is the product of rational localizations of $\widetilde{\Pi}^{[s,r]}_{R}$ and $T_d$, which is to say that one does not have the chance to prove this directly along this. However since the ring $\widetilde{\Pi}^{[s,r]}_{R,T_d}$ is sousperfectoid,  
so we have that the corresponding ring $\widetilde{\Pi}^{[s,r]}_{R,T_d}$ in the situation of Tate algebra is stably uniform. Now for general $A$ we suspect this is also true.
\end{remark}

\begin{setting}
In the same fashion, in our situation we consider the corresponding space 
\begin{displaymath}
O_{R,A}:=\bigcup_{0<s<r}\mathrm{Spa}(\widetilde{\Pi}^{[s,r]}_{R,A},\widetilde{\Pi}^{[s,r],+}_{R,A})	
\end{displaymath}	
which naturally gives rise to the corresponding quotient $\mathrm{FF}_{R,A}$ as mentioned above. This will allow us to consider the corresponding comparison in our generalized context as in the following.
\end{setting}

\begin{setting}
In our current situation, we have our adic version of the corresponding Fargues-Fontaine curves in some rigid analytic deformation context, which includes the situation where the rigid analytic spaces is some Stein space. In our situation the corresponding space will be:
\begin{displaymath}
O_{R,A_\infty(G)}:=\varinjlim_{n\rightarrow \infty}\bigcup_{0<s<r}\mathrm{Spa}(\widetilde{\Pi}^{[s,r]}_{R,A_n(G)},\widetilde{\Pi}^{[s,r],+}_{R,A_n(G)}).	
\end{displaymath}
Then we have the obvious notion of vector bundles over this space, which is Stein in some generalized sense, this is because the corresponding building rings are not noetherian. Then through taking the corresponding quotient we have the corresponding quotient space $\mathrm{FF}_{R,A_\infty(G)}$. We can also define the corresponding vector bundles over the $\infty$-level space while moreover we have the corresponding notions of families of vector bundles $(M_n)_n$ where each $M_n$ is a quasicoherent finite locally free sheaf over $\mathrm{FF}_{R,A_n(G)}$.	
\end{setting}

\indent Then we have the following relative version of \cite[Theorem 8.7.7]{KL15}:

\begin{proposition}
The categories of $A$-relative vector bundles over the two spaces $\mathrm{FF}_{R,A}$ and $\mathrm{Proj}P_{R,A}$ defined above are equivalent. The categories of families of $A$-relative vector bundles over the two spaces $\mathrm{FF}_{R,A_\infty(G)}$ and $\mathrm{Proj}P_{R,A_\infty(G)}$ defined above are equivalent. 
\end{proposition}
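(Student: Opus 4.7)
The plan is to follow closely the strategy of \cite[Theorem 8.7.7]{KL15}, but passing through the equivalences we have already set up in the $A$-relative setting, in particular \cref{corollary7.6}, which identifies vector bundles over $\mathrm{Proj}P_{R,A}$ with $\varphi^a$-bundles over $\widetilde{\Pi}_{R,A}$. Concretely, I would first prove the equivalence between $A$-relative vector bundles over $\mathrm{FF}_{R,A}$ and $\varphi^a$-equivariant vector bundles over $O_{R,A}$, which is purely formal from the fact that $\mathrm{FF}_{R,A}$ is by definition the quotient of $O_{R,A}$ by the free action of $\varphi^a$. Under the sousperfectoid assumption on $A$, the ring $\widetilde{\Pi}^{[s,r]}_{R,A}$ is sheafy (Kedlaya--Hansen), so $O_{R,A}$ is an honest adic space and vector bundles on it are given by compatible finitely generated projective modules $M_{[s,r]}$ over $\widetilde{\Pi}^{[s,r]}_{R,A}$ for varying $[s,r]$, with the obvious glueing isomorphisms.

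Next I would identify this last datum with the notion of a $\varphi^a$-bundle over $\widetilde{\Pi}_{R,A}$ in the sense of the definition in Section 2. Indeed, a $\varphi^a$-equivariant vector bundle on $O_{R,A}$ restricts on the annulus $\mathrm{Spa}(\widetilde{\Pi}^{[s,r]}_{R,A},\widetilde{\Pi}^{[s,r],+}_{R,A})$ to a finite locally free $\widetilde{\Pi}^{[s,r]}_{R,A}$-module $M_{[s,r]}$, and the $\varphi^a$-action provides exactly the pullback isomorphisms
\[
\varphi^*M_{[s,r]}\otimes_{\widetilde{\Pi}^{[s/q,r/q]}_{R,A}}\widetilde{\Pi}^{[s,r/q]}_{R,A}\overset{\sim}{\rightarrow} M_{[s,r]}\otimes_{\widetilde{\Pi}^{[s,r]}_{R,A}}\widetilde{\Pi}^{[s,r/q]}_{R,A},
\]
with the restriction/glueing isomorphisms $\psi_{I',I}$ and the cocycle condition for nested triples of intervals coming from the sheaf structure. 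Combining with \cref{corollary7.6} then yields the first assertion.

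The key technical point---and the main obstacle---is the glueing step, namely verifying that the two categories really contain the same data once one passes to vector bundles rather than individual modules. In the absolute setting this is \cite[Theorem 8.7.7]{KL15}, whose proof uses Kiehl--Tate style faithfully flat descent for rational localizations of $\widetilde{\Pi}^{[s,r]}_R$, together with the fact that vector bundles on $\mathrm{Spa}(\widetilde{\Pi}^{[s,r]}_R,\widetilde{\Pi}^{[s,r],+}_R)$ coincide with finite projective modules. Both ingredients survive in the $A$-relative setting precisely because $\widetilde{\Pi}^{[s,r]}_{R,A}$ is sheafy (and conjecturally stably uniform) under the sousperfectoid hypothesis; rational localizations commute with the completed base change $\widehat{\otimes}_{\mathbb{Q}_p}A$ in a way compatible with Frobenius, so the descent and finite-projectiveness transfer intact. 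I would verify this compatibility carefully, exactly paralleling the proof of \cite[Theorem 8.7.7]{KL15}, and not try to argue from a shortcut.

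For the Fréchet-Stein statement, I would take the projective limit of the equivalences established at each finite level $A_n(G)$. A family of vector bundles $(M_n)_n$ over $\mathrm{FF}_{R,A_\infty(G)}$ in our sense is by definition a compatible inverse system of $A_n(G)$-relative vector bundles over $\mathrm{FF}_{R,A_n(G)}$, and similarly for $\mathrm{Proj}P_{R,A_\infty(G)}$. Since the equivalence from the first half of the proposition is functorial and commutes with the base change $A_{n+1}(G)\to A_n(G)$ (it is constructed by sheafification and glueing in a canonical way), it assembles into an equivalence of the two inverse systems, and hence of the two categories of families. No extra finiteness or Noetherian hypothesis is needed at the $A_\infty(G)$-level because everything is phrased in terms of the compatible system, which matches the framework set up earlier in \cref{section3.2}.
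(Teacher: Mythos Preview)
Your proposal is correct and follows essentially the same approach as the paper, which simply refers the reader to \cite[Theorem 8.7.7]{KL15}; you have spelled out in detail how that argument adapts to the $A$-relative setting via \cref{corollary7.6} and the sousperfectoid/sheafiness input, and your treatment of the Fr\'echet--Stein case by passing to the inverse system levelwise matches the paper's framework.
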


\begin{proof}
One could derive this by the same way as in \cite[Theorem 8.7.7]{KL15}.	
\end{proof}

\begin{remark}
One interesting question here is that we can study the corresponding global sections taking the form of $\varprojlim_{n\rightarrow \infty}M_n$ of the families of vector bundles and the corresponding families of Frobenius modules through the equivalences we established so far. 
\end{remark}

\indent First we have the following corollary of the previous proposition:

\begin{corollary}
Categories of the families of Frobenius modules over $\widetilde{\Pi}_{R,A_\infty(G)}$, the families of Frobenius bundles over $\widetilde{\Pi}_{R,A_\infty(G)}$ and the families of vector bundles over the adic Fargues-Fontaine curve $\mathrm{FF}_{R,A_\infty(G)}$ are equivalent.	
\end{corollary}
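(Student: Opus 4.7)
The plan is to deduce this corollary directly by composing the two equivalences that have just been established, namely the one between families of vector bundles over $\mathrm{FF}_{R,A_\infty(G)}$ and families of vector bundles over $\mathrm{Proj}P_{R,A_\infty(G)}$ (the preceding proposition), together with the four-way equivalence of Section~\ref{section3.2} that identifies families of vector bundles over $\mathrm{Proj}P_{R,A_\infty(G)}$ with families of $\varphi^a$-modules over $\widetilde{\Pi}^\infty_{R,A_\infty(G)}$, with families of $\varphi^a$-modules over $\widetilde{\Pi}_{R,A_\infty(G)}$, and with families of $\varphi^a$-bundles over $\widetilde{\Pi}_{R,A_\infty(G)}$. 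Concretely, first I would invoke the proposition proved just above to exhibit an equivalence of categories between families of $A_\infty(G)$-relative vector bundles over $\mathrm{FF}_{R,A_\infty(G)}$ and families over $\mathrm{Proj}P_{R,A_\infty(G)}$.

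Next, I would transport through the four-way equivalence already proved at the level of the Fr\'echet-Stein deformation, which gives the identifications
\begin{align*}
\text{families of vec. bundles over }\mathrm{Proj}P_{R,A_\infty(G)} &\simeq \text{families of }\varphi^a\text{-modules over }\widetilde{\Pi}^\infty_{R,A_\infty(G)}\\
&\simeq \text{families of }\varphi^a\text{-modules over }\widetilde{\Pi}_{R,A_\infty(G)}\\
&\simeq \text{families of }\varphi^a\text{-bundles over }\widetilde{\Pi}_{R,A_\infty(G)}.
\end{align*}
Composing with the first equivalence yields the desired three-way identification of families of Frobenius modules, Frobenius bundles over $\widetilde{\Pi}_{R,A_\infty(G)}$, and families of vector bundles over $\mathrm{FF}_{R,A_\infty(G)}$.

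The only subtle point, and likely the main obstacle, is compatibility of the equivalences at each finite Fr\'echet-Stein level $A_n(G)$ with the transition maps that make up a family. For this I would check that all functors involved (base change, pullback along $\mathrm{FF}_{R,A_n(G)}\to \mathrm{Proj}P_{R,A_n(G)}$, $M\mapsto M_f$, and the bundle-to-module functor) commute with the cocartesian transitions $A_n(G)\otimes_{A_{n+1}(G)}M_{n+1}\overset{\sim}{\to} M_n$ that define the notion of a family. This is a formal verification using the naturality of the equivalences established at each finite level and the fact that the projective limits defining $\widetilde{\Pi}_{R,A_\infty(G)}$, $\widetilde{\Pi}^\infty_{R,A_\infty(G)}$, $\mathrm{Proj}P_{R,A_\infty(G)}$ and $\mathrm{FF}_{R,A_\infty(G)}$ are all formed levelwise in the same way.
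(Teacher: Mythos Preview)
Your proposal is correct and follows exactly the route the paper intends: the corollary is stated as an immediate consequence of the preceding proposition (equivalence between families of vector bundles over $\mathrm{FF}_{R,A_\infty(G)}$ and over $\mathrm{Proj}P_{R,A_\infty(G)}$) composed with the four-way equivalence of \cref{section3.2}. The paper does not spell out the compatibility-with-transition-maps check you mention, but treating it as a formal verification is entirely in line with how the paper handles the Fr\'echet--Stein level throughout.
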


\begin{remark}
\indent The main issue here is that the corresponding families of Frobenius modules and vector bundles might not have finite generated global sections since in the whole process of taking inverse limit the numbers of the generators might be blowing up.
At this moment we do not know whether the global sections of a family of vector bundles over $\mathrm{FF}_{R,A_\infty(G)}$ are finitely generated. In the situation where one can get the chance to control the largest number of generators during the whole process of taking the inverse limit, one might be able to have the chance to control the finiteness of the global sections. However, this is not trivially correct. The Fargues-Fontaine curves involved are defined over some Fr\'echet-Stein algebras which behave well due to the well-established theory of Fr\'echet-Stein algebras in the noetherian setting. However the Robba rings in the relative setting are highly not noetherian. The corresponding cohomologies of these big modules or families of big modules will eventually then be the corresponding modules or families of modules over Fr\'echet-Stein algebras, which will be definitely easier to control and study since the theory of Fr\'echet-Stein algebras should be able to help us to construct the corresponding Iwasawa theoretic or $K$-theoretic objects. 

\end{remark}

\indent Furthermore we consider the following Hodge-Iwasawa construction under the idea from Kedlaya-Pottharst through the perfectoid subdomain covering mentioned above over any preadic space $X$ defined over $\mathbb{Q}_p$.

\begin{definition}
We define the following $A$-relative sheaves of period rings after \cite[Definition 9.3.3]{KL15}:
\begin{align}
\underline{\underline{\Omega}}_{X,A},\underline{\underline{\Omega}}_{X,A}^\mathrm{int},	\underline{\underline{\widetilde{\Pi}}}_{X,A}^{\mathrm{int},r},\underline{\underline{\widetilde{\Pi}}}_{X,A}^\mathrm{int},\underline{\underline{\widetilde{\Pi}}}_{X,A}^\mathrm{bd,r},\underline{\underline{\widetilde{\Pi}}}_{X,A}^\mathrm{bd},
\underline{\underline{\widetilde{\Pi}}}_{X,A}^r,\underline{\underline{\widetilde{\Pi}}}_{X,A}^I,
\underline{\underline{\widetilde{\Pi}}}_{X,A}^\infty,
\underline{\underline{\widetilde{\Pi}}}_{X,A},
\end{align}
by locally taking the suitable completed product and then glueing. Similarly we have the following Iwasawa sheaves as well:
\begin{align}
\underline{\underline{\Omega}}_{X,A\widehat{\otimes}A_\infty(G)},\underline{\underline{\Omega}}_{X,A\widehat{\otimes}A_\infty(G)}^\mathrm{int},	\underline{\underline{\widetilde{\Pi}}}_{X,A\widehat{\otimes}A_\infty(G)}^{\mathrm{int},r},\underline{\underline{\widetilde{\Pi}}}_{X,A\widehat{\otimes}A_\infty(G)}^\mathrm{int},\underline{\underline{\widetilde{\Pi}}}_{X,A\widehat{\otimes}A_\infty(G)}^{\mathrm{bd},r},\underline{\underline{\widetilde{\Pi}}}_{X,A\widehat{\otimes}A_\infty(G)}^\mathrm{bd},
\end{align}
\begin{align}
\underline{\underline{\widetilde{\Pi}}}_{X,A\widehat{\otimes}A_\infty(G)}^r,\underline{\underline{\widetilde{\Pi}}}_{X,A\widehat{\otimes}A_\infty(G)}^I,
\underline{\underline{\widetilde{\Pi}}}_{X,A\widehat{\otimes}A_\infty(G)}^\infty,
\underline{\underline{\widetilde{\Pi}}}_{X,A\widehat{\otimes}A_\infty(G)}.
\end{align}
\end{definition}

Then we have the following $A$-relative version of \cite[Theorem 9.3.12]{KL15}:

\begin{proposition}
The categories of $A$-relative vector bundles over $\mathrm{FF}_{X,A}$ and Frobenius-equivariant $A$-relative Hodge-Iwasawa finite locally free sheaves over the period sheaves $\underline{\underline{\widetilde{\Pi}}}_{X,A}^\infty,
\underline{\underline{\widetilde{\Pi}}}_{X,A}$ are all equivalent.
\end{proposition}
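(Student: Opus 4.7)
The plan is to mimic the proof of \cite[Theorem 9.3.12]{KL15} while threading the $A$-deformation through at every step, using the sousperfectoid hypothesis on $A$ to guarantee that all relevant rings are sheafy so that the local-to-global passage is well-defined. I would first reduce to the case of a single perfectoid subdomain $U = \mathrm{Spa}(R,R^+) \in X_{\text{pro\'et}}$ in a basis of neighborhoods, where $R$ is a perfect uniform adic Banach algebra over $\mathbb{F}_p$. On such a $U$, the period sheaves $\underline{\underline{\widetilde{\Pi}}}_{X,A}^{\infty}$ and $\underline{\underline{\widetilde{\Pi}}}_{X,A}$ evaluate (after sheafification) essentially to the $A$-relative period rings $\widetilde{\Pi}^\infty_{R,A}$ and $\widetilde{\Pi}_{R,A}$ introduced earlier.

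On each such local piece, I would apply the already-proven $A$-relative comparison from \cref{corollary7.6}, which equates the categories of $A$-relative $\varphi^a$-modules over $\widetilde{\Pi}^\infty_{R,A}$ and $\widetilde{\Pi}_{R,A}$ with the category of $A$-relative $\varphi^a$-bundles over $\widetilde{\Pi}_{R,A}$ and with $A$-relative vector bundles on $\mathrm{Proj}\, P_{R,A}$. Combining this with the previous proposition identifying vector bundles on $\mathrm{Proj}\, P_{R,A}$ with vector bundles on the adic curve $\mathrm{FF}_{R,A}$, I obtain the sought equivalence locally. The Frobenius-equivariance condition matches up tautologically on both sides because the relative Frobenius is the one induced from the Witt vector slot, trivial on the $A$-factor.

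To globalize, I would invoke the standard Kedlaya--Liu descent machinery along the pro-\'etale site: vector bundles on the adic relative Fargues--Fontaine curve satisfy pro-\'etale descent along perfectoid subdomain covers, and Frobenius modules over the period sheaves satisfy the analogous descent. The sheafiness of $\widetilde{\Pi}^{[s,r]}_{R,A}$ established via the sousperfectoid assumption is what makes both descent statements meaningful and comparable term-by-term. For a general preadic space $X$ one covers by such perfectoid subdomains, applies the local equivalence on each, and checks that the gluing data on overlaps are intertwined by the local comparison functors; the cocycle conditions transport automatically because the comparison is functorial in $R$.

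The main obstacle, as in the absolute case \cite[Theorem 9.3.12]{KL15}, is the verification that pro-\'etale descent really holds for Frobenius-equivariant finite locally free sheaves over $\underline{\underline{\widetilde{\Pi}}}_{X,A}^\infty$ and $\underline{\underline{\widetilde{\Pi}}}_{X,A}$ in the $A$-relative setting, rather than just for the individual sections over perfectoid opens. This amounts to checking that the completed tensor product with $A$ commutes with the formation of the relevant sheaves on a perfectoid cover, and that no obstruction to effectivity of descent arises; here one exploits that $A$ is fixed and sousperfectoid, so that $-\widehat{\otimes}_{\mathbb{Q}_p} A$ is exact enough on the Banach-algebra pieces appearing in the presentation of the period sheaves, allowing the descent results of \cite{KL15} to be transferred essentially unchanged. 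Once this is in place, the functors in both directions---taking global sections on one side and pulling back along $\mathrm{FF}_{X,A} \to \mathrm{Proj}\, P_{X,A}$ followed by the comparison to $\varphi^a$-modules on the other---are quasi-inverse by the local computation.
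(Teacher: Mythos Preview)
Your proposal is correct and follows essentially the same approach as the paper: reduce to perfectoid subdomains in the pro-\'etale site, apply the already-established local equivalence (\cref{corollary7.6} together with the adic-vs-schematic comparison for $\mathrm{FF}_{R,A}$), and glue. The paper's own proof is a one-line pointer to exactly this strategy, so your version simply unpacks the descent step in more detail than the paper bothers to record.
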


\begin{proof}
This is by considering the perfectoid covering and the local equivalence we established before.	
\end{proof}

\subsection{Higher Homotopical Geometrized Tamagawa-Iwasawa Theory of Hodge-Iwasawa Modules}
\indent The structure of the corresponding Hodge-Iwasawa modules in our mind has the potential to give us the chance to use them to study the corresponding rational Iwasawa theory. The corresponding relationship we established above could show us the corresponding Iwasawa theory of the relative Frobenius modules over the pro-\'etale site is equivalent to that of the corresponding $B$-pairs (we will also discuss this more in our further study). For instance the latter gives us directly some kind of family version of exponential maps and dual-exponential maps in the higher dimensional situation. We now give some further discussion around our main goals in mind. We first start from the following context:

\begin{setting}
Over a separated rigid analytic space over $\mathbb{Q}_p$ (we assume it to be of finite type as an adic space) $X$ we have the category of the pseudocoherent $\varphi$-sheaves over the sheaf of ring $\underline{\underline{\widetilde{\Pi}}}_{X}$ over the pro-\'etale site $X_\text{pro-\'etale}$. We then use the notation $ChQCoh\Pi\Phi_{X_\text{pro-\'etale}}$ to denote the category of all the chain complexes of objects in the category $QCoh\Pi\Phi_{X_\text{pro-\'etale}}$ which we define it to be the category of all the quasicoherent $\varphi$-sheaves (as in the pseudo-coherent situations) over the ring $\underline{\underline{\widetilde{\Pi}}}_{X}$. 
\end{setting}

\begin{remark}
One should actually regard the category $QCoh\Pi\Phi_{X_\text{pro-\'etale}}$ as the category of all the direct limits of the objects in the category of all the pseudocoherent $\varphi$-sheaves over the sheaf of ring $\underline{\underline{\widetilde{\Pi}}}_{X}$ over the pro-\'etale site $X_\text{pro-\'etale}$. During taking the direct limit we assume that we are actually always carrying the corresponding Frobenius action.  
\end{remark}

\begin{definition}
A complex of objects of $ChQCoh\Pi\Phi_{X_\text{pro-\'etale}}$  taking the form of
\[
\xymatrix@R+0pc@C+0pc{
\ar[r]\ar[r]\ar[r] &... \ar[r]\ar[r]\ar[r] &M^{n-1} \ar[r]\ar[r]\ar[r] &M^n
\ar[r]\ar[r]\ar[r] &M^{n+1}
\ar[r]\ar[r]\ar[r] &...
}
\]	
is called pseudocoherent if it is quasi-isomorphic to a bounded above complex of finite projective objects in $\Pi\Phi_{X_\text{pro-\'etale}}$ (the category of all the pseudo-coherent $\varphi$-sheaves over $\underline{\underline{\widetilde{\Pi}}}_{X}$)
\[
\xymatrix@R+0pc@C+0pc{
\ar[r]\ar[r]\ar[r] &... \ar[r]\ar[r]\ar[r] &F^{m-2} \ar[r]\ar[r]\ar[r] &F^{m-1}
\ar[r]\ar[r]\ar[r] &F^m
\ar[r]\ar[r]\ar[r] &0.
}
\] 
A complex of objects of $ChQCoh\Pi\Phi_{X_\text{pro-\'etale}}$  taking the form of
\[
\xymatrix@R+0pc@C+0pc{
\ar[r]\ar[r]\ar[r] &... \ar[r]\ar[r]\ar[r] &M^{n-1} \ar[r]\ar[r]\ar[r] &M^n
\ar[r]\ar[r]\ar[r] &M^{n+1}
\ar[r]\ar[r]\ar[r] &...
}
\]	
is called perfect if it is quasi-isomorphic to a bounded complex of finite projective objects in $\Pi\Phi_{X_\text{pro-\'etale}}$ (the category of all the pseudo-coherent $\varphi$-sheaves over $\underline{\underline{\widetilde{\Pi}}}_{X}$ as those in \cite[Chapter 8]{KL16})
\[
\xymatrix@R+0pc@C+0pc{
\ar[r]\ar[r]\ar[r] &... \ar[r]\ar[r]\ar[r] &F^{m-2} \ar[r]\ar[r]\ar[r] &F^{m-1}
\ar[r]\ar[r]\ar[r] &F^m
\ar[r]\ar[r]\ar[r] &0.
}
\] 
We then use the notation $D_{\mathrm{pseudo}}\Pi\Phi_{X_\text{pro-\'etale}}$ to denote the category (not the derived one) of all the pseudocoherent complexes of objects in $QCoh\Pi\Phi_{X_\text{pro-\'etale}}$ in the above sense. And we use the notation $D_{\mathrm{perf}}\Pi\Phi_{X_\text{pro-\'etale}}$ for the perfect complexes. We use the notation $D^{\mathrm{cb}}_{\mathrm{pseudo}}\Pi\Phi_{X_\text{pro-\'etale}}$ to denote the category of cohomology bounded pseudocoherent complexes. And we use the notations $D^{\text{dg-flat}}_{\mathrm{perf}}\Pi\Phi_{X_\text{pro-\'etale}}$ to denote the the subcategories consisting of dg-flat complexes. And we use the notations $D^{\text{str}}_{\mathrm{perf}}\Pi\Phi_{X_\text{pro-\'etale}}$ to denote the subcategories consisting of strictly perfect complexes.
\end{definition}

\begin{proposition}
There is a Waldhausen structure over each of
\begin{align}
&D_{\mathrm{pseudo}}\Pi\Phi_{X_\text{pro-\'etale}},D^{\mathrm{cb}}_{\mathrm{pseudo}}\Pi\Phi_{X_\text{pro-\'etale}},\\
&D_{\mathrm{perf}}\Pi\Phi_{X_\text{pro-\'etale}},D^{\mathrm{dg-flat}}_{\mathrm{perf}}\Pi\Phi_{X_\text{pro-\'etale}},	D^{\mathrm{str}}_{\mathrm{perf}}\Pi\Phi_{X_\text{pro-\'etale}}.
\end{align} 	
\end{proposition}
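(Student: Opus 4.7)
The plan is to equip each of the listed categories with a Waldhausen structure in the standard way that Witte used in \cite{Wit3} for complexes of constructible flat \'etale sheaves: declare a morphism to be a cofibration if it is a degreewise admissible monomorphism (i.e.\ a degreewise split injection whose term-wise cokernels lie in the underlying category $\Pi\Phi_{X_\text{pro-\'etale}}$ with the inherited Frobenius structure), and declare a morphism to be a weak equivalence if it is a quasi-isomorphism of the underlying complexes of $\underline{\underline{\widetilde{\Pi}}}_X$-modules. The zero complex serves as the zero object. With these choices the axioms (isomorphisms being both cofibrations and weak equivalences, the pushout axiom, the gluing axiom for weak equivalences) reduce to standard diagram chases once one checks that the relevant subcategory of complexes is closed under the operations in play.

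First I would verify the global category structure of $\mathrm{Ch}\,QCoh\Pi\Phi_{X_\text{pro-\'etale}}$: this is an exact additive category with kernels and cokernels in the quasi-coherent setting, the Frobenius action descends to pushouts and pullbacks since it is functorial in the module structure, and degreewise-split short exact sequences are preserved under pushouts along arbitrary maps. Then I would pass to the five subcategories one by one. For $D_{\mathrm{pseudo}}\Pi\Phi_{X_\text{pro-\'etale}}$ and its cohomologically bounded variant $D^{\mathrm{cb}}_{\mathrm{pseudo}}$, the main point is that pseudocoherence in the sense of the previous proposition (the pseudo-coherent analogue of \cite[Theorem 4.6.12]{KL16} established earlier in the excerpt) is stable under extensions and under the two-out-of-three property along quasi-isomorphisms; together with the fact that a pushout of a pseudocoherent complex along a degreewise-split cofibration remains pseudocoherent (because the pushout sits in a degreewise-split short exact sequence with a pseudocoherent kernel and cokernel), this gives the Waldhausen axioms. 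The perfect and dg-flat versions follow the same pattern, using that bounded complexes of finite projective objects in $\Pi\Phi_{X_\text{pro-\'etale}}$ are closed under degreewise-split extensions and that dg-flatness is preserved since flatness is stable under extensions, pushouts, and arbitrary direct sums on the level of terms.

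For $D^{\mathrm{str}}_{\mathrm{perf}}\Pi\Phi_{X_\text{pro-\'etale}}$ the argument is actually the cleanest: a strictly perfect complex is degreewise finite projective and bounded, and the category of finite projective $\varphi$-sheaves over $\underline{\underline{\widetilde{\Pi}}}_X$ is additive with cokernels of split injections being again finite projective, so the cofibrations given by degreewise-split injections stay inside the subcategory, and pushouts exist and have finite projective terms. The gluing axiom for weak equivalences is verified by the usual two-out-of-three argument applied termwise to the cones.

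The main obstacle I expect is stability of the cofibration class under pushouts inside the dg-flat and strict subcategories, since a pushout of a cofibration by an arbitrary map has to remain dg-flat (respectively strictly perfect) and to preserve the Frobenius-equivariant structure of the sheaves. For dg-flatness this is where one uses that the degreewise cokernel of a cofibration is dg-flat by hypothesis and that flatness is preserved under arbitrary pushouts and extensions; for strict perfectness one similarly uses that finite projective objects in $\Pi\Phi_{X_\text{pro-\'etale}}$ form an additive subcategory closed under split extensions. A secondary subtlety is that the Frobenius pullback commutes with the pushouts we use because the Frobenius on $\underline{\underline{\widetilde{\Pi}}}_X$ is flat, so the prescribed isomorphisms $\varphi^* M \overset{\sim}{\to} M$ on cofibers glue correctly, making all constructions genuinely happen inside the $\varphi$-equivariant category; once this compatibility is in place the verification of the Waldhausen axioms proceeds as in \cite[Chapter 5]{Wit3}.
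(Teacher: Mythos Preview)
Your proposal is correct and follows essentially the same route as the paper. The paper invokes the black-box criterion of \cite[Proposition 3.1.1]{Wit3} and Thomason--Trobaugh \cite{TT1} for complicial biWaldhausen categories (reduce to checking that the full additive subcategory of complexes inside the ambient abelian category is closed under shifts and extensions, then cite \cite[Tag 064R]{SP} for the two-out-of-three property of pseudocoherent complexes), whereas you unpack the Waldhausen axioms by hand; but the underlying inputs---cofibrations as degreewise monomorphisms with cokernel in the subcategory, weak equivalences as quasi-isomorphisms, and stability of pseudocoherence/perfectness under extensions---are the same in both arguments.
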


\begin{proof}
We only show this for the first category. We are talking about the Waldhausen category in the sense considered by \cite[Proposition 3.1.1]{Wit3} and \cite{TT1} (namely the corresponding complicial biWaldhausen categories). Recalling from the work \cite[Proposition 3.1.1]{Wit3}, the corresponding criterion is to check that the corresponding smaller full additive subcategory of complexes extracted from an abelian category has the stability under the corresponding shifts and the extension through the exact sequences. But we are talking about pseudocoherent complexes, two out of three property follows from \cite[Tag, 064R]{SP} where one can show this directly by applying \cite[Derived Category, Lemma 16.11]{SP}. Of course note that we are not talking about derived category. In this situation we define the corresponding cofibration to be the corresponding degreewise monomorphism with kernel in the original category in which we are considering, while we define the corresponding weak-equivalences to be quasi-isomorphisms. Then one can check the corresponding conditions in the criterion of Waldhausen category in the sense of \cite[Proposition 3.1.1]{Wit3} and \cite{TT1} satisfy.
\end{proof}



\indent We now consider the corresponding objects over the schematic Fargues-Fontaine curves. By our previous comparison theorems (and the corresponding theorems in \cite{KL16}) on the deformations of the objects we can only consider the scheme theory to extract the corresponding information on the Tamagawa-Iwasawa deformations for the relative $p$-adic Hodge theory over the Robba rings which to some extent very complicated. We can now make the following discussion:

\begin{setting}
Consider any uniform perfect adic Banach algebra $R$ as in our previous consideration. We have the schematic version of the Fargues-Fontaine curve which is denoted by $\mathrm{Proj}P_{R}$. We now use the notation $Mod\mathcal{O}_{\mathrm{Proj}P_{R}}$ to denote the corresponding category of all the sheaves of $\mathcal{O}_{\mathrm{Proj}P_{R}}$-modules over the scheme $\mathrm{Proj}P_{R}$. Then we will use the notation $ChMod\mathcal{O}_{\mathrm{Proj}P_{R}}$ to denote the category of all the complexes of sheaves of $\mathcal{O}_{\mathrm{Proj}P_{R}}$-modules over the scheme $\mathrm{Proj}P_{R}$, with the corresponding derived category $DMod\mathcal{O}_{\mathrm{Proj}P_{R}}$.
\end{setting}

\begin{definition}
A complex of objects of $Mod\mathcal{O}_{\mathrm{Proj}P_{R}}$ taking the form of
\[
\xymatrix@R+0pc@C+0pc{
\ar[r]\ar[r]\ar[r] &... \ar[r]\ar[r]\ar[r] &M^{n-1} \ar[r]\ar[r]\ar[r] &M^n
\ar[r]\ar[r]\ar[r] &M^{n+1}
\ar[r]\ar[r]\ar[r] &...
}
\]	
is called pseudocoherent if it is quasi-isomorphic to a bounded above complex of finite projective objects in the category of all the sheaves of $\mathcal{O}_{\mathrm{Proj}P_{R}}$-modules over the schematic Fargues-Fontaine curve
\[
\xymatrix@R+0pc@C+0pc{
\ar[r]\ar[r]\ar[r] &... \ar[r]\ar[r]\ar[r] &F^{m-2} \ar[r]\ar[r]\ar[r] &F^{m-1}
\ar[r]\ar[r]\ar[r] &F^m
\ar[r]\ar[r]\ar[r] &0.
}
\]
We then use the notation $D_{\mathrm{pseudo}}\mathrm{Proj}P_{R}$ to denote the category (not the derived one) of all the pseudocoherent complexes of objects in $Mod\mathcal{O}_{\mathrm{Proj}P_{R}}$ in the above sense. A complex of objects of $Mod\mathcal{O}_{\mathrm{Proj}P_{R}}$ taking the form of
\[
\xymatrix@R+0pc@C+0pc{
\ar[r]\ar[r]\ar[r] &... \ar[r]\ar[r]\ar[r] &M^{n-1} \ar[r]\ar[r]\ar[r] &M^n
\ar[r]\ar[r]\ar[r] &M^{n+1}
\ar[r]\ar[r]\ar[r] &...
}
\]	
is called perfect if it is quasi-isomorphic to a bounded complex of finite projective objects in the category of all the sheaves of $\mathcal{O}_{\mathrm{Proj}P_{R}}$-modules over the schematic Fargues-Fontaine curve
\[
\xymatrix@R+0pc@C+0pc{
\ar[r]\ar[r]\ar[r] &... \ar[r]\ar[r]\ar[r] &F^{m-2} \ar[r]\ar[r]\ar[r] &F^{m-1}
\ar[r]\ar[r]\ar[r] &F^m
\ar[r]\ar[r]\ar[r] &0.
}
\]
We then use the notation $D_{\mathrm{perf}}\mathrm{Proj}P_{R}$ to denote the category (not the derived one) of all the perfect complexes of objects in $Mod\mathcal{O}_{\mathrm{Proj}P_{R}}$ in the above sense.	
\end{definition}

\begin{proposition}
The category $D_{\mathrm{pseudo}}\mathrm{Proj}P_{R}$ is a category admitting Waldhausen structure.	
\end{proposition}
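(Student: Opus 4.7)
The plan is to mimic exactly the argument used for the analogous proposition about $D_{\mathrm{pseudo}}\Pi\Phi_{X_\text{pro-\'et}}$, now transplanted to the setting of $\mathcal{O}_{\mathrm{Proj}P_{R}}$-modules. The ambient complicial category is $ChMod\mathcal{O}_{\mathrm{Proj}P_{R}}$, which is the category of unbounded chain complexes in an abelian category of sheaves of modules, hence automatically a complicial category in the sense of Thomason-Trobaugh \cite{TT1}. I therefore want to verify that $D_{\mathrm{pseudo}}\mathrm{Proj}P_{R}$ is a full additive subcategory stable under the standard shift functor and under extensions (in the sense of degreewise short exact sequences), since by the criterion of \cite[Proposition 3.1.1]{Wit3} this suffices to endow it with the structure of a complicial biWaldhausen category once we declare the cofibrations and weak equivalences.

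The key step is the extension stability, i.e.\ the two-out-of-three property for pseudocoherence inside the ambient complicial category. First I would recall that a complex $M^\bullet$ in $Mod\mathcal{O}_{\mathrm{Proj}P_R}$ is pseudocoherent precisely when it is quasi-isomorphic to a bounded-above complex of finite locally free (in particular finite projective in the relevant Zariski-local sense) sheaves. Given a degreewise short exact sequence
\begin{equation*}
0 \longrightarrow M_\alpha^\bullet \longrightarrow M^\bullet \longrightarrow M_\beta^\bullet \longrightarrow 0,
\end{equation*}
the associated distinguished triangle in the derived category $DMod\mathcal{O}_{\mathrm{Proj}P_{R}}$ together with \cite[Tag 064R]{SP} (whose proof rests on \cite[Derived Category, Lemma 16.11]{SP}) shows that if any two terms are pseudocoherent then so is the third; stability under shifts is obvious. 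Full additivity is immediate since the direct sum of two bounded-above resolutions by finite projectives is again such a resolution.

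With extension stability in hand, I would then specify the Waldhausen data exactly as in the earlier analogous proposition: declare a morphism to be a \emph{cofibration} if it is a degreewise monomorphism whose cokernel lies again in $D_{\mathrm{pseudo}}\mathrm{Proj}P_{R}$, and declare a morphism to be a \emph{weak equivalence} if it is a quasi-isomorphism of complexes of $\mathcal{O}_{\mathrm{Proj}P_{R}}$-modules. The verification that these classes satisfy the Waldhausen axioms (closure under composition, pushout along cofibrations, the glueing lemma, the two-out-of-three for weak equivalences) is now a direct application of the criterion of \cite[Proposition 3.1.1]{Wit3}, since all of these axioms hold in the ambient complicial category $ChMod\mathcal{O}_{\mathrm{Proj}P_{R}}$ and the subcategory $D_{\mathrm{pseudo}}\mathrm{Proj}P_{R}$ has been shown to be stable under the operations that intervene.

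The expected main obstacle is not any single axiom but rather ensuring that the scheme $\mathrm{Proj}P_{R}$ is well-enough behaved that "finite projective" has the expected local meaning and that pseudocoherent complexes are genuinely closed under the two-out-of-three property with the resolutions existing globally rather than only locally. This is where one needs to appeal to the fact that $\mathrm{Proj}P_{R}$ is covered by the affine opens $\mathrm{Spec}(P[1/f]_0)$ corresponding to global sections of degree $d>0$, so that pseudocoherence can be checked after restriction to each chart and glued back, which is precisely the content of the earlier comparison theorems of this section together with \cite[Chapter 8]{KL16}. Once that is in place the rest of the verification is formal.
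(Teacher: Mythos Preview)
Your proposal is correct and follows essentially the same approach as the paper: invoke the criterion of \cite[Proposition 3.1.1]{Wit3} and \cite{TT1}, verify stability under shifts and extensions via \cite[Tag 064R]{SP}, and declare cofibrations to be degreewise monomorphisms with cokernel in the subcategory and weak equivalences to be quasi-isomorphisms. Your exposition is in fact more detailed than the paper's (which simply refers back to the analogous earlier proof), and your use of ``cokernel'' rather than the paper's ``kernel'' is the correct formulation for a monomorphism; the additional paragraph about global versus local pseudocoherence on $\mathrm{Proj}P_R$ is not needed for the formal argument but does no harm.
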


\begin{proof}
This is standard such as in \cite[Section 1-3]{TT1}. We present the proof as above for the convenience of the readers. We are now talking about again the context of \cite[Proposition 3.1.1]{Wit3} and \cite{TT1}. The corresponding criterion in \cite{Wit3} requires the corresponding stability of the corresponding shifts and extension in the category of the corresponding complexes in our context. Again we can consider the corresponding results of \cite[Tag, 064R]{SP} to prove this. Then the corresponding cofibration will be the corresponding monomorphism with kernel in the original category of the complexes while the corresponding weak-equivalences are taken to be the corresponding quasi-isomorphisms. 	
\end{proof}

Then we can define the corresponding category of pseudocoherent sheaves over the adic Fargues-Fontaine curves:

\begin{setting}
After \cite{KL16}, we consider the cateogory $Mod\mathcal{O}_{\mathrm{FF}_{\widetilde{R}_\psi}}$ of all sheaves of $\mathcal{O}_{\mathrm{FF}_{\widetilde{R}_\psi}}$-modules. Here the ring $\widetilde{R}_\psi$ is the perfect adic uniform Banach ring attached to the toric tower considered in \cite{KL16}. We then use the notation $ChMod\mathcal{O}_{\mathrm{FF}_{\widetilde{R}_\psi}}$ to denote the category of all the chain complexes of objects in the previous category $Mod\mathcal{O}_{\mathrm{FF}_{\widetilde{R}_\psi}}$. 
\end{setting}

\begin{definition}
A complex of objects of $ChMod\mathcal{O}_{\mathrm{FF}_{\widetilde{R}_\psi}}$ taking the form of
\[
\xymatrix@R+0pc@C+0pc{
\ar[r]\ar[r]\ar[r] &... \ar[r]\ar[r]\ar[r] &M^{n-1} \ar[r]\ar[r]\ar[r] &M^n
\ar[r]\ar[r]\ar[r] &M^{n+1}
\ar[r]\ar[r]\ar[r] &...
}
\]	
is called pseudocoherent if it is quasi-isomorphic to a bounded above complex of finite projective objects in the category of all the pseudocoherent sheaves over the Fargues-Fontaine curve
\[
\xymatrix@R+0pc@C+0pc{
\ar[r]\ar[r]\ar[r] &... \ar[r]\ar[r]\ar[r] &F^{m-2} \ar[r]\ar[r]\ar[r] &F^{m-1}
\ar[r]\ar[r]\ar[r] &F^m
\ar[r]\ar[r]\ar[r] &0.
}
\] 
We then use the notation $D_{\mathrm{pseudo,alg}}\mathrm{FF}_{\widetilde{R}_\psi}$ to denote the category (not the derived one) of all the pseudocoherent complexes of objects in $Mod\mathcal{O}_{\mathrm{FF}_{\widetilde{R}_\psi}}$ in the above sense. We also have the category $D^\mathrm{cb}_{\mathrm{pseudo,alg}}\mathrm{FF}_{\widetilde{R}_\psi}$ consisting of all the cohomology bounded pseudocoherent complexes.\\
A complex of objects of $ChMod\mathcal{O}_{\mathrm{FF}_{\widetilde{R}_\psi}}$ taking the form of
\[
\xymatrix@R+0pc@C+0pc{
\ar[r]\ar[r]\ar[r] &... \ar[r]\ar[r]\ar[r] &M^{n-1} \ar[r]\ar[r]\ar[r] &M^n
\ar[r]\ar[r]\ar[r] &M^{n+1}
\ar[r]\ar[r]\ar[r] &...
}
\]	
is called perfect if it is quasi-isomorphic to a bounded complex of finite projective objects in the category of all the pseudocoherent sheaves over the Fargues-Fontaine curve
\[
\xymatrix@R+0pc@C+0pc{
\ar[r]\ar[r]\ar[r] &... \ar[r]\ar[r]\ar[r] &F^{m-2} \ar[r]\ar[r]\ar[r] &F^{m-1}
\ar[r]\ar[r]\ar[r] &F^m
\ar[r]\ar[r]\ar[r] &0.
}
\] 
We then use the notation $D_{\mathrm{perf,alg}}\mathrm{FF}_{\widetilde{R}_\psi}$ to denote the category (not the derived one) of all the perfect complexes of objects in $Mod\mathcal{O}_{\mathrm{FF}_{\widetilde{R}_\psi}}$ in the above sense. We also have the subcategories $D^{\mathrm{dg-flat}}_{\mathrm{perf,alg}}\mathrm{FF}_{\widetilde{R}_\psi}$ and $D^{\mathrm{str}}_{\mathrm{perf,alg}}\mathrm{FF}_{\widetilde{R}_\psi}$ of dg-flat objects and strictly perfect objects.
\end{definition}

\begin{proposition}
The categories
\begin{align}
&D_{\mathrm{pseudo,alg}}\mathrm{FF}_{\widetilde{R}_\psi},D^\mathrm{cb}_{\mathrm{pseudo,alg}}\mathrm{FF}_{\widetilde{R}_\psi}\\
&D_{\mathrm{perf,alg}}\mathrm{FF}_{\widetilde{R}_\psi},D^\mathrm{dg-flat}_{\mathrm{perf,alg}}\mathrm{FF}_{\widetilde{R}_\psi},D^\mathrm{str}_{\mathrm{perf,alg}}\mathrm{FF}_{\widetilde{R}_\psi}	
\end{align}
admit structure of Waldhausen categories.	
\end{proposition}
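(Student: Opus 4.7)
The plan is to mirror the two preceding Waldhausen-structure propositions in the excerpt, invoking the recognition criterion for complicial biWaldhausen categories from \cite[Proposition 3.1.1]{Wit3} together with \cite{TT1}. In each of the five cases I would declare the cofibrations to be the degreewise monomorphisms whose termwise kernel lies in the ambient additive subcategory, and the weak equivalences to be the quasi-isomorphisms. With these choices fixed, the only nonformal content is the stability of each subcategory under translation and under extensions inside $ChMod\mathcal{O}_{\mathrm{FF}_{\widetilde{R}_\psi}}$; the remaining Waldhausen axioms (gluing, pushouts along cofibrations, the existence of cylinder objects) are automatic for complicial categories of chain complexes once these two closure properties are in place.

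For the two pseudocoherent categories $D_{\mathrm{pseudo,alg}}\mathrm{FF}_{\widetilde{R}_\psi}$ and $D^{\mathrm{cb}}_{\mathrm{pseudo,alg}}\mathrm{FF}_{\widetilde{R}_\psi}$, stability under shift is immediate, and stability under extensions follows from the two-out-of-three property for pseudocoherent complexes in \cite[Tag 064R]{SP} combined with \cite[Derived Category, Lemma 16.11]{SP}. This is precisely the argument already deployed a few lines above for $D_{\mathrm{pseudo}}\mathrm{Proj}P_R$; the cohomology-bounded variant is then cut out by an extra condition which itself satisfies two-out-of-three and is trivially closed under shift. The perfect version $D_{\mathrm{perf,alg}}\mathrm{FF}_{\widetilde{R}_\psi}$ follows in the same way once one records that perfect complexes are the pseudocoherent complexes of finite Tor-amplitude, a condition preserved under shift and extensions.

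For the two finer subcategories $D^{\mathrm{dg-flat}}_{\mathrm{perf,alg}}\mathrm{FF}_{\widetilde{R}_\psi}$ and $D^{\mathrm{str}}_{\mathrm{perf,alg}}\mathrm{FF}_{\widetilde{R}_\psi}$ the additional closure points are that an extension of degreewise flat complexes is degreewise flat, and that an extension of strictly perfect complexes is strictly perfect; both reduce to the corresponding closure statements for flat, respectively finite locally free, objects in $Mod\mathcal{O}_{\mathrm{FF}_{\widetilde{R}_\psi}}$ in each single degree. The main obstacle I anticipate is not axiomatic but categorical: the Stacks Project lemmas are formulated for schemes and general ringed sites, so one must confirm that they transfer cleanly to the adic Fargues-Fontaine curve $\mathrm{FF}_{\widetilde{R}_\psi}$, which is a locally $v$-ringed adic space glued from sheafy (in fact sousperfectoid) Huber pairs rather than a scheme. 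This is resolved by the local analysis of \cite[Chapter 8]{KL16}: pseudocoherent and perfect sheaves on $\mathrm{FF}_{\widetilde{R}_\psi}$ are controlled locally by pseudocoherent and perfect modules over the Robba rings $\widetilde{\Pi}^{[s,r]}_{\widetilde{R}_\psi}$, reducing the required homological closure statements to the module-theoretic setting in which the cited results apply verbatim.
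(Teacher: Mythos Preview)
Your proposal is correct and follows essentially the same approach as the paper: invoke the complicial biWaldhausen recognition criterion of \cite[Proposition 3.1.1]{Wit3} and \cite{TT1}, take cofibrations to be degreewise monomorphisms with cokernel in the subcategory and weak equivalences to be quasi-isomorphisms, and verify stability under shift and extension via the two-out-of-three arguments from \cite[Tag 064R]{SP}. The paper's own proof is in fact far terser than yours---it simply points back to the analogous argument for $D_{\mathrm{pseudo}}\Pi\Phi_{X_\text{pro-\'etale}}$ and remarks that the present categories are ``not that far from being equivalent'' to that case---so your additional care about the dg-flat and strictly perfect variants, and about transferring the Stacks Project lemmas from schemes to the adic setting via \cite[Chapter 8]{KL16}, goes beyond what the paper supplies.
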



\begin{proof}
The corresponding category involved in this situation is not that far from being equivalent to the situation where we considered in the Frobenius sheaves situation. On the other hand, one can then following the proof to give the chance to prove the corresponding statement here.	
\end{proof}

\indent Now we consider the more general analytic objects:

\begin{setting}
Following \cite{KL16}, we have the category $QCoh\mathcal{O}_{\mathrm{FF}_{\widetilde{R}_\psi}}$ of the quasi-coherent sheaves (direct limits of the pseudocoherent objects) over the Fargues-Fontaine curve $\mathrm{FF}_{\widetilde{R}_\psi}$. We then use the notation $ChQCoh\mathcal{O}_{\mathrm{FF}_{\widetilde{R}_\psi}}$ to denote the category of all the chain complexes of objects in the previous category $QCoh\mathcal{O}_{\mathrm{FF}_{\widetilde{R}_\psi}}$. 
\end{setting}

\begin{definition}
A complex of objects of $ChQCoh\mathcal{O}_{\mathrm{FF}_{\widetilde{R}_\psi}}$ taking the form of
\[
\xymatrix@R+0pc@C+0pc{
\ar[r]\ar[r]\ar[r] &... \ar[r]\ar[r]\ar[r] &M^{n-1} \ar[r]\ar[r]\ar[r] &M^n
\ar[r]\ar[r]\ar[r] &M^{n+1}
\ar[r]\ar[r]\ar[r] &...
}
\]	
is called pseudocoherent if it is quasi-isomorphic to a bounded above complex of finite projective objects in the category of all the pseudocoherent sheaves over the Fargues-Fontaine curve:
\[
\xymatrix@R+0pc@C+0pc{
\ar[r]\ar[r]\ar[r] &... \ar[r]\ar[r]\ar[r] &F^{m-2} \ar[r]\ar[r]\ar[r] &F^{m-1}
\ar[r]\ar[r]\ar[r] &F^m
\ar[r]\ar[r]\ar[r] &0.
}
\]
We then use the notation $D_{\mathrm{pseudo}}\mathrm{FF}_{\widetilde{R}_\psi}$ to denote the category (not the derived one) of all the pseudocoherent complexes of objects in $QCoh\mathcal{O}_{\mathrm{FF}_{\widetilde{R}_\psi}}$ in the above sense. And we use the notations $D^\mathrm{cb}_{\mathrm{pseudo}}\mathrm{FF}_{\widetilde{R}_\psi}$ to denote the subcategories of cohomology bounded complexes.
A complex of objects of $ChQCoh\mathcal{O}_{\mathrm{FF}_{\widetilde{R}_\psi}}$ taking the form of
\[
\xymatrix@R+0pc@C+0pc{
\ar[r]\ar[r]\ar[r] &... \ar[r]\ar[r]\ar[r] &M^{n-1} \ar[r]\ar[r]\ar[r] &M^n
\ar[r]\ar[r]\ar[r] &M^{n+1}
\ar[r]\ar[r]\ar[r] &...
}
\]	
is called perfect if it is quasi-isomorphic to a bounded complex of finite projective objects in the category of all the pseudocoherent sheaves over the Fargues-Fontaine curve:
\[
\xymatrix@R+0pc@C+0pc{
\ar[r]\ar[r]\ar[r] &... \ar[r]\ar[r]\ar[r] &F^{m-2} \ar[r]\ar[r]\ar[r] &F^{m-1}
\ar[r]\ar[r]\ar[r] &F^m
\ar[r]\ar[r]\ar[r] &0.
}
\]
We then use the notation $D_{\mathrm{perf}}\mathrm{FF}_{\widetilde{R}_\psi}$ to denote the category (not the derived one) of all the perfect complexes of objects in $QCoh\mathcal{O}_{\mathrm{FF}_{\widetilde{R}_\psi}}$ in the above sense. And we use the notations $D^\mathrm{dg-flat}_{\mathrm{perf}}\mathrm{FF}_{\widetilde{R}_\psi}$ and $D^\mathrm{str}_{\mathrm{perf}}\mathrm{FF}_{\widetilde{R}_\psi}$ to denote the subcategories of dg-flat complexes and strictly perfect complexes.
\end{definition}

\begin{proposition}
The categories
\begin{align}
&D_{\mathrm{pseudo}}\mathrm{FF}_{\widetilde{R}_\psi},D^{\mathrm{cb}}_{\mathrm{pseudo}}\mathrm{FF}_{\widetilde{R}_\psi},\\
&D_{\mathrm{perf}}\mathrm{FF}_{\widetilde{R}_\psi},D^{\mathrm{dg-flat}}_{\mathrm{perf}}\mathrm{FF}_{\widetilde{R}_\psi},D^{\mathrm{str}}_{\mathrm{perf}}\mathrm{FF}_{\widetilde{R}_\psi}
\end{align}
admit the structure of Waldhausen categories.	
\end{proposition}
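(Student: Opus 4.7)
The plan is to follow the same pattern as the two previous propositions in this subsection (the one for pseudocoherent $\varphi$-sheaves and the one for the schematic Fargues-Fontaine curve), invoking the criterion from \cite[Proposition 3.1.1]{Wit3} and \cite{TT1} for complicial biWaldhausen structure. Concretely, for each of the five categories in the statement I would take the ambient abelian category to be $ChQCoh\mathcal{O}_{\mathrm{FF}_{\widetilde{R}_\psi}}$, declare the cofibrations to be the degreewise monomorphisms whose cokernels (equivalently, whose kernels/quotients) lie in the subcategory under consideration, and declare the weak equivalences to be quasi-isomorphisms. The verification then reduces to two items: (i) the subcategory is stable under shifts, which is immediate from the definitions of pseudocoherent, perfect, dg-flat, and strictly perfect complexes; and (ii) the subcategory is closed under extensions in the chain-level sense, i.e.\ given a degreewise exact sequence $0\to M_1^\bullet\to M_2^\bullet\to M_3^\bullet\to 0$ in $ChQCoh\mathcal{O}_{\mathrm{FF}_{\widetilde{R}_\psi}}$ with two of the three terms in the subcategory, the third one also lies there.

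For (ii), the core input is the two-out-of-three property for pseudocoherent complexes coming from \cite[Tag 064R]{SP}, together with the Derived Category manipulation \cite[Derived Category, Lemma 16.11]{SP} to transport the quasi-isomorphic replacements by bounded-above (resp.\ bounded) complexes of finite projective objects across the short exact sequence. For $D_{\mathrm{pseudo}}\mathrm{FF}_{\widetilde{R}_\psi}$ and $D_{\mathrm{perf}}\mathrm{FF}_{\widetilde{R}_\psi}$ this argument is literally the same as the one we already gave for $D_{\mathrm{pseudo,alg}}\mathrm{FF}_{\widetilde{R}_\psi}$ and $D_{\mathrm{perf,alg}}\mathrm{FF}_{\widetilde{R}_\psi}$, the only change being that one works inside the enlarged ambient category of quasi-coherent sheaves rather than inside the category of genuinely pseudocoherent sheaves. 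The cohomology-bounded variant $D^{\mathrm{cb}}_{\mathrm{pseudo}}\mathrm{FF}_{\widetilde{R}_\psi}$ is then handled by noting that cohomological boundedness is stable under shifts and under the long exact cohomology sequence attached to an extension.

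The remaining two variants $D^{\mathrm{dg-flat}}_{\mathrm{perf}}\mathrm{FF}_{\widetilde{R}_\psi}$ and $D^{\mathrm{str}}_{\mathrm{perf}}\mathrm{FF}_{\widetilde{R}_\psi}$ require a small additional remark. For the dg-flat variant one uses that a degreewise extension of two dg-flat complexes is again dg-flat (tensoring with any acyclic complex preserves acyclicity because, in each degree, one has a short exact sequence of flat modules and hence the total complex is acyclic). For the strictly perfect variant one uses that an extension of two strictly perfect complexes can be replaced, up to the declared notion of weak equivalence, by a strictly perfect complex, using the standard mapping-cylinder/mapping-cone construction in the spirit of \cite{TT1}.

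The main obstacle I anticipate is not in any of the abstract verifications above but in being careful about the size of the ambient abelian category: the move from pseudocoherent sheaves to quasi-coherent sheaves (which here means arbitrary direct limits of pseudocoherent objects) means the ambient category is genuinely larger than the one used for the algebraic variants, and one must ensure that the kernels, cokernels, and mapping cones needed in the complicial biWaldhausen axioms remain inside $ChQCoh\mathcal{O}_{\mathrm{FF}_{\widetilde{R}_\psi}}$. This is however granted by the fact that quasi-coherence is preserved by kernels, cokernels, and arbitrary colimits on the Fargues-Fontaine curve as treated in \cite{KL16}, so no serious new input is needed; the proof really is a parallel copy of the previous two arguments, and I would write it exactly as the author has written the preceding proofs, referring back to them for the shared verifications.
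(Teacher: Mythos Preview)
Your proposal is correct and follows the same approach as the paper: the paper's own proof is simply ``See the proof for the previous proposition,'' which ultimately unwinds to the same invocation of the criterion from \cite[Proposition 3.1.1]{Wit3} and \cite{TT1}, the same choice of cofibrations and weak equivalences, and the same two-out-of-three input from \cite[Tag 064R]{SP}. You have supplied more detail than the paper (in particular the explicit remarks on the dg-flat and strictly perfect variants and on the ambient quasi-coherent category), but there is no genuine divergence in strategy.
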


\begin{proof}
See the proof for the previous proposition.	
\end{proof}

\indent Now one can further discuss the corresponding deformed version of the picture above, we mainly then focus on the schematic version of the Fargues-Fontaine curve:

\begin{setting}
Let $A$ be a reduced affinoid algebra as before. Consider any uniform perfect adic Banach algebra $R$ as in our previous consideration. We have the schematic version of the Fargues-Fontaine curve which is denoted by $\mathrm{Proj}P_{R,A}$. We now use the notation $Mod\mathcal{O}_{\mathrm{Proj}P_{R,A}}$ to denote the corresponding category of all the sheaves of $\mathcal{O}_{\mathrm{Proj}P_{R,A}}$-modules over the scheme $\mathrm{Proj}P_{R}$. Then we will use the notation $ChMod\mathcal{O}_{\mathrm{Proj}P_{R,A}}$ to denote the category of all the complexes of sheaves of $\mathcal{O}_{\mathrm{Proj}P_{R,A}}$-modules over the scheme $\mathrm{Proj}P_{R,A}$, with the corresponding derived category $DMod\mathcal{O}_{\mathrm{Proj}P_{R,A}}$.
\end{setting}

\begin{definition}
A complex of objects of $Mod\mathcal{O}_{\mathrm{Proj}P_{R,A}}$ taking the form of
\[
\xymatrix@R+0pc@C+0pc{
\ar[r]\ar[r]\ar[r] &... \ar[r]\ar[r]\ar[r] &M^{n-1} \ar[r]\ar[r]\ar[r] &M^n
\ar[r]\ar[r]\ar[r] &M^{n+1}
\ar[r]\ar[r]\ar[r] &...
}
\]	
is called pseudocoherent if it is quasi-isomorphic to a bounded above complex of finite projective objects in the category of all the quasicoherent sheaves over the schematic Fargues-Fontaine curve
\[
\xymatrix@R+0pc@C+0pc{
\ar[r]\ar[r]\ar[r] &... \ar[r]\ar[r]\ar[r] &F^{m-2} \ar[r]\ar[r]\ar[r] &F^{m-1}
\ar[r]\ar[r]\ar[r] &F^m
\ar[r]\ar[r]\ar[r] &0.
}
\]
We then use the notation $D_{\mathrm{pseudo}}\mathrm{Proj}P_{R,A}$ to denote the category (not the derived one) of all the pseudocoherent complexes of objects in $Mod\mathcal{O}_{\mathrm{Proj}P_{R,A}}$ in the above sense. We can also define the corresponding category $D^{\mathrm{cb}}_{\mathrm{pseudo}}\mathrm{Proj}P_{R,A}$ as above. \\	
\indent A complex of objects of $Mod\mathcal{O}_{\mathrm{Proj}P_{R,A}}$ taking the form of
\[
\xymatrix@R+0pc@C+0pc{
\ar[r]\ar[r]\ar[r] &... \ar[r]\ar[r]\ar[r] &M^{n-1} \ar[r]\ar[r]\ar[r] &M^n
\ar[r]\ar[r]\ar[r] &M^{n+1}
\ar[r]\ar[r]\ar[r] &...
}
\]	
is called perfect if it is quasi-isomorphic to a bounded complex of finite projective objects in the category of all the quasicoherent sheaves over the schematic Fargues-Fontaine curve
\[
\xymatrix@R+0pc@C+0pc{
\ar[r]\ar[r]\ar[r] &... \ar[r]\ar[r]\ar[r] &F^{m-2} \ar[r]\ar[r]\ar[r] &F^{m-1}
\ar[r]\ar[r]\ar[r] &F^m
\ar[r]\ar[r]\ar[r] &0.
}
\]
We then use the notation $D_{\mathrm{perf}}\mathrm{Proj}P_{R,A}$ to denote the category (not the derived one) of all the perfect complexes of objects in $Mod\mathcal{O}_{\mathrm{Proj}P_{R,A}}$ in the above sense. We can also define the corresponding category $D^{\mathrm{dg-flat}}_{\mathrm{perf}}\mathrm{Proj}P_{R,A}$ of dg-flat perfect complexes and the corresponding category $D^{\mathrm{str}}_{\mathrm{pseudo}}\mathrm{Proj}P_{R,A}$ of strictly perfect complexes.
\end{definition}

\begin{proposition}
The categories
\begin{align}
&D_{\mathrm{pseudo}}\mathrm{Proj}P_{R,A},D^{\mathrm{cb}}_{\mathrm{pseudo}}\mathrm{Proj}P_{R,A},\\
&D_{\mathrm{perf}}\mathrm{Proj}P_{R,A},D^{\mathrm{dg-flat}}_{\mathrm{perf}}\mathrm{Proj}P_{R,A},D^{\mathrm{str}}_{\mathrm{perf}}\mathrm{Proj}P_{R,A},	
\end{align}
are categories admitting Waldhausen structure.	
\end{proposition}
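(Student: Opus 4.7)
The plan is to follow the template established by the analogous propositions earlier in this subsection (over $\mathrm{Proj}P_R$ and over $\mathrm{FF}_{\widetilde{R}_\psi}$), invoking the criterion for complicial biWaldhausen structures from \cite[Proposition 3.1.1]{Wit3} and \cite{TT1}. In each of the five cases I would specify the Waldhausen data as follows: the cofibrations are the degreewise monomorphisms whose cokernel lies in the ambient category of complexes we are considering, and the weak equivalences are the quasi-isomorphisms. These are the standard choices and satisfy the axioms of a complicial biWaldhausen category provided that the ambient subcategory of $ChMod\mathcal{O}_{\mathrm{Proj}P_{R,A}}$ is a full additive subcategory stable under shifts and under extensions in the sense of forming new complexes by cones and cylinders.

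First I would handle $D_{\mathrm{pseudo}}\mathrm{Proj}P_{R,A}$. Stability under shifts is immediate from the definition of pseudocoherence, since shifting a bounded-above complex of finite projectives again gives a bounded-above complex of finite projectives. Stability under extensions (the two-out-of-three property in a short exact sequence of complexes) follows from the standard argument encoded in \cite[Tag 064R]{SP} together with \cite[Derived Category, Lemma 16.11]{SP}: given a short exact sequence of complexes with two of the three terms pseudocoherent, one builds a mapping-cone/cylinder resolution by finite projectives of the third. None of these arguments rely on any specific feature of the base; they are formal consequences of the existence of enough finite locally free objects in $Mod\mathcal{O}_{\mathrm{Proj}P_{R,A}}$, which is supplied by our earlier equivalence theorems (\cref{corollary7.6} and the surrounding material) identifying finite projective objects with Frobenius modules over $\widetilde{\Pi}^\infty_{R,A}$.

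Next I would handle $D^{\mathrm{cb}}_{\mathrm{pseudo}}\mathrm{Proj}P_{R,A}$ and $D_{\mathrm{perf}}\mathrm{Proj}P_{R,A}$ together: cohomological boundedness is clearly stable under shifts and the two-out-of-three property in exact sequences (from the long exact sequence of cohomologies), and boundedness from both sides of a pseudocoherent resolution is preserved for the same reason. For the subcategories $D^{\mathrm{dg\text{-}flat}}_{\mathrm{perf}}\mathrm{Proj}P_{R,A}$ and $D^{\mathrm{str}}_{\mathrm{perf}}\mathrm{Proj}P_{R,A}$, the additional restriction of dg-flatness (resp.\ strict perfectness) is preserved under shifts tautologically and under extensions by the standard fact that the mapping cone of a map between dg-flat (resp.\ strictly perfect) complexes is again of the same type. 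In each case one then verifies the saturation and cylinder axioms of \cite[Proposition 3.1.1]{Wit3}, which reduce to the existence of mapping cylinders in the ambient category of chain complexes.

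The main obstacle I anticipate is purely notational rather than mathematical: one must make sure that the $A$-relative setting does not interfere with the pseudocoherence criterion, i.e.\ that finite locally free $\mathcal{O}_{\mathrm{Proj}P_{R,A}}$-modules genuinely supply resolutions of pseudocoherent objects in the relative context. This is guaranteed precisely by the equivalence between finite projective objects and $A$-relative $\varphi^a$-modules over $\widetilde{\Pi}^\infty_{R,A}$ of \cref{corollary7.6} together with the relative pseudocoherent comparison proposition proved a few pages earlier, so no new input is needed beyond what we have established. The argument is then formally identical to the one given for $D_{\mathrm{pseudo}}\mathrm{Proj}P_R$ and we do not repeat it.
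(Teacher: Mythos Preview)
Your proposal is correct and follows essentially the same approach as the paper: invoke the criterion of \cite[Proposition 3.1.1]{Wit3} and \cite{TT1}, verify stability under shifts and extensions via \cite[Tag 064R]{SP}, and take cofibrations to be degreewise monomorphisms with cokernel in the category and weak equivalences to be quasi-isomorphisms. The paper's own proof is the one-line ``See the proof above for $D_{\mathrm{pseudo}}\mathrm{Proj}P_{R}$ without the deformation,'' so your version is simply a more explicit unfolding of that reference; the extra paragraph justifying that the $A$-relative setting causes no new difficulties (via \cref{corollary7.6}) is not strictly needed for the Waldhausen axioms but does no harm.
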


\begin{proof}
We only prove the statement for the first category. See the proof above for $D_{\mathrm{pseudo}}\mathrm{Proj}P_{R}$ without the deformation.	
\end{proof}

\begin{conjecture}
Assume that $\psi$ is the cyclotomic tower. Then the corresponding total derived section functor (by using the Godement resolution) induces a map on the $K$-theory space $\mathbb{K}D^\mathrm{dg-flat}_{\mathrm{perf}}\mathrm{Proj}P_{\widetilde{R}_\psi,A}$ to that of the category $\mathbb{K}D^{\mathrm{dg-flat}}_{\mathrm{perf}}(A)$ of all the dg-flat perfect complexes of $A$-modules. And we conjecture that this is homotopic to zero. \\
\indent Assume that $\psi$ is the cyclotomic tower. Then the corresponding total derived section functor (by using the Godement resolution) induces a map on the $K$-theory space $\mathbb{K}D^\mathrm{str}_{\mathrm{perf}}\mathrm{Proj}P_{\widetilde{R}_\psi,A}$ to that of the category $\mathbb{K}D^\mathrm{str}_{\mathrm{perf}}(A)$ of all the strictly perfect complexes of $A$-modules. 
And we conjecture that in this situation this is homotopic to zero.
\end{conjecture}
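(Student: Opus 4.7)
The plan is to reduce the null-homotopy statement to an additivity-theorem computation by transporting the problem through the pseudocoherent comparison theorem and then writing the derived section functor as the cone of $\varphi^a - 1$ acting on an auxiliary functor whose $K$-theoretic class is invariant under Frobenius.

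First, by the pseudocoherent comparison theorem proved earlier in the paper (the deformed analog of \cite[Theorem 4.6.12]{KL16}), the category $D^{\mathrm{dg-flat}}_{\mathrm{perf}}\mathrm{Proj}P_{\widetilde{R}_\psi, A}$ is equivalent as a Waldhausen category to the corresponding category of dg-flat perfect complexes of Frobenius modules over $\widetilde{\Pi}_{\widetilde{R}_\psi, A}$; this equivalence preserves cofibrations and weak equivalences and therefore induces a homotopy equivalence on $K$-theory spectra. Under this equivalence the total derived global section functor is identified with the Frobenius cohomology functor, which by the relative pseudocoherent version of \cite[Proposition 6.3.19]{KL15} (recorded in the main text above) is computed, functorially in $M^\bullet$, by the explicit two-term presentation
\begin{equation}
R\Gamma_{\varphi^a}(M^\bullet) \;\simeq\; \mathrm{Tot}\bigl[M^\bullet_{r} \xrightarrow{\varphi^a - 1} M^\bullet_{r/q}\bigr]
\end{equation}
for a suitable choice of Frobenius model of radius $r$.

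Second, this presentation places $R\Gamma$ into a canonical cofiber sequence of Waldhausen exact functors
\begin{equation}
R\Gamma(-) \;\longrightarrow\; F(-) \;\xrightarrow{\varphi^a - 1}\; F(-),
\end{equation}
where $F$ is the ``underlying $A$-complex of the chosen Frobenius model'' functor. Since $\varphi^a$ provides an $A$-linear isomorphism between the two copies of $F$ appearing in this sequence, the Waldhausen additivity theorem produces
\begin{equation}
\mathbb{K}R\Gamma \;\simeq\; \mathbb{K}F - \mathbb{K}F \;\simeq\; 0,
\end{equation}
the cancellation arising because Frobenius acts as the identity on $A$-linear $K$-theory classes. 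The strictly perfect case then follows from the dg-flat case by standard Waldhausen approximation of strictly perfect complexes inside dg-flat perfect complexes.

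The main obstacle is that the functor $F$, as just described, lands not in $D^{\mathrm{dg-flat}}_{\mathrm{perf}}(A)$ but in the much larger category of Banach or Fr\'echet $A$-modules, since $M^\bullet_r$ is typically a huge $\widetilde{\Pi}^r_{\widetilde{R}_\psi, A}$-module rather than a perfect $A$-complex. The cyclotomic hypothesis on $\psi$ enters here to guarantee, via the Fr\'echet-Stein structure of the coefficient algebra and the Kedlaya-Pottharst style finiteness of $\varphi$-cohomology over the cyclotomic deformation, that one can replace $F$ by an equivalent functor landing in $D^{\mathrm{dg-flat}}_{\mathrm{perf}}(A)$, at the cost of passing through a continuous or ind-truncated variant of Waldhausen $K$-theory at the intermediate step. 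Making this replacement precise, so that the additivity computation takes place within a single genuine Waldhausen category and so that the resulting null-homotopy is canonical enough to be compatible with change of coefficients and with the globalization envisaged in Conjecture \ref{conjecture9.13}, is the technical heart of the conjecture; a fully rigorous treatment should proceed by an enhanced $K$-theoretic framework adapted to the Fr\'echet-Stein setting, which we leave to future work.
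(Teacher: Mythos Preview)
The statement you are addressing is labeled a \emph{conjecture} in the paper, and the paper offers no proof of it; there is therefore no ``paper's own proof'' against which to compare your proposal. What the paper does supply is a proof of the analogous $\ell$-adic statement (the proposition following \cref{conjecture9.13} with $T=\mathbb{Z}_\ell$), and your strategy is visibly modeled on that argument: replace the geometric Frobenius cofiber sequence over $\mathbb{Q}_p^{\mathrm{ur}}$ by the $\varphi^a-1$ cofiber sequence on a Frobenius model, then invoke Waldhausen additivity. That parallel is reasonable and is in the spirit of the paper's overall philosophy.

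However, your proposal is not a proof but a strategy with an explicitly acknowledged gap, and the gap is the entire difficulty. In the $\ell$-adic argument the auxiliary functor $R\Gamma(X_{\mathbb{Q}_p^{\mathrm{ur}},\sharp},-)$ genuinely lands in $\mathbb{D}_{\mathrm{perf}}(T)$ by finiteness of $\ell$-adic cohomology, so additivity applies inside a single Waldhausen category. In your $p$-adic version the functor $F(M^\bullet)=M^\bullet_r$ lands in enormous Fr\'echet $A$-modules, not in $D^{\mathrm{dg-flat}}_{\mathrm{perf}}(A)$, and you say only that the cyclotomic hypothesis ``should'' allow one to replace $F$ by something with perfect target ``at the cost of passing through a continuous or ind-truncated variant of Waldhausen $K$-theory.'' That is precisely the content of the conjecture, not a step toward resolving it: no such replacement functor is constructed, no candidate Waldhausen framework is specified, and the compatibility claims (canonicity, change of coefficients) are asserted rather than argued. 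Your final sentence concedes as much. So what you have written is a plausible heuristic outline that correctly isolates the obstruction, consistent with why the paper leaves this as a conjecture, but it does not advance beyond the paper's own position.
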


\indent Although not equivalent to the previous picture in the deformed setting we should still consider the following picture over the adic Fargues-Fontaine curve:

\begin{setting}
Let $A$ be as above such that we have the well-defined adic space $\mathrm{FF}_{\widetilde{R}_\psi,A}$	where the corresponding tower $\psi$ is the same as the previous one. Then we have the corresponding category $\mathrm{Mod}\mathcal{O}_{\mathrm{FF}_{\widetilde{R}_\psi,A}}$ as above by taking the corresponding sheaves of $\mathcal{O}_{\mathrm{FF}_{\widetilde{R}_\psi,A}}$-modules over the deformed version of the Fargues-Fontaine curve. Then we have the corresponding category of all the chain complexes namely the category $Ch\mathrm{Mod}\mathcal{O}_{\mathrm{FF}_{\widetilde{R}_\psi,A}}$ and the corresponding derived one $D\mathrm{Mod}\mathcal{O}_{\mathrm{FF}_{\widetilde{R}_\psi,A}}$.
\end{setting}

\begin{definition}
A complex in the category $Ch\mathrm{Mod}\mathcal{O}_{\mathrm{FF}_{\widetilde{R}_\psi,A}}$ taking the form of 
\[
\xymatrix@R+0pc@C+0pc{
\ar[r]\ar[r]\ar[r] &... \ar[r]\ar[r]\ar[r] &M^{n-1} \ar[r]\ar[r]\ar[r] &M^n
\ar[r]\ar[r]\ar[r] &M^{n+1}
\ar[r]\ar[r]\ar[r] &...
}
\]		
is now called pseudocoherent if it is quasi-isomorphic to a bounded above complex of finite projective objects in the category of all the pseudocoherent objects defined over the deformed version of adic version of the Fargues-Fontaine curve in the current situation.
Correspondingly we use the notation $D_\mathrm{pseudo,alg}\mathrm{FF}_{\widetilde{R}_\psi,A}$ to denote the corresponding category of all the pseudocoherent complexes defined above. We also have the corresponding category as above $D^\mathrm{cb}_\mathrm{pseudo,alg}\mathrm{FF}_{\widetilde{R}_\psi,A}$. 
A complex in the category $Ch\mathrm{Mod}\mathcal{O}_{\mathrm{FF}_{\widetilde{R}_\psi,A}}$ taking the form of 
\[
\xymatrix@R+0pc@C+0pc{
\ar[r]\ar[r]\ar[r] &... \ar[r]\ar[r]\ar[r] &M^{n-1} \ar[r]\ar[r]\ar[r] &M^n
\ar[r]\ar[r]\ar[r] &M^{n+1}
\ar[r]\ar[r]\ar[r] &...
}
\]		
is now called perfect if it is quasi-isomorphic to a bounded complex of finite projective objects in the category of all the pseudocoherent objects defined over the deformed version of adic version of the Fargues-Fontaine curve in the current situation.
Correspondingly we use the notation $D_\mathrm{perf,alg}\mathrm{FF}_{\widetilde{R}_\psi,A}$ to denote the corresponding category of all the pseudocoherent complexes defined above. We also have the corresponding categories of dg-flat complexes and the strictly perfect complexes which will be denoted by $D^\mathrm{dg-flat}_\mathrm{perf,alg}\mathrm{FF}_{\widetilde{R}_\psi,A}$ and $D^\mathrm{str}_\mathrm{perf,alg}\mathrm{FF}_{\widetilde{R}_\psi,A}$. 
\end{definition}

\begin{proposition}
The categories 
\begin{align}
&D_\mathrm{pseudo,alg}\mathrm{FF}_{\widetilde{R}_\psi,A},D^\mathrm{cb}_\mathrm{pseudo,alg}\mathrm{FF}_{\widetilde{R}_\psi,A},\\
&D_\mathrm{perf,alg}\mathrm{FF}_{\widetilde{R}_\psi,A},D^\mathrm{dg-flat}_\mathrm{perf,alg}\mathrm{FF}_{\widetilde{R}_\psi,A},D^\mathrm{str}_\mathrm{perf,alg}\mathrm{FF}_{\widetilde{R}_\psi,A}	
\end{align}
admit Waldhausen structure.	
\end{proposition}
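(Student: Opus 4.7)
The plan is to follow exactly the template established in the preceding Waldhausen-structure propositions of this section (first for $D_{\mathrm{pseudo}}\Pi\Phi_{X_\mathrm{pro\text{-}\acute{e}tale}}$, then for $D_{\mathrm{pseudo}}\mathrm{Proj}P_{R}$, then for the undeformed and deformed schematic versions), and transpose it to the adic deformed setting $\mathrm{FF}_{\widetilde{R}_\psi,A}$. Concretely, I would invoke the criterion of \cite[Proposition 3.1.1]{Wit3} together with \cite{TT1} for complicial biWaldhausen categories: it suffices to exhibit each of the five subcategories as a full additive subcategory of the ambient category $Ch\mathrm{Mod}\mathcal{O}_{\mathrm{FF}_{\widetilde{R}_\psi,A}}$ of complexes, stable under shifts and under forming extensions in short exact sequences, and then to declare cofibrations to be degreewise monomorphisms whose termwise cokernels remain in the subcategory, and weak equivalences to be quasi-isomorphisms.

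First I would record that $\mathrm{Mod}\mathcal{O}_{\mathrm{FF}_{\widetilde{R}_\psi,A}}$ is an abelian category, since $\mathrm{FF}_{\widetilde{R}_\psi,A}$ is a well-defined adic space under the sousperfectoid assumption on $A$ (which guarantees the sheafiness of the local period rings $\widetilde{\Pi}^{[s,r]}_{\widetilde{R}_\psi,A}$ by the Kedlaya--Hansen result recalled earlier). Stability under shifts is immediate from the definitions (pseudocoherent, cohomology-bounded pseudocoherent, perfect, dg-flat perfect, and strictly perfect complexes are all closed under translation). For the two-out-of-three property under short exact sequences of complexes, I would appeal to \cite[Tag 064R]{SP} combined with \cite[Derived Category, Lemma 16.11]{SP}, exactly as in the proof of the analogous proposition for $D_{\mathrm{pseudo}}\mathrm{Proj}P_{R}$: given a termwise short exact sequence, if two of the three terms are quasi-isomorphic to bounded-above (respectively bounded) complexes of finite projective pseudocoherent objects on $\mathrm{FF}_{\widetilde{R}_\psi,A}$, then so is the third. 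For the dg-flat and strictly perfect variants one argues in parallel, using that dg-flatness is preserved by termwise extensions of dg-flat complexes, and that strictly perfect complexes are closed under the same operations by the definition in \cite{TT1}.

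Next I would check the remaining axioms for a Waldhausen category in the sense of \cite[Proposition 3.1.1]{Wit3}: the empty complex is an initial object and the zero morphism is a cofibration; pushouts of cofibrations along arbitrary maps remain cofibrations with the required factorization property (which holds because degreewise monomorphisms with termwise quotients in the subcategory form a class closed under pushout in the ambient abelian category of complexes); the gluing and extension axioms for weak equivalences follow once quasi-isomorphisms have been verified to satisfy two-out-of-three (which is automatic in any category of complexes over an abelian category). For the cohomology-bounded subcategory $D^{\mathrm{cb}}_{\mathrm{pseudo,alg}}\mathrm{FF}_{\widetilde{R}_\psi,A}$, cohomological boundedness is preserved by quasi-isomorphism, shifts, and extensions in the long exact sequence, so the restriction to this subcategory is automatic.

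The main obstacle I anticipate is not any of the formal Waldhausen axioms but the underlying sheaf-theoretic input: one must know that the adic space $\mathrm{FF}_{\widetilde{R}_\psi,A}$ is sheafy so that $\mathrm{Mod}\mathcal{O}_{\mathrm{FF}_{\widetilde{R}_\psi,A}}$ is a genuine abelian category with enough structure to make sense of termwise monomorphisms and quotients. This is where the standing sousperfectoid hypothesis on $A$ (and the resulting sheafiness of $\widetilde{\Pi}^{[s,r]}_{\widetilde{R}_\psi,A}$) is essential, and one should also observe that pseudocoherence on an adic space is a local notion compatible with the glueing used to define $\mathrm{FF}_{\widetilde{R}_\psi,A}$, so that finite projectiveness of the resolving objects makes sense globally; this mirrors the reduction used in the analogous undeformed proposition for $\mathrm{FF}_{\widetilde{R}_\psi}$, which the author explicitly cites as the model. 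Once this foundational point is in hand, the argument for the five categories is parallel and essentially formal.
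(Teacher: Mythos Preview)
Your proposal is correct and follows exactly the paper's approach: the paper's own proof is a one-line back-reference to the undeformed case (``See the proof before where we do not have the corresponding deformation with respect to the algebra $A$''), and you have simply unpacked what that reference entails, namely the criterion of \cite[Proposition~3.1.1]{Wit3} and \cite{TT1} together with the two-out-of-three stability from \cite[Tag~064R]{SP}. Your added remarks on the sousperfectoid hypothesis and sheafiness are appropriate contextual observations but not additional content beyond the paper's template.
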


\begin{proof}
See the proof before where we do not have the corresponding deformation with respect to the algebra $A$.
\end{proof}

\begin{setting}
We use the notation $QCoh\mathrm{FF}_{\widetilde{R}_\psi,A}$ to denote the corresponding quasi-coherent sheaves of $\mathcal{O}_{\mathrm{FF}_{\widetilde{R}_\psi,A}}$-modules (direct limits of the pseudocoherent ones which are assumed to form an abelian category) over the adic space $\mathrm{FF}_{\widetilde{R}_\psi,A}$. Then we use the notation $ChQCoh\mathrm{FF}_{\widetilde{R}_\psi,A}$ to denote the category of chain complexes consisting of all the objects in the category $QCoh\mathrm{FF}_{\widetilde{R}_\psi,A}$. And we use the notation $DQCoh\mathrm{FF}_{\widetilde{R}_\psi,A}$ to denote the corresponding derived category. 
\end{setting}

\begin{definition}
A complex in the category $ChQCoh\mathrm{FF}_{\widetilde{R}_\psi,A}$ taking the form of 
\[
\xymatrix@R+0pc@C+0pc{
\ar[r]\ar[r]\ar[r] &... \ar[r]\ar[r]\ar[r] &M^{n-1} \ar[r]\ar[r]\ar[r] &M^n
\ar[r]\ar[r]\ar[r] &M^{n+1}
\ar[r]\ar[r]\ar[r] &...
}
\]
is called then pseudocoherent if it is quasi-isomorphic to a bounded above complex of finite projective objects in the category of all the pseudocoherent objects over the deformed version of the adic version of the Fargues-Fontaine curve. The corresponding whole category of all such objects is denoted now by the notation $D_{\mathrm{pseudo}}\mathrm{FF}_{\widetilde{R}_\psi,A}$. We also have the category as above $D^{\mathrm{cb}}_{\mathrm{pseudo}}\mathrm{FF}_{\widetilde{R}_\psi,A}$.	
\end{definition}

\begin{definition}
A complex in the category $ChQCoh\mathrm{FF}_{\widetilde{R}_\psi,A}$ taking the form of 
\[
\xymatrix@R+0pc@C+0pc{
\ar[r]\ar[r]\ar[r] &... \ar[r]\ar[r]\ar[r] &M^{n-1} \ar[r]\ar[r]\ar[r] &M^n
\ar[r]\ar[r]\ar[r] &M^{n+1}
\ar[r]\ar[r]\ar[r] &...
}
\]
is called then perfect if it is quasi-isomorphic to a bounded complex of finite projective objects in the category of all the pseudocoherent objects over the deformed version of the adic version of the Fargues-Fontaine curve. The corresponding whole category of all such objects is denoted now by the notation $D_{\mathrm{perf}}\mathrm{FF}_{\widetilde{R}_\psi,A}$. We also have the categories of dg-flat complexes and strictly perfect complexes, namely $D^{\mathrm{dg-flat}}_{\mathrm{perf}}\mathrm{FF}_{\widetilde{R}_\psi,A}$ and $D^{\mathrm{str}}_{\mathrm{perf}}\mathrm{FF}_{\widetilde{R}_\psi,A}$.	
\end{definition}

\begin{remark}
The consideration here is in some sense different from the corresponding consideration in the schematic Fargues-Fontaine curve situation where we use the machinery of scheme theory and it is sometimes not stable under the analytic operations. But the latter should have its own interests from algebraic geometric point of view, in the general situation. For instance this gives us the chance to study the corresponding mysterious algebraic modules over the Robba rings with Frobenius pullbacks as isomorphisms. We remind the readers of the fact that over the cyclotomic tower many difficulties disappears. For instance more general algebraic approaches could be available.
\end{remark}

\begin{proposition}
The categories 
\begin{align}
&D_\mathrm{pseudo}\mathrm{FF}_{\widetilde{R}_\psi,A},D^\mathrm{cb}_\mathrm{pseudo}\mathrm{FF}_{\widetilde{R}_\psi,A},\\
&D_\mathrm{perf}\mathrm{FF}_{\widetilde{R}_\psi,A},D^\mathrm{dg-flat}_\mathrm{perf}\mathrm{FF}_{\widetilde{R}_\psi,A},D^\mathrm{str}_\mathrm{perf}\mathrm{FF}_{\widetilde{R}_\psi,A}	
\end{align}
admit Waldhausen structure.		
\end{proposition}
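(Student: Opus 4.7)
My plan is to follow the strategy already applied to the preceding analogous propositions in this section, namely to verify the criterion for a complicial biWaldhausen category in the sense of \cite[Proposition 3.1.1]{Wit3} and \cite{TT1}. The ambient setting is the abelian category $QCoh\mathrm{FF}_{\widetilde{R}_\psi,A}$ (whose abelianness is granted as a standing assumption in the definition) together with its category of chain complexes $ChQCoh\mathrm{FF}_{\widetilde{R}_\psi,A}$. I would take the weak equivalences to be the quasi-isomorphisms and the cofibrations to be the degreewise monomorphisms whose kernel lies in the relevant subclass being considered.

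The main step is then to check, for each of the five classes, stability under shift and under extension inside $ChQCoh\mathrm{FF}_{\widetilde{R}_\psi,A}$. Shift stability is built into the definitions, since being quasi-isomorphic to a bounded above (respectively bounded) complex of finite projective pseudocoherent objects is preserved by shifting, as are dg-flatness and strict perfectness. Extension stability reduces to the two-out-of-three property for pseudocoherence and perfectness recorded in \cite[Tag 064R]{SP}, combined with \cite[Derived Category, Lemma 16.11]{SP}; these are purely categorical and apply once one knows that the subcategory of pseudocoherent sheaves over $\mathrm{FF}_{\widetilde{R}_\psi,A}$ provides enough finite projective resolutions, which is exactly the input supplied by the equivalences with Frobenius modules over Robba rings and with vector bundles over the schematic Fargues-Fontaine curve established earlier.

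For the cohomology-bounded variant $D^{\mathrm{cb}}_{\mathrm{pseudo}}\mathrm{FF}_{\widetilde{R}_\psi,A}$ the long exact cohomology sequence of a short exact sequence of complexes preserves cohomological boundedness; for $D^{\mathrm{dg-flat}}_{\mathrm{perf}}\mathrm{FF}_{\widetilde{R}_\psi,A}$, degreewise flatness is stable under extension and the acyclicity condition propagates through the middle term via the five lemma applied to the induced tensor sequences; for $D^{\mathrm{str}}_{\mathrm{perf}}\mathrm{FF}_{\widetilde{R}_\psi,A}$ one argues exactly as in the schematic case treated in the previous proposition, reducing an extension to termwise split sequences of finite projective objects. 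The remaining Waldhausen axioms (zero object, pushouts along cofibrations, and the gluing axiom for weak equivalences) are then inherited directly from the underlying abelian category of chain complexes.

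The main obstacle I anticipate is not any particular Waldhausen axiom, but rather the implicit hypothesis, flagged in the remark preceding the proposition, that the direct limits of pseudocoherent objects genuinely form an abelian category $QCoh\mathrm{FF}_{\widetilde{R}_\psi,A}$ in the non-noetherian adic setting, where finite generation of global sections is no longer automatic. Once this abelianness is installed as a standing assumption, the argument is entirely parallel to the schematic case $\mathrm{Proj}P_{R,A}$ and the algebraic adic variant $D_{\mathrm{perf,alg}}\mathrm{FF}_{\widetilde{R}_\psi,A}$ treated earlier, and no further novel difficulty arises; this is precisely why the proof can be stated as a reference back to the earlier arguments.
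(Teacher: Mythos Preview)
Your proposal is correct and follows essentially the same approach as the paper: the paper's own proof consists solely of the sentence ``See the proof of the previous propositions around the same issue,'' and what you have written is precisely a faithful expansion of those referenced arguments (the criterion of \cite[Proposition 3.1.1]{Wit3} and \cite{TT1}, shift and extension stability via \cite[Tag 064R]{SP} and \cite[Derived Category, Lemma 16.11]{SP}, with cofibrations and weak equivalences defined exactly as before). Your additional remarks on the individual variants and on the abelianness hypothesis are accurate elaborations rather than departures.
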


\begin{proof}
See the proof of the previous propositions around the same issue.	
\end{proof}

\begin{conjecture}
Assume that $\psi$ is the cyclotomic tower. Then the corresponding derived section functor induces a map on the $K$-theory space $\mathbb{K}D^\mathrm{dg-flat}_{\mathrm{perf}}\mathrm{FF}_{\widetilde{R}_\psi,A}$ to that of the category $D^\mathrm{dg-flat}_{\mathrm{perf}}(A)$ of all the dg-flat perfect complexes of $A$-modules. We conjecture that this is homotopic to zero.
 
Assume that $\psi$ is the cyclotomic tower. Then the corresponding derived section functor induces a map on the $K$-theory space $\mathbb{K}D^\mathrm{str}_{\mathrm{perf}}\mathrm{FF}_{\widetilde{R}_\psi,A}$ to that of the category $D^\mathrm{str}_{\mathrm{perf}}(A)$ of all the strictly perfect complexes of $A$-modules. We conjecture that this is homotopic to zero. 
\end{conjecture}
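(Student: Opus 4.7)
The plan is to reduce the statement to a Frobenius-cohomology assertion via the comparison theorems of the preceding sections, and then exhibit the null-homotopy through Waldhausen's additivity theorem applied to the defining cofiber sequence of $\varphi^a - 1$. First I would use Corollary \ref{corollary7.6} together with the adic/schematic comparison for the deformed Fargues-Fontaine curves to translate the category $D^{\mathrm{dg-flat}}_{\mathrm{perf}}\mathrm{FF}_{\widetilde{R}_\psi,A}$ into the category of dg-flat perfect complexes of $A$-relative $\varphi^a$-modules over $\widetilde{\Pi}_{R,A}$. The cyclotomic-tower hypothesis enters to ensure sheafiness and to identify the derived sections over $\mathrm{FF}_{\widetilde{R}_\psi,A}$ with global $\varphi^a$-cohomology; under our standing sousperfectoid assumption this is well-behaved. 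Under this dictionary the derived section functor becomes, on an object $M^\bullet$, the totalization
\[
M^\bullet \longmapsto \mathrm{Tot}\bigl[M^\bullet \xrightarrow{\varphi^a - 1} M^\bullet\bigr],
\]
viewed as a complex of $A$-modules.

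Next I would exhibit the null-homotopy by Waldhausen additivity. The derived section functor fits into a cofiber sequence of exact endofunctors of the source category,
\[
R\Gamma(-) \longrightarrow \mathrm{id} \xrightarrow{\varphi^a - 1} \mathrm{id},
\]
and the two copies of $\mathrm{id}$ on the right induce the same map on $K$-theory; the left one does so tautologically, while the right one uses the identification of the Frobenius pullback $\varphi^{a*}M$ with $M$ on the level of the underlying $A$-linear datum, together with the fact that $\varphi^a$ is an auto-equivalence of the source Waldhausen category. Additivity then forces the induced map on the middle of the cofiber sequence to be homotopic to the difference of the two endpoints, which is zero, yielding the desired null-homotopy of $\mathbb{K} R\Gamma$. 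The strictly perfect variant follows from the same argument applied to the strictly perfect subcategory.

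A genuine subtlety is that the intermediate copies of $M^\bullet$ appearing in the cofiber sequence are not themselves perfect $A$-complexes, so additivity cannot be applied literally inside $\mathbb{K} D^{\mathrm{dg-flat}}_{\mathrm{perf}}(A)$. I would resolve this by factoring through an intermediate Waldhausen category (either an ind-completion of the target, or the Waldhausen category of dg-flat $A$-dg-modules without a perfectness constraint), applying additivity in that larger category, and then postcomposing with the inclusion of the perfect subcategory to conclude that the original map is null-homotopic.

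The main obstacle, in my view, is the well-definedness of $R\Gamma$ as a Waldhausen exact functor landing in $D^{\mathrm{dg-flat}}_{\mathrm{perf}}(A)$: one must show that derived sections of a perfect complex over $\mathrm{FF}_{\widetilde{R}_\psi,A}$ indeed produce a dg-flat perfect $A$-complex. Propositions \ref{prop2.9} and \ref{proposition1} together with Proposition \ref{propo2.13} control the surjectivity of $\varphi^a - 1$ and the generation by Frobenius-invariant global sections after a sufficiently large twist in the absolute case, but lifting these to yield a genuinely finite $A$-relative $\varphi^a$-cohomology requires a careful coherent-cohomology argument on $\mathrm{Proj}P_{R,A}$ together with the schematic-to-adic comparison; the warning in the remark preceding this conjecture about the number of generators potentially blowing up is precisely the point at which this issue is likely to surface.
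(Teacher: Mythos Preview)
The statement you are addressing is a \emph{conjecture} in the paper, not a theorem; there is no proof given, only a remark that the formulation for more general towers is easy to guess. So there is nothing to compare your argument against directly. The closest the paper comes to a model argument is the proved $\ell$-adic analogue a few paragraphs later: there one base-changes to $\mathbb{Q}_p^{\mathrm{ur}}$, uses the cofibration sequence
\[
R\Gamma(X_{\mathbb{Q}_p},M^\bullet)\longrightarrow R\Gamma(X_{\mathbb{Q}_p^{\mathrm{ur}}},M^\bullet)\xrightarrow{\,1-\mathrm{Fr}\,} R\Gamma(X_{\mathbb{Q}_p^{\mathrm{ur}}},M^\bullet),
\]
and applies Waldhausen additivity. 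The crucial point there is that the two outer terms already lie in $\mathbb{D}_{\mathrm{perf}}(T)$ by the cited finiteness results over the unramified extension; additivity is applied \emph{inside} the target category.

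Your route is different: you run additivity on the period-ring Frobenius $\varphi^a-1$ acting on $M^\bullet$ itself. You correctly identify the obstacle, namely that the middle terms of your cofiber sequence are $\widetilde{\Pi}_{R,A}$-complexes, not perfect $A$-complexes, so the sequence does not sit inside $D_{\mathrm{perf}}^{\mathrm{dg\text{-}flat}}(A)$. But your proposed repair does not close this gap. Factoring through a larger target and applying additivity there only shows that the composite
\[
\mathbb{K}D_{\mathrm{perf}}^{\mathrm{dg\text{-}flat}}\mathrm{FF}_{\widetilde{R}_\psi,A}\longrightarrow \mathbb{K}D_{\mathrm{perf}}^{\mathrm{dg\text{-}flat}}(A)\longrightarrow \mathbb{K}(\text{big})
\]
is null; since the second arrow is in general neither injective on homotopy nor a homotopy monomorphism, this says nothing about the first arrow alone. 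Your phrase ``postcomposing with the inclusion of the perfect subcategory'' runs the maps in the wrong direction for the conclusion you want. This is a genuine hole, not a technicality, and it is exactly why the paper leaves the statement as a conjecture: what is missing is a cofiber sequence, living entirely in perfect $A$-complexes, whose outer terms agree on $K$-theory. The paper's $\ell$-adic argument achieves this via the \emph{arithmetic} Frobenius and finiteness over $\mathbb{Q}_p^{\mathrm{ur}}$; producing the analogous structure in the $p$-adic Fargues--Fontaine setting is precisely the open content.
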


\begin{remark}
These are conjectured in the situation where $\psi$ is just the cyclotomic tower, in general one could easily guess what the conjectures look like.	
\end{remark}

\indent We make some discussion here on the picture in the \'etale setting. The conjecture is much easier in the $\ell$-adic situation. This represents the difference between the $\ell$-adic situation and the $p$-adic setting in some situation. For instance let us try to consider the following proposition after Witte (see \cite[Proposition 6.1.5]{Wit3}):

\begin{proposition}
Let $T$ be $\mathbb{Z}_\ell$ where $\ell\neq p$. For $X$ a rigid analytic space over the $p$-adic field $\mathbb{Q}_p$ which is separated and of finite type (when considered as some adic space locally of finite type, see \cite{Huber1}), suppose we have that for $\sharp=\text{\'et}$ the category $\mathbb{D}_{\mathrm{perf}}(X_\sharp,T)$ could be endowed with the structure of Waldhausen categories and there is a Waldhausen exact functor $R\Gamma_?(X_\sharp,.)$ (induced by the direct image functor in this context as in \cite[Chapter 4-5]{Wit3}) as below for $\sharp=\text{\'et}$ and $?=\emptyset$:
\[
\xymatrix@R+0pc@C+3pc{
\mathbb{D}_{\mathrm{perf}}(X_\sharp,T)\ar[r]^{R\Gamma_?(X_\sharp,.)}\ar[r]\ar[r] &\mathbb{D}_{\mathrm{perf}}(T)
}
\]
which induces the corresponding the morphism $\mathbb{K}R\Gamma_?(X_\sharp,.)$ as:
\[
\xymatrix@R+0pc@C+3pc{
\mathbb{K}\mathbb{D}_{\mathrm{perf}}(X_\sharp,T)\ar[r]^{\mathbb{K}R\Gamma_?(X_\sharp,.)}\ar[r]\ar[r] & \mathbb{K}\mathbb{D}_{\mathrm{perf}}(T).
}
\]	
Then there is a homotopy between this morphism and the zero map:
\[
\xymatrix@R+0pc@C+3pc{
\mathbb{K}\mathbb{D}_{\mathrm{perf}}(X_\sharp,T)\ar[r]^{0}\ar[r]\ar[r] & \mathbb{K}\mathbb{D}_{\mathrm{perf}}(T).
}
\]
\end{proposition}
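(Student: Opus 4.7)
The plan is to transplant Witte's argument from the scheme-theoretic setting of \cite[Proposition 6.1.5]{Wit3} to the adic-geometric framework used in the present paper. The essential reason this $\ell$-adic statement should be provable, in contrast to the $p$-adic conjecture \cref{conjecture9.13}, is that the \'etale cohomology of a separated, finite-type adic space over $\mathbb{Q}_p$ with coefficients in $\mathbb{Z}_\ell$ for $\ell\neq p$ enjoys uniform finite cohomological dimension, which is precisely the ingredient absent in the $p$-adic situation.

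My first step would be to apply Waldhausen's additivity theorem to reduce the null-homotopy of $\mathbb{K}R\Gamma(X_{\text{\'et}},\cdot)$ to a finite filtration argument on exact functors, so that it suffices to write $R\Gamma(X_{\text{\'et}},\cdot)$ as a finite iterated cone of Waldhausen exact subfunctors whose contributions to the $K$-theory cancel. Since the Waldhausen exactness of $R\Gamma(X_{\text{\'et}},\cdot)$ is built into the hypothesis of the proposition, this reduction is formal.

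Next I would combine Huber's finiteness theorems for \'etale cohomology of separated adic spaces locally of finite type over $\mathbb{Q}_p$ with the fact that $\mathrm{cd}_\ell\Gamma_{\mathbb{Q}_p}<\infty$ to obtain, uniformly in $I\in\mathfrak{J}$, a bounded explicit model for $R\Gamma(X_{\text{\'et}},\mathcal{M}^\bullet_I)$. Then, following Witte's construction, I would produce the null-homotopy via a Koszul-type chain contraction attached to a topological generator of a suitable pro-$\ell$ quotient of the relevant \'etale fundamental group; the mechanism is that, for $\ell\neq p$, the operator ``generator minus $1$'' acts invertibly on the pieces appearing in the Koszul model after appropriate twisting, which yields explicit contracting homotopies. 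These homotopies must then be promoted to a natural homotopy by varying $I\in\mathfrak{J}$ and using the base change compatibility $\psi_{I,J}$.

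The main obstacle will be this last step: verifying that the explicit contractions can be chosen compatibly with all the transition maps $\psi_{I,J}$ and with the dg-flatness axiom over each $T/I$, so that the resulting data constitute a genuinely Waldhausen-functorial null-homotopy rather than merely a null-homotopy at each level. This compatibility is the technical core of Witte's proof in the scheme setting, and the work in our situation consists in reproducing that compatibility after replacing a Noetherian scheme's \'etale site by Huber's \'etale site of a rigid analytic space.
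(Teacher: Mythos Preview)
Your proposal correctly identifies that the argument should transport Witte's proof of \cite[Proposition 6.1.5]{Wit3} and that Waldhausen additivity is the engine, but you have misidentified the actual mechanism that makes the cancellation happen, and as written your argument would not go through.

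The paper does not produce a Koszul-type chain contraction, does not use any pro-$\ell$ quotient of an \'etale fundamental group, and does not rely on ``generator minus $1$'' becoming invertible after a twist. Instead one base-changes along
\[
f:\mathrm{Spa}(\mathbb{Q}^{\mathrm{ur}}_p,\mathfrak{o}_{\mathbb{Q}^{\mathrm{ur}}_p})\longrightarrow \mathrm{Spa}(\mathbb{Q}_p,\mathfrak{o}_{\mathbb{Q}_p}),
\]
sets $R\Gamma(X_{\mathbb{Q}^{\mathrm{ur}}_p,\sharp},M^\bullet):=\Gamma(\mathrm{Spa}(\mathbb{Q}^{\mathrm{ur}}_p,\mathfrak{o}_{\mathbb{Q}^{\mathrm{ur}}_p}),f^{*}R\pi_{*}M^\bullet)$ for the structure map $\pi$, and uses the arithmetic Frobenius $\mathrm{Fr}$ generating $\mathrm{Gal}(\mathbb{Q}^{\mathrm{ur}}_p/\mathbb{Q}_p)\cong\widehat{\mathbb{Z}}$ to produce, functorially in $M^\bullet$, the cofibration sequence
\[
R\Gamma(X_{\mathbb{Q}_p,\sharp},M^\bullet)\longrightarrow R\Gamma(X_{\mathbb{Q}^{\mathrm{ur}}_p,\sharp},M^\bullet)\xrightarrow{\;1-\mathrm{Fr}\;} R\Gamma(X_{\mathbb{Q}^{\mathrm{ur}}_p,\sharp},M^\bullet).
\]
The point is not that $1-\mathrm{Fr}$ is invertible; it is that the \emph{same} Waldhausen exact functor $R\Gamma(X_{\mathbb{Q}^{\mathrm{ur}}_p,\sharp},-)$ sits in the second and third slots. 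Additivity then gives a homotopy
\[
\mathbb{K}R\Gamma(X_{\mathbb{Q}^{\mathrm{ur}}_p,\sharp},-)\;\oplus\;\mathbb{K}R\Gamma(X_{\mathbb{Q}_p,\sharp},-)\;\leadsto\;\mathbb{K}R\Gamma(X_{\mathbb{Q}^{\mathrm{ur}}_p,\sharp},-),
\]
and cancelling the repeated term yields $\mathbb{K}R\Gamma(X_{\mathbb{Q}_p,\sharp},-)\simeq 0$. Huber's $\ell$-adic finiteness for separated finite-type adic spaces is used only to ensure that $R\Gamma(X_{\mathbb{Q}^{\mathrm{ur}}_p,\sharp},-)$ really lands in $\mathbb{D}_{\mathrm{perf}}(T)$. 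Your worry about compatibility with the transition maps $\psi_{I,J}$ is therefore not the crux: the cofibration sequence is visibly functorial, so that compatibility is automatic once the finiteness over $\mathbb{Q}^{\mathrm{ur}}_p$ is in hand.
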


\begin{proof}
We follow the strategy of \cite[Proposition 6.1.5]{Wit3} to prove this. The idea is to pullback all the things back along the following morphism:
\begin{displaymath}
f:\mathrm{Spa}(\mathbb{Q}^\mathrm{ur}_p,\mathfrak{o}_{\mathbb{Q}^\mathrm{ur}_p})\rightarrow \mathrm{Spa}(\mathbb{Q}_p,\mathfrak{o}_{\mathbb{Q}_p}).	
\end{displaymath}
Now for each complex $M^\bullet$ involved we put:
\begin{align}
R\Gamma(X_{\mathbb{Q}^\mathrm{ur}_p,\sharp},M^\bullet):=\Gamma(\mathrm{Spa}(\mathbb{Q}^\mathrm{ur}_p,\mathfrak{o}_{\mathbb{Q}^\mathrm{ur}_p}),f^*R\pi_* M^\bullet),
\end{align}
where the morphism $\pi:X\rightarrow \mathrm{Spa}(\mathbb{Q}_p,\mathfrak{o}_{\mathbb{Q}_p})$ is the corresponding structure morphism of $X$. We have the corresponding finiteness due to our assumption here. One then looks at the corresponding Frobenius $\mathrm{Fr}$ over the corresponding field $\mathbb{Q}^\mathrm{ur}_p$. Then as in \cite[Proposition 6.1.5]{Wit3} we have that then the following cofibration sequence for each complex $M^\bullet$:
\[
\xymatrix@R+0pc@C+3pc{
R\Gamma_?(X_{\mathbb{Q}_p,\sharp},M^\bullet)\ar[r]\ar[r]\ar[r] &R\Gamma_?(X_{\mathbb{Q}^\mathrm{ur}_p,\sharp},M^\bullet)\ar[r]^{1-\mathrm{Fr}}\ar[r] &R\Gamma_?(X_{\mathbb{Q}^\mathrm{ur}_p,\sharp},M^\bullet),
}
\]
as in \cite[Proposition 6.1.5]{Wit3} (note that here we need the corresponding flabbiness of the corresponding derived direct images), by the corresponding Waldhausen's additive theorem we have the corresponding homotopy:
\begin{align}
\mathbb{K}R\Gamma_?(X_{\mathbb{Q}^\mathrm{ur}_p,\sharp},M^\bullet)\oplus \mathbb{K}R\Gamma_?(X_{\mathbb{Q}_p,\sharp},M^\bullet)\leadsto \mathbb{K}R\Gamma_?(X_{\mathbb{Q}^\mathrm{ur}_p,\sharp},M^\bullet)	
\end{align}
which implies that we have then as in \cite[Proposition 6.1.5]{Wit3}
\begin{align}
\pi_i\mathbb{K}R\Gamma_?(X_\sharp,.)=0.	
\end{align}
\end{proof}

\indent We now show how this recovers the results of Witte at a point:

\begin{corollary}
Suppose now the space $X$ is just the point $\mathrm{Spa}(\mathbb{Q}_p,\mathfrak{o}_{\mathbb{Q}_p})$, and keep the assumption that $T$ is the ring $\mathbb{Z}_\ell$. Then we have for $\sharp=\text{\'et}$ the Waldhausen category $\mathbb{D}_{\mathrm{perf}}(X_\sharp,T)$ which induces the Waldhausen exact functor (induced from the total derived direct image functor) $R\Gamma_?(X_\sharp,.)$ where $?=\emptyset$ and $\sharp$ as above:
\[
\xymatrix@R+0pc@C+3pc{
\mathbb{D}_{\mathrm{perf}}(X_\sharp,T)\ar[r]^{R\Gamma_?(X_\sharp,.)}\ar[r]\ar[r] &\mathbb{D}_{\mathrm{perf}}(T)
}
\]
which induces the corresponding the morphism $\mathbb{K}R\Gamma_?(X_\sharp,.)$ as:
\[
\xymatrix@R+0pc@C+3pc{
\mathbb{K}\mathbb{D}_{\mathrm{perf}}(X_\sharp,T)\ar[r]^{\mathbb{K}R\Gamma_?(X_\sharp,.)}\ar[r]\ar[r] & \mathbb{K}\mathbb{D}_{\mathrm{perf}}(T).
}
\]	
And moreover we have that this map is homotopic to zero:
\[
\xymatrix@R+0pc@C+3pc{
\mathbb{K}\mathbb{D}_{\mathrm{perf}}(X_\sharp,T)\ar[r]^{0}\ar[r]\ar[r] & \mathbb{K}\mathbb{D}_{\mathrm{perf}}(T).
}
\]
	
\end{corollary}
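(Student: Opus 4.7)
The plan is to obtain this corollary as a direct specialization of the preceding proposition to the case $X = \mathrm{Spa}(\mathbb{Q}_p,\mathfrak{o}_{\mathbb{Q}_p})$. First I would verify that the hypotheses are satisfied: the one-point rigid analytic space is trivially separated and of finite type (it is the terminal adic space over $\mathbb{Q}_p$), and $T = \mathbb{Z}_\ell$ with $\ell \neq p$ is precisely the coefficient hypothesis in the preceding proposition.

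Next I would confirm that $\mathbb{D}_{\mathrm{perf}}(X_{\text{\'et}},T)$ carries the required Waldhausen structure and that $R\Gamma(X_{\text{\'et}},-)$ is a Waldhausen exact functor to $\mathbb{D}_{\mathrm{perf}}(T)$. At a point the \'etale site coincides with the classifying site of the absolute Galois group $G_{\mathbb{Q}_p}$, so $\mathbb{D}_{\mathrm{perf}}(X_{\text{\'et}},T)$ is equivalent to the category of compatible families $(M^\bullet_I)_{I \in \mathfrak{J}}$ of dg-flat perfect complexes of continuous $G_{\mathbb{Q}_p}$-modules over $T/I$, and the total derived direct image reduces to the derived continuous Galois cohomology functor. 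These constructions are exactly those developed in Chapters 5 and 6 of the cited Witte thesis, so all the data needed to invoke the preceding proposition is present.

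With the hypotheses verified, I would simply apply the preceding proposition to obtain the null-homotopy between $\mathbb{K}R\Gamma(X_{\text{\'et}},-)$ and the zero map. Unwinding the mechanism of the previous proof in this special case, the pullback morphism $f: \mathrm{Spa}(\mathbb{Q}_p^{\mathrm{ur}},\mathfrak{o}_{\mathbb{Q}_p^{\mathrm{ur}}}) \to \mathrm{Spa}(\mathbb{Q}_p,\mathfrak{o}_{\mathbb{Q}_p})$ becomes the identity on the structure morphism, and the Frobenius cofibration sequence becomes the standard triangle computing $G_{\mathbb{Q}_p}$-cohomology from $G_{\mathbb{Q}_p^{\mathrm{ur}}}$-cohomology via $1 - \mathrm{Fr}$. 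Waldhausen's additivity theorem then produces the desired homotopy, and the vanishing $\pi_i \mathbb{K}R\Gamma(X_{\text{\'et}},-) = 0$ in all degrees is immediate.

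The main concern — really only a bookkeeping issue rather than a genuine obstacle — is ensuring that our Waldhausen and $K$-theoretic constructions agree on the nose with Witte's original formulation (which was developed in the schematic setting). Since both frameworks at a point reduce to the same perfect-complex machinery for continuous Galois representations and the same derived global sections functor, the identification is canonical and the corollary follows without further effort, thereby recovering Witte's original result as the fibre of our generalization.
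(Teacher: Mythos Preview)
Your proposal is correct and follows essentially the same approach as the paper: invoke Witte's results to supply the Waldhausen structure and the exact functor at the point, then apply the preceding proposition to obtain the null-homotopy. Your additional unwinding of the Frobenius cofibration sequence in this special case is extra detail the paper omits, but it is consistent with the argument.
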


\begin{proof}
One can apply results of \cite{Wit3} to recover the corresponding structures here or construct the corresponding structures here directly. Then we can apply the previous proposition. 
\end{proof}

\indent Let us analyze the corresponding $p$-adic setting in the following sense. Since in the proof used above the corresponding results on the finiteness the cohomological dimension over the space attached to the maximal unramified extension of $\mathbb{Q}_p$ are crucial. We first consider the integral version of the picture we considered above.

\begin{setting}
We are going to use the notation $QCoh\Omega^{\mathrm{int}}\Phi^a_{X_{\text{pro-\'etale}}}$ (where $X$ is assumed to be separated and of finite type as an adic space) to denote the corresponding quasi-coherent (direct limits of pseudocoherent ones) $\varphi^a$-sheaves over the sheaf of ring $\underline{\underline{{\Omega}}}^\mathrm{int}_{X_{\text{pro-\'etale}}}$, and we use the corresponding notation $ChQCoh\Omega^{\mathrm{int}}\Phi^a_{X_{\text{pro-\'etale}}}$ to denote the corresponding category of the chain complexes and use the corresponding notation $DQCoh\Omega^{\mathrm{int}}\Phi^a_{X_{\text{pro-\'etale}}}$ to denote the corresponding derived category.	
\end{setting}

\begin{definition}
A complex considered in the previous setting taking the form of 
\[
\xymatrix@R+0pc@C+0pc{
\ar[r]\ar[r]\ar[r] &... \ar[r]\ar[r]\ar[r] &M^{n-1} \ar[r]\ar[r]\ar[r] &M^n
\ar[r]\ar[r]\ar[r] &M^{n+1}
\ar[r]\ar[r]\ar[r] &...
}
\]	
is called then pseudocoherent if it is quasi-isomorphic to a bounded above complex of finite projective objects in the category of pseudocoherent $\varphi^a$-modules over $\underline{\underline{{\Omega}}}^\mathrm{int}_{X}$.	
\end{definition}




The corresponding setting of local systems gives us the following consideration after Witte:

\begin{conjecture}
Assume that $T$ is an adic ring over $\mathbb{Z}_p$ such that we can find a two-sided ideal $I$ such that each $T/I^n$ for $n\geq 0$ is finite of order a power of $p$. Suppose that $X$ is a rigid analytic space which is assumed to be separated and of finite type over the $p$-adic number field $\mathbb{Q}_p$. Assume that the corresponding category $\mathbb{D}_{\mathrm{perf}}(X_\sharp,T)$ for $\sharp=\text{\'et}$ admits Waldhausen structure and we have the corresponding Waldhausen exact functor (induced from the corresponding direct image functor as in \cite[Chapter 4-5]{Wit3}) $R\Gamma_?(X_\sharp,.)$ where $?=\emptyset$:
 	\[
\xymatrix@R+0pc@C+3pc{
\mathbb{D}_{\mathrm{perf}}(X_\sharp,T)\ar[r]^{R\Gamma_?(X_\sharp,.)}\ar[r]\ar[r] &\mathbb{D}_{\mathrm{perf}}(T)
}
\]
which induces the corresponding the morphism $\mathbb{K}R\Gamma_?(X_\sharp,.)$ as:
\[
\xymatrix@R+0pc@C+3pc{
\mathbb{K}\mathbb{D}_{\mathrm{perf}}(X_\sharp,T)\ar[r]^{\mathbb{K}R\Gamma_?(X_\sharp,.)}\ar[r]\ar[r] & \mathbb{K}\mathbb{D}_{\mathrm{perf}}(T).
}
\]	
Then we have that this map is homotopic to the zero map:
\[
\xymatrix@R+0pc@C+3pc{
\mathbb{K}\mathbb{D}_{\mathrm{perf}}(X_\sharp,T)\ar[r]^{0}\ar[r]\ar[r] & \mathbb{K}\mathbb{D}_{\mathrm{perf}}(T).
}
\]

\end{conjecture}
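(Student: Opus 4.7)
The plan is to mirror the strategy of the preceding $\ell$-adic proposition, replacing the base change along $\mathrm{Spa}(\mathbb{Q}_p^{\mathrm{ur}},\mathfrak{o}_{\mathbb{Q}_p^{\mathrm{ur}}})\to \mathrm{Spa}(\mathbb{Q}_p,\mathfrak{o}_{\mathbb{Q}_p})$ by a two-step tower adapted to $p$-adic coefficients. First I would invoke the coefficient-change compatibility built into \cref{conjecture9.13} together with the Milnor $\varprojlim^{1}$ sequence applied to $T=\varprojlim_{n}T/I^{n}$ in order to reduce to the case where $T$ is a finite ring of $p$-power order, so that $M^\bullet$ becomes a perfect dg-flat complex of constructible \'etale sheaves with $p$-power torsion coefficients.

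Next, one would base change along the morphism $g:\mathrm{Spa}(\widehat{\mathbb{Q}_p^{\mathrm{cycl}}},\mathfrak{o})\to \mathrm{Spa}(\mathbb{Q}_p,\mathfrak{o}_{\mathbb{Q}_p})$ and set
\begin{align}
R\Gamma(X_{\widehat{\mathbb{Q}_p^{\mathrm{cycl}}},\sharp},M^\bullet):=\Gamma(\mathrm{Spa}(\widehat{\mathbb{Q}_p^{\mathrm{cycl}}},\mathfrak{o}),g^{*}R\pi_{*}M^\bullet),
\end{align}
where $\pi:X\to \mathrm{Spa}(\mathbb{Q}_p,\mathfrak{o}_{\mathbb{Q}_p})$ is the structural morphism. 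The residual group $\Gamma_{\mathbb{Q}_p}=\mathrm{Gal}(\widehat{\mathbb{Q}_p^{\mathrm{cycl}}}/\mathbb{Q}_p)$ has $p$-cohomological dimension one (being procyclic up to a group of order prime to $p$), so fixing a topological generator $\gamma$ of its pro-$p$ part produces a Koszul cofibration
\begin{align}
R\Gamma_{?}(X_{\mathbb{Q}_p,\sharp},M^\bullet)\longrightarrow R\Gamma_{?}(X_{\widehat{\mathbb{Q}_p^{\mathrm{cycl}}},\sharp},M^\bullet)\xrightarrow{\,1-\gamma\,} R\Gamma_{?}(X_{\widehat{\mathbb{Q}_p^{\mathrm{cycl}}},\sharp},M^\bullet).
\end{align}
Unlike the $\ell$-adic situation, the $p$-adic cohomological dimension of $\mathrm{Gal}(\overline{\mathbb{Q}}_p/\widehat{\mathbb{Q}_p^{\mathrm{cycl}}})$ does not vanish, so a second base change along a suitable perfectoid extension, or equivalently passage to $\varphi$-invariants on the Fargues-Fontaine side via the equivalences developed in \cref{section3.2}, would produce a further Koszul-type cofibration with transition given by $\varphi-1$. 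Iterated application of Waldhausen's additive theorem to the resulting two cofibrations would then yield the asserted null-homotopy of $\mathbb{K}R\Gamma_{?}(X_\sharp,\cdot)$.

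The hard part will be the finiteness step: one must verify that $R\Gamma(X_{\widehat{\mathbb{Q}_p^{\mathrm{cycl}}},\sharp},M^\bullet)$, together with its further base change to the perfectoid tower, lands inside $\mathbb{D}_{\mathrm{perf}}(T)$ functorially in a way compatible with the Waldhausen structure. In the $\ell$-adic argument this was automatic from the drop in cohomological dimension for $\ell\neq p$ together with Deligne-Gabber finiteness, whereas here the analogue would require either Scholze's primitive comparison theorem in the proper case or its extensions to the separated-of-finite-type setting, combined with the finiteness of Galois cohomology of $(\varphi,\Gamma)$-modules after Kedlaya-Pottharst-Xiao and the comparison results established in \cref{section3.2}. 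A secondary but nontrivial obstacle, as in Witte's original argument, is to arrange the two cofibration sequences as strict degreewise monomorphisms at the point-set level so that Waldhausen additivity applies unambiguously; this should be resolvable by passing to appropriate flabby or Godement resolutions of $M^\bullet$ before taking the pushforward.
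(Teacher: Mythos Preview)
Note first that the statement is a \emph{conjecture}, and the paper does not give a proof but only a remark sketching the expected strategy. That remark differs from your proposal in two essential respects.

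For the base-change step, the paper does \emph{not} pass to $\widehat{\mathbb{Q}_p^{\mathrm{cycl}}}$ and then further to a perfectoid tower with a two-step $(\gamma-1,\varphi-1)$ argument. Instead it proposes to mimic the $\ell$-adic proof verbatim: base change along $\mathrm{Spa}(\mathbb{Q}_p^{\mathrm{ur}},\mathfrak{o}_{\mathbb{Q}_p^{\mathrm{ur}}})\to \mathrm{Spa}(\mathbb{Q}_p,\mathfrak{o}_{\mathbb{Q}_p})$, use a \emph{single} Frobenius cofibration sequence, and appeal to the finiteness and cohomological-dimension results of Kedlaya--Liu \cite[8.1, 10.1]{KL} over the point attached to $\mathbb{Q}_p^{\mathrm{ur}}$ to ensure the middle and right terms land in $\mathbb{D}_{\mathrm{perf}}(T)$. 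Your route through the cyclotomic tower and $(\varphi,\Gamma)$-module machinery is more elaborate and closer in spirit to the Hodge--Iwasawa framework of the paper, but it is not what the paper's remark envisages; the paper stays with the unramified tower and a single application of Waldhausen additivity.

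For the reduction in the coefficient ring $T$, the paper does not use a Milnor $\varprojlim^{1}$ argument. Following \cite[Proposition 2.1.3]{FK}, it instead quotients each finite $T/I$ by its Jacobson radical to reduce to the semisimple case, and from there to finite-field coefficients. Your $\varprojlim^{1}$ reduction is a different mechanism and would need its own justification at the level of $K$-theory spaces.

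In short, your proposal is a plausible alternative heuristic---arguably more in line with the $(\varphi,\Gamma)$-module viewpoint developed elsewhere in the paper---but it is not the route the paper's own remark takes, which is simply the $\ell$-adic argument transported verbatim with \cite{KL} supplying the requisite $p$-adic finiteness input and \cite{FK} handling general $T$.
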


\begin{remark}	

The proof should be actually the same as in the previous result in the corresponding $\ell$-adic situation, the difference here is that one uses the corresponding finiteness and cohomological dimension results in the $p$-adic setting by applying \cite[8.1,10.1]{KL} for instance, which is to say in our setting the corresponding finiteness and cohomological dimension results over the point attached to $\mathbb{Q}_p^\mathrm{ur}$ would produce us the corresponding cofibration sequence involved. Also we need to consider a general ring $T$ here. Following the idea in \cite[Proposition 2.1.3]{FK}, we quotient out the corresponding Jacobson radical of each finite quotient $T/I$ which is a finite ring. We can then reduce to semi-simple case, and then reduce to finite field case. 
\end{remark}

\begin{conjecture}
When $T$ is as in the previous proposition the statement in the previous corollary holds as well in our current situation, which is to say over $\mathrm{Spa}(\mathbb{Q}_p,\mathfrak{o}_{\mathbb{Q}_p})$ we have the Waldhausen category $\mathbb{D}_{\mathrm{perf}}(X_\sharp,T)$ for $\sharp=\text{\'et}$ with the corresponding Waldhausen exact functor $R\Gamma_?(X_\sharp,.)$ for $?=\emptyset$:
\[
\xymatrix@R+0pc@C+3pc{
\mathbb{D}_{\mathrm{perf}}(X_\sharp,T)\ar[r]^{R\Gamma_?(X_\sharp,.)}\ar[r]\ar[r] &\mathbb{D}_{\mathrm{perf}}(T)
}
\]
which induces the corresponding the morphism $\mathbb{K}R\Gamma_?(X_\sharp,.)$ as:
\[
\xymatrix@R+0pc@C+3pc{
\mathbb{K}\mathbb{D}_{\mathrm{perf}}(X_\sharp,T)\ar[r]^{\mathbb{K}R\Gamma_?(X_\sharp,.)}\ar[r]\ar[r] & \mathbb{K}\mathbb{D}_{\mathrm{perf}}(T).
}
\]	
And we have that this map is homotopic to the zero map:
\[
\xymatrix@R+0pc@C+3pc{
\mathbb{K}\mathbb{D}_{\mathrm{perf}}(X_\sharp,T)\ar[r]^{0}\ar[r]\ar[r] & \mathbb{K}\mathbb{D}_{\mathrm{perf}}(T).
}
\]
\end{conjecture}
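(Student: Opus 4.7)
The plan is to mirror the proof of the preceding $\ell$-adic corollary for a point, with the crucial input of $\ell$-adic cohomological dimension one for $\mathrm{Spa}(\mathbb{Q}_p^\mathrm{ur}, \mathfrak{o}_{\mathbb{Q}_p^\mathrm{ur}})$ replaced by the $p$-adic finiteness and cohomological dimension results of Kedlaya--Liu \cite[8.1,10.1]{KL}. Since the conjecture concerns the single point $\mathrm{Spa}(\mathbb{Q}_p,\mathfrak{o}_{\mathbb{Q}_p})$, the structural complications of positive-dimensional bases are absent, and the two serious issues are the coefficient-ring reduction and the verification that continuous Galois cohomology of $\mathbb{Q}_p^\mathrm{ur}$ with finite $p$-primary coefficients behaves well enough to produce the required cofibration sequence inside the Waldhausen category.

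First I would reduce from a general adic $\mathbb{Z}_p$-algebra $T$ as in the hypothesis to the case where $T$ is a finite field of characteristic $p$. The reduction proceeds in the standard two-step process sketched in the remark before the preceding conjecture: each quotient $T/I^n$ is finite of $p$-power order, one quotients out its Jacobson radical to obtain a semisimple ring over a finite field of characteristic $p$, and then one reduces further via Morita equivalence to the case $T=\mathbb{F}_q$. On the $K$-theoretic level the associated short exact sequences of coefficient rings together with Waldhausen additivity and devissage (after the pattern of Fukaya--Kato \cite[Proposition 2.1.3]{FK}) yield compatible decompositions of $\mathbb{K}R\Gamma_?(X_\sharp, \cdot)$, so that verification of null-homotopy with finite-field coefficients is enough.

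Second, I would carry out the Frobenius-pullback argument of the preceding $\ell$-adic proposition verbatim. One factors the structure morphism through the unramified cover $f : \mathrm{Spa}(\mathbb{Q}_p^\mathrm{ur}, \mathfrak{o}_{\mathbb{Q}_p^\mathrm{ur}}) \to \mathrm{Spa}(\mathbb{Q}_p, \mathfrak{o}_{\mathbb{Q}_p})$ and sets
\[
R\Gamma(X_{\mathbb{Q}_p^\mathrm{ur}, \sharp}, M^\bullet) := \Gamma\bigl(\mathrm{Spa}(\mathbb{Q}_p^\mathrm{ur}, \mathfrak{o}_{\mathbb{Q}_p^\mathrm{ur}}), f^* R\pi_* M^\bullet \bigr).
\]
The topological generator $\mathrm{Fr}$ of $\mathrm{Gal}(\mathbb{Q}_p^\mathrm{ur}/\mathbb{Q}_p)$ induces an endomorphism of this object, and, after replacing the relevant complexes by sufficiently flabby (and dg-flat) models so that one is working on the nose within the Waldhausen category rather than just in its homotopy category, the continuous Hochschild--Serre exact sequence promotes to a cofibration sequence
\[
R\Gamma_?(X_\sharp, M^\bullet) \longrightarrow R\Gamma_?(X_{\mathbb{Q}_p^\mathrm{ur}, \sharp}, M^\bullet) \xrightarrow{1-\mathrm{Fr}} R\Gamma_?(X_{\mathbb{Q}_p^\mathrm{ur}, \sharp}, M^\bullet).
\]
Waldhausen's additive theorem applied to the identity and Frobenius maps on the common object then produces the homotopy
\[
\mathbb{K}R\Gamma_?(X_{\mathbb{Q}_p^\mathrm{ur}, \sharp}, \cdot) \oplus \mathbb{K}R\Gamma_?(X_\sharp, \cdot) \simeq \mathbb{K}R\Gamma_?(X_{\mathbb{Q}_p^\mathrm{ur}, \sharp}, \cdot),
\]
from which $\pi_i \mathbb{K}R\Gamma_?(X_\sharp, \cdot) = 0$ for all $i$, i.e. the desired null-homotopy.

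The main obstacle I anticipate is ensuring that $R\Gamma(X_{\mathbb{Q}_p^\mathrm{ur}, \sharp}, M^\bullet)$ actually lies in $\mathbb{D}_\mathrm{perf}(T)$ in the $p$-adic setting, so that all three terms of the cofibration sequence live in a common Waldhausen category and $\mathrm{Fr}$ really is a self-map there. In the $\ell$-coprime setting this is automatic from $\mathrm{cd}_\ell(\mathbb{Q}_p^\mathrm{ur}) \leq 1$; in the $p$-adic case the analogous finiteness and cohomological bound for the $(\varphi,\Gamma)$-module-theoretic Galois cohomology come from Kedlaya--Liu \cite[8.1,10.1]{KL}, and the delicate task is to lift the derived-category statement there to a genuine strictly-perfect (or at least dg-flat) model compatible both with the Frobenius endomorphism and with the transition maps $\psi_{I,J}$ of the inverse system indexed by $\mathfrak{J}$. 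Once this finiteness lift is in place, the rest of the argument is formal and the $K$-theoretic null-homotopy follows as above.
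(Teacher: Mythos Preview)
Your proposal aligns exactly with the strategy the paper itself sketches, but note that the paper does \emph{not} prove this statement: it is recorded as a \emph{Conjecture}, not a proposition. The only guidance the paper offers is the Remark attached to the preceding (more general) conjecture, which says in effect: mimic the $\ell$-adic Frobenius-pullback argument, replace the $\ell$-adic cohomological dimension input by the $p$-adic finiteness results of Kedlaya--Liu \cite[8.1, 10.1]{KL}, and for general $T$ reduce via the Jacobson radical and semisimplicity to the finite-field case following \cite[Proposition 2.1.3]{FK}. Your two-step plan (coefficient reduction, then the $1-\mathrm{Fr}$ cofibration sequence over $\mathbb{Q}_p^{\mathrm{ur}}$ plus Waldhausen additivity) is precisely this sketch fleshed out.

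You have also correctly isolated the reason the paper leaves this as a conjecture rather than a theorem: the step that is not routine is showing that $R\Gamma(X_{\mathbb{Q}_p^{\mathrm{ur}},\sharp}, M^\bullet)$ genuinely lands in $\mathbb{D}_{\mathrm{perf}}(T)$ with a strict model carrying the Frobenius endomorphism and compatible with the transition maps of the inverse system over $\mathfrak{J}$. In the $\ell$-adic case this is handled by $\mathrm{cd}_\ell(\mathbb{Q}_p^{\mathrm{ur}})\leq 1$; in the $p$-adic case the cited results of \cite{KL} are derived-category statements, and lifting them to the Waldhausen level is exactly the missing ingredient. So your proposal is not so much a proof as a faithful elaboration of the paper's own heuristic, with the gap honestly flagged; until that finiteness lift is established, the statement remains conjectural as the paper presents it.
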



\begin{remark}
We used the corresponding context of Waldhausen categories after Witte, but certain cases of the corresponding $\epsilon$-isomorphism conjectures in the \'etale setting are due to many people who studied the corresponding $\epsilon$-isomorphism conjectures, mainly in \cite{FK}, \cite{F},\cite{BB}, \cite{LZ} and \cite{Nak3}.	
\end{remark}

\newpage

\subsection*{Acknowledgements} 
We would like to thank the philosophy proposed in \cite{KP1} and the paper itself which inspired us to initiate our project on the unification and the generalization of $p$-adic Hodge theory and the corresponding Iwasawa theory in the deep philosophy of Fukaya-Kato in the noncommutative setting in both Deligne's category of virtual objects and Waldhausen category (even more sophisticated $\infty$-categorical consideration). For the Iwasawa theory part, we are actually deeply inspired by not only the Burns-Flach-Fukaya-Kato-Nakamura's Iwasawa theoretic philosophy but also some deep $K$-theoretic point of view coming from Witte's geometric Iwasawa theory for constructible $\ell$-adic sheaves over a scheme. The author would like to thank Professor Kiran Kedlaya for helpful discussion, which deepens our understanding on \cite{KL15} and \cite{KL16} in some quite nontrivial way.

\newpage

\bibliographystyle{ams}

\end{document}